\documentclass[11pt]{article}

\usepackage{amsmath}
\usepackage{amssymb}
\usepackage{amsthm}
\usepackage{amsfonts}
\usepackage{amscd}
\usepackage{thmtools} 
\usepackage[numbers,sort&compress]{natbib}
\usepackage{hyperref}
\usepackage{cleveref} 
\usepackage[dvipsnames]{xcolor} 
\usepackage{geometry}
\usepackage{enumitem}
\usepackage[T1]{fontenc} 
\usepackage{tikz}
\usepackage[medium]{titlesec} 
\usepackage[font=it, labelfont={bf,it}]{caption} 
\usepackage[nottoc]{tocbibind} 
\usepackage{tocloft} 
\usepackage{appendix}

\declaretheorem[style=plain,name=Theorem]{theorem}
\declaretheorem[style=plain,name=Corollary]{corollary}
\declaretheorem[style=plain,name=Theorem]{theoremext}

\declaretheorem[style=plain,name=Lemma]{lemma}
\declaretheorem[style=plain,name=Proposition,sibling=lemma]{proposition}

\declaretheorem[style=definition,name=Remark,sibling=lemma]{remark}
\declaretheorem[style=definition,name=Example,sibling=lemma]{example}
\Crefname{theorem}{Theorem}{Theorems}
\Crefname{theoremext}{Theorem}{Theorems}
\Crefname{corollary}{Corollary}{Corollaries}
\Crefname{lemma}{Lemma}{Lemmas}
\Crefname{proposition}{Proposition}{Propositions}
\Crefname{definition}{Definition}{Definitions}
\Crefname{remark}{Remark}{Remarks}
\Crefname{example}{Example}{Examples}

\hypersetup{colorlinks,linkcolor=RoyalBlue,citecolor=PineGreen,urlcolor=RoyalBlue}
\newcommand{\myspace}{\setlength{\abovedisplayskip}{1mm}\setlength{\belowdisplayskip}{0mm}}
\titlespacing*{\section}{0pt}{0mm}{0mm}
\titlespacing*{\subsection}{0pt}{0mm}{0mm}
\usetikzlibrary{positioning} 
\usetikzlibrary{arrows.meta} 

\newenvironment{Mlist}{\begin{itemize}[topsep=0pt,itemsep=4pt,leftmargin=7mm]}{\end{itemize}}
\newenvironment{claims}{\begin{enumerate}[topsep=0pt,itemsep=0pt,leftmargin=8mm,label=\textnormal{\textbf{(\alph*)}},ref=(\alph*)]}{\end{enumerate}}

\newcommand{\fig}[3]{\includegraphics[height=#1cm, width=#2cm]{img/#3}}

\newcommand{\RL}[2]{\Cref{lem:#1}\ref{lem:#1:#2}}
\newcommand{\RLS}[4]{Lemmas~\ref{lem:#1}\ref{lem:#1:#2} and~\ref{lem:#3}\ref{lem:#3:#4}}

\newcommand{\C}{\mathbb{C}}
\newcommand{\R}{\mathbb{R}}

\renewcommand{\H}{\mathbb{H}}
\renewcommand{\P}{\mathbb{P}}
\renewcommand{\S}{\mathbb{S}}
\newcommand{\Z}{\mathbb{Z}}

\newcommand{\bC}{\mathbf{C}}
\newcommand{\bP}{\mathbf{P}}
\newcommand{\bR}{\mathbf{R}}
\newcommand{\bS}{\mathbf{S}}

\newcommand{\cN}{\mathcal{N}}
\newcommand{\cE}{\mathcal{E}}
\newcommand{\cL}{\mathcal{L}}
\newcommand{\cR}{\mathcal{R}}
\newcommand{\cU}{\mathcal{U}}
\newcommand{\cH}{\mathcal{H}}

\newcommand{\tA}{\tilde{A}}
\newcommand{\tB}{\tilde{B}}

\newcommand{\tQ}{\tilde{Q}}
\newcommand{\tS}{\tilde{S}}
\newcommand{\tT}{\tilde{T}}

\newcommand{\oA}{{\overline{A}}}
\newcommand{\oB}{{\overline{B}}}
\newcommand{\oC}{{\overline{C}}}
\newcommand{\oD}{{\overline{D}}}
\newcommand{\oL}{{\overline{L}}}
\newcommand{\oR}{{\overline{R}}}
\newcommand{\oh}{{\overline{h}}}
\newcommand{\op}{{\overline{p}}}
\newcommand{\oq}{{\overline{q}}}
\newcommand{\oalpha}{{\overline{\alpha}}}
\newcommand{\obeta}{{\overline{\beta}}}
\newcommand{\oa}{{\overline{a}}}
\newcommand{\ob}{{\overline{b}}}

\newcommand{\vc}{\vec{c}}
\newcommand{\vv}{\vec{v}}
\newcommand{\vw}{\vec{w}}

\newcommand{\HC}{\mathbb{H}_{\C}}
\newcommand{\qi}{\mathbf{i}}
\newcommand{\qj}{\mathbf{j}}
\newcommand{\qk}{\mathbf{k}}
\newcommand{\ii}{\mathfrak{i}}

\renewcommand{\c}{\colon}
\newcommand{\dto}{\dasharrow}
\newcommand{\aut}{\operatorname{Aut}}

\newcommand{\set}[2]{\left\{#1 ~|~ #2\right\}}
\newcommand{\Sing}{\operatorname{Sing}}
\newcommand{\df}{\emph}
\newcommand{\sgn}{\operatorname{sgn}}

\newcommand{\Wlog}{without loss of generality }
\newcommand{\resp}{respectively}
\newcommand{\st}{such that }

\renewcommand{\k}{k}

\renewcommand{\l}{\ell}
\newcommand{\p}{\varepsilon}
\newcommand{\bas}[1]{\langle #1\rangle}

\newcommand{\END}{\hfill $\vartriangleleft$}

\begin{document}
\myspace

\begin{center}
~\vspace{-10mm}\\
\LARGE
Bivariate quaternionic factorizations and surfaces that decompose into two circles
\\[5mm]\large
Johanna Frischauf, Niels Lubbes, Hans-Peter Schröcker
\\[2mm]\large
\today
\end{center}
~\vspace{-10mm}
\begin{abstract}
We present an algebraic and geometric condition
for bivariate quaternionic polynomials of arbitrary bidegree
to have a univariate linear left or right factor.
We apply this quaternionic factorization theorem to the bidegree (1,1) case
and recover a classical theorem of Clifford in elliptic geometry.
By applying to the bidegree (2,2) case,
we obtain decompositions into two circles
of celestial surfaces, namely surfaces in the 3-dimensional sphere
that contain two circles through a general point.
This results in an alternative proof and refinement for
a theorem by Skopenkov and Krasauskas from 2019,
which states that a non-quartic celestial surface is
Möbius equivalent to either the pointwise product of circles in the unit-quaternions,
or an inverse stereographic projection
of the pointwise sum of circles in Euclidean space.
Our proposed method extends this decomposition result to the quartic case
and we show that surfaces are, up to Möbius equivalence and stereographic projections,
not both a sum and product of circles.
\\[2mm]
{\bf Keywords:}
quaternions,
factorization of bivariate quaternionic polynomials,
\\Möbius geometry,
real rational surfaces, celestial surfaces
\\[2mm]
{\bf MSC2010:}
12D05, 
51B10, 
51M15, 
14J26 
\end{abstract}

\vspace{-3mm}
\begingroup
\def\addvspace#1{\vspace{-1.4mm}}
\tableofcontents
\endgroup
\vspace{2mm}

\section{Introduction}

In this article, we address the following two informal problems, which
will be made precise later in this introduction:
\begin{Mlist}
\item[1.]
\textit{Determine a necessary and sufficient algebraic condition for
bivariate quaternionic polynomials to admit
a univariate linear left or right factor.}
\item[2.]
\textit{Characterize surfaces in $\R^3$ that \df{decompose into two circles}, namely surfaces
that are M\"obius equivalent to the Zariski closure of either
the stereographic projection of a pointwise product of circles in the unit-quaternions~$S^3$,
or the pointwise sum of circles in~$\R^3$ (see \Cref{fig:decompose}).
}
\end{Mlist}
\begin{figure}[!ht]
\centering
\setlength{\tabcolsep}{1cm}
\begin{tabular}{cc}
\fig{2.8}{4}{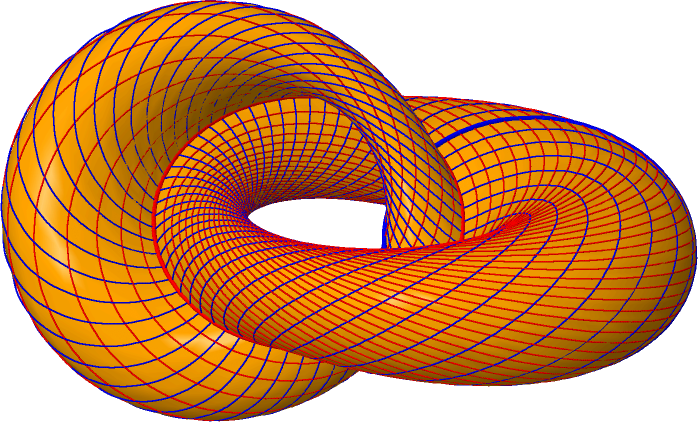}&
\fig{2.8}{4}{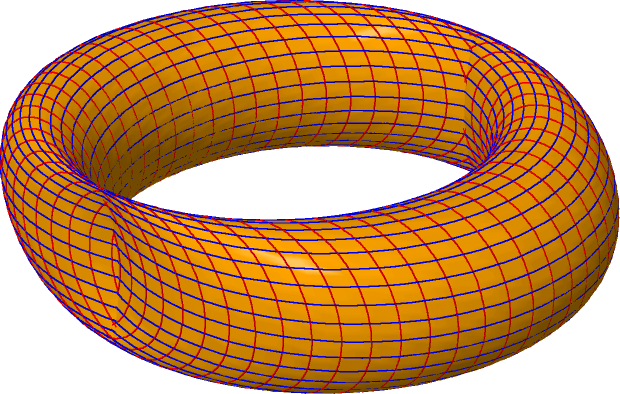}
\\
product of circles & sum of circles
\end{tabular}
\\
\caption{Examples of surfaces that decompose into two circles.}
\label{fig:decompose}
\end{figure}
The first problem is non-trivial, since multivariate quaternionic polynomials
are not commutative and do not form a unique factorization domain.
It was solved for the bidegree~$(*,1)$ case in 2019
by Skopenkov and Krasauskas \citep[Lemma~2.9]{2018sko}
and as an application they partially solved the second problem
(see \citep[Main~Theorem~1.1]{2018sko}):
{\it A surface in $\R^3$ that is covered by two pencils of circles either
\begin{Mlist}
\item decomposes into two circles, or
\item is the stereographic projection of a quartic surface in~$S^3$.
\end{Mlist}}
We remark that a surface that decomposes into two circles is covered by at least
two pencils of circles. Surfaces are assumed to be real, algebraic and Zariski closed, but not necessarily smooth.

In this article, we solve the first problem for bivariate quaternionic polynomials of arbitrary bidegree (see \Cref{thm:fct}).
A geometric interpretation of this factorization result is as follows:
if a quaternionic polynomial defines a regular parametrization of a surface in projective
3-space, then this surface intersected with some fixed quadratic surface of signature~$(4,0)$
contains a pair of complex conjugate lines for each univariate linear left/right factor
(see \Cref{cor:fct}).
When applied to quaternionic polynomials of
bidegree~$(1,1)$, we recover a classical result of Clifford in elliptic geometry
(see \Cref{cor:clifford}).

We solve the second problem by applying \Cref{cor:fct} to bidegree~$(2,2)$ polynomials
(see \Cref{thm:c} and \Cref{cor:c}):
{\it A surface in~$\R^3$ that is covered by two pencils of circles belongs up to M\"obius equivalence
to exactly one of the following types of surfaces in~$\R^3$:
\begin{Mlist}
\item a surface that decomposes into two circles,
\item a smooth cubic surface that is covered by exactly 2 or 6 pencils of circles, or
\item a quadratic surface that is neither a sphere nor a one-sheeted circular hyperboloid.
\end{Mlist}}
In addition to the first item, we show that a surface does not decompose both as a sum and as a product of two circles.
Notice that the last two items classify the non-decomposable surfaces among those covered
by two pencils of circles.
Since decomposable surfaces are necessarily covered by two pencils of circles,
we indeed solved the second problem.
\Cref{thm:c} recovers and refines \citep[Main~Theorem~1.1]{2018sko} with an alternative proof
and
confirms \citep[Conjecture~1]{2023trans} which was stated by a coauthor of the current article
(see \Cref{rmk:known} for more details).

Before we discuss our main results in \textsection\ref{sec:fct}, \textsection\ref{sec:clifford}
and \textsection\ref{sec:c} in more detail, we first present an overview for the state of the art.

\subsection{State of the art}
\label{sec:art}
Univariate quaternionic polynomials admit a non-unique factorization into linear factors \citep[Theorem~5.2]{2008}
and thus the first problem is solved for bidegrees $(*,0)$.
The factorizations of univariate dual quaternionic polynomials have been studied
in \cite{2021slice,2019} and are used in \cite{2018kempe} to construct mechanical linkages
that draw given rational space curves.

For bivariate quaternionic polynomials of bidegree~$(*,1)$,
a necessary and sufficient condition for the existence
of linear univariate factors was provided by~\citep[Lemma~2.9]{2018sko} (see also \citep[Corollary~3.2]{2022n1}).
Bivariate quaternionic polynomials whose norm polynomial
factors into univariate factors are studied in \cite{2022tech}.
\Cref{thm:fct} does not make assumptions on the norm
polynomial and applies to quaternionic polynomials of arbitrary bidegree.

Surfaces that are covered by two pencils of circles are called \df{celestial surfaces}
and are algebraic by \citep[Theorem~1]{2001}.
A non-quadratic celestial surface in the 3-dimensional unit-sphere
is of degree either 4 or~8
and contains at most~6 circles through a general point \cite{2021circ,1987,1980}.
Those of degree 4 are called \df{Darboux cyclides}
and have been investigated by
mathematicians such as
Dupin~(1822),
Villarceau~(1848),
Kummer~(1865),
Maxwell~(1868),
Casey~(1870),
Cayley~(1873),
Clifford~(1873) and
Darboux~(1880)
\citep[Introduction]{2019mor}.
Artists sculptured around 1580
approximations of non-trivial circles on a Darboux cyclide in a staircase near the Strasbourg Cathedral
(see \cite{2014rt}, \citep[page~101]{2010ber}),
and
\citep[Main~Theorem~1.1]{2018sko} shows
more than 400 years later
that a celestial surface is
either a Darboux cyclide or decomposes into two circles.
\Cref{thm:c} extends this result by also characterizing the decomposable Darboux cyclides.
Celestial surfaces underwent a recent revival in geometric modeling and discrete differential geometry
\cite{1995,2012ort,2014rin,2014bez,2019mor}.
Of particular interest are their applications in
kinematics~\cite{2018kin,2019boh,2020kin} and architecture~\cite{2011arc,2012web}.


From an algebro geometric point of view,
we consider rational maps of component degree~$(2,2)$ into a 3-dimensional projective hyperquadric.
\Cref{thm:fct} applies as such parametric maps translate into bivariate quaternionic polynomials.
A related result is the birational classification at \citep[Theorem~3]{2018kol} of
rational maps of component degree~$2$
from the projective plane into a 4-dimensional projective hyperquadric.
In case this hyperquadric is of signature~$(4,1)$, then \citep[Corollary~5.1]{2018sko}
characterizes such maps in terms of products of quaternionic polynomials.
We remark that \Cref{lem:coord} of our alternative proof applies to rational maps of component degree~$(2,2)$
into hyperquadrics of arbitrary dimension.

We now proceed by stating our main result in \textsection\ref{sec:fct} (\Cref{thm:fct}),
its geometric interpretations in \textsection\ref{sec:clifford} (Corollaries~\ref{cor:fct} and \ref{cor:clifford}),
and its application to celestial surfaces in \textsection\ref{sec:c} by specializing
to the bidegree~$(2,2)$ case (\Cref{thm:c}).
We conclude the introduction with an overview of the remaining article at \textsection\ref{sec:overview}.

\subsection{The factorization theorem}
\label{sec:fct}
The algebra of \df{quaternions} consists of a
vector space $\H:=\bas{1,\qi,\qj,\qk}_\R$ and a multiplication $\_\cdot\_\c \H\times \H\to \H$
that is defined by $\qi^2=\qj^2=\qk^2=\qi\cdot\qj\cdot\qk=-1$ with $1\in \H$ being the multiplicative unit.
The algebra of \df{complex quaternions} $\HC$ is defined as
the set of pairs $(a,b)\in\H^2$ with componentwise addition and the multiplication defined by
$(a,b)\cdot(p,q):=(a\cdot p-b\cdot q, a\cdot q + b\cdot p)$.
We identify $(0,1)\in\HC$ with the imaginary unit $\ii\in\C$
so that $\HC=\set{a+\ii\,b}{a,b\in\H}$. Notice that $\ii\neq\qi$.

The \df{conjugate} of a quaternion or complex quaternion~$h=h_1+h_2\,\qi+h_3\,\qj+h_4\,\qk$
is defined as $h^*:=h_1-h_2\,\qi-h_3\,\qj-h_4\,\qk$.
We remark that $h\cdot h^*=h_1^2+h_2^2+h_3^2+h_4^2$.
The \df{complex conjugate} of a complex quaternion $h=h_1+h_2\,\qi+h_3\,\qj+h_4\,\qk$
is defined as $\overline{h}:=\oh_1+\oh_2\,\qi+\oh_3\,\qj+\oh_4\,\qk$,
where the bar denotes conjugation of complex numbers.

We denote by $\HC[s,t]$ the bivariate polynomials with coefficients in $\HC$
and we assume that the coefficients commute with the variables~$s$ and~$t$.
The \df{conjugate}~$Q^*$ of the polynomial~$Q\in \HC[s,t]$ is defined as the polynomial in~$\HC[s,t]$
that is obtained by conjugating the quaternionic coefficients of~$Q$.

We write $P\mid Q$ for $P\in\C[s,t]$ and $Q\in \HC[s,t]$ if $Q=P\cdot\tQ$ for some $\tQ\in\HC[s,t]$.
It is not necessary to distinguish whether $P$ is a left or a right factor of~$Q$, as
$P\cdot\tQ=\tQ\cdot P$ by \RL{basic}{a}.
We call a polynomial $Q\in\HC[s,t]$ \df{primitive} if $P\nmid Q$ for all non-constant
polynomials $P\in\C[s,t]$. All definitions for polynomials in~$\HC[s,t]$ also apply
to polynomials in $\H[s,t]\subset \HC[s,t]$.

\begin{theorem}
\label{thm:fct}
Let $Q\in \H[s,t]$ be primitive. We write
$Q=Q_0+Q_1\cdot s+\cdots+Q_d\cdot s^d$
with $d\in\Z_{>0}$ and $Q_i\in\H[t]$ for all $i\in I:=\{0,\ldots,d\}$.
Let $P\in\R[t]$ be a monic quadratic polynomial.
\begin{claims}
\item\label{thm:fct:a}
$P\mid Q_i\cdot Q_j^*$ for all $i,j\in I$
if and only if
$Q=\tQ\cdot (t-h)$
for some $\tQ\in\H[s,t]$ and $h\in\H$
\st $P=(t-h)\cdot (t-h)^*$.

\item\label{thm:fct:b}
$P\mid Q_i^*\cdot Q_j$ for all $i,j\in I$
if and only if
$Q=(t-h)\cdot \tQ$
for some $\tQ\in\H[s,t]$ and $h\in\H$
\st $P=(t-h)\cdot (t-h)^*$.


\end{claims}
\end{theorem}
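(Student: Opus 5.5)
The plan is to prove the easy direction by direct computation, reduce the hard direction to an evaluation statement at a complex root of $P$, and deduce \ref{thm:fct:b} from \ref{thm:fct:a} by conjugation.

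\emph{Easy direction of \ref{thm:fct:a}.} Suppose $Q=\tQ\cdot(t-h)$. Writing $\tQ=\sum_i\tQ_i s^i$, we get $Q_i=\tQ_i\cdot(t-h)$. Hence
\[
Q_i\cdot Q_j^*=\tQ_i\cdot(t-h)\cdot(t-h)^*\cdot\tQ_j^* = P\cdot \tQ_i\cdot\tQ_j^*,
\]
because $P\in\R[t]$ is central.

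\emph{Part \ref{thm:fct:b}.} This follows from \ref{thm:fct:a} applied to $Q^*$. Indeed $(Q^*)_i=Q_i^*$, $(AB)^*=B^*A^*$, and $Q^*$ is primitive whenever $Q$ is. So I only treat the converse of \ref{thm:fct:a}.

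\emph{Converse of \ref{thm:fct:a}: setup.} Let $z\in\C$ be a root of $P$. Since $t$ is central, evaluation $t\mapsto z$ is a ring homomorphism $\H[t]\to\HC$. Therefore $Q_i(z)\cdot Q_j(z)^*=0$ for all $i,j$.

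First I exclude real $z$. If $z\in\R$, then $Q_i(z)\in\H$, and $Q_i(z)Q_i(z)^*=|Q_i(z)|^2=0$ forces $Q_i(z)=0$ for every $i$. Then $(t-z)\mid Q$, contradicting primitivity. So $z\notin\R$ and $P=(t-z)(t-\oz)$ with $\oz\ne z$.

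By the same primitivity argument, not all $Q_i(z)$ vanish. Otherwise every $Q_i$ vanishes at $z$ and $\oz$, so $P$ divides each $Q_i$: divide by $P$ and note that the linear remainder vanishes at two distinct points.

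\emph{Main step: choosing $h$.} Identify $\HC\cong M_2(\C)$. Under this identification $q\mapsto q^*$ becomes the adjugate and $qq^*=\det q$. Thus the nonzero $Q_i(z)$ are rank-one matrices $u_iv_i^T$, and $q^*$ is, up to the fixed symplectic form $J$, the matrix $Jv\,u^TJ^{T}$. The relation $Q_i(z)Q_j(z)^*=0$ then says $v_i^TJv_j=0$, i.e.\ all $v_i$ are parallel to a single $v$. I want $h\in\H$ with
\[
\operatorname{Re}h=\operatorname{Re}z,\qquad |h|^2=|z|^2,
\]
which is equivalent to $P=(t-h)(t-h)^*$, and such that $Q_i(z)\cdot(z-h)^*=0$ for all $i$.

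For such $h$, the element $z-h$ is null and hence rank one. The required condition is that its row vector is parallel to $v$. So the key claim is that the map
\[
h\mapsto\text{row space of } z-h,
\]
defined on the $2$-sphere of quaternionic roots of $P$, hits every point of $\P^1(\C)$. I expect this to be a bijection $S^2\to\P^1(\C)$, akin to the classical identification of a sphere of quaternionic roots with $\C\P^1$. I would verify it by a short explicit computation with $h=\operatorname{Re}z+\operatorname{Im}z\cdot u$, $u$ a unit imaginary quaternion. \textbf{This step is the main obstacle}; everything else is formal.

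\emph{Finishing.} Given such $h$, complex conjugation yields $Q_i(\oz)(\oz-h)^*=0$, since $Q_i$ and $h$ are real. Hence $Q_i\cdot(t-h)^*$ vanishes at $z$ and $\oz$. Dividing by the central monic $P$ shows
\[
Q_i\cdot(t-h)^*=R_i\cdot P=R_i\cdot(t-h)\cdot(t-h)^*.
\]
Since $\H[t]$ has no zero divisors, we may cancel $(t-h)^*$ and obtain $Q_i=R_i\cdot(t-h)$. Setting $\tQ=\sum_i R_i s^i$ gives $Q=\tQ\cdot(t-h)$.
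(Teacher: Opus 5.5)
Your proof is correct, and it takes a genuinely different route from the paper. The step you flagged as the main obstacle does hold, and it is a two-line check. Write $z=x+\ii y$ with $y\neq 0$ and $h=x+y\,u$ for a unit imaginary quaternion $u=u_1\qi+u_2\qj+u_3\qk$; then $z-h=y\,(\ii-u)$. Under $\qi\mapsto\operatorname{diag}(\ii,-\ii)$, $\qj\mapsto\left(\begin{smallmatrix}0&1\\-1&0\end{smallmatrix}\right)$, $\qk\mapsto\left(\begin{smallmatrix}0&\ii\\ \ii&0\end{smallmatrix}\right)$, the first row of $\ii-u$ is $\bigl(\ii(1-u_1),\,-(u_2+\ii u_3)\bigr)$. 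So the row space of $z-h$ is $[\,1-u_1:\ii(u_2+\ii u_3)\,]\in\P^1(\C)$. Up to the factor $\ii$, this is the stereographic projection of $u\in S^2$ from $\qi$, and $u=\qi$ gives $[0:1]$. Hence $h\mapsto$ row space is a bijection from the sphere of quaternionic roots of $P$ onto $\P^1(\C)$, exactly as you expected.

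The paper never leaves $\H[t]$. It rules out reducible $P$ using the univariate factorization \RL{uni}{a}. It then picks $i$ with $P\nmid Q_i$ and extracts a right factor $t-h$ of $Q_i$ from \RL{uni}{a}. For each $j$ with $P\nmid Q_j$, it applies the AB-Lemma (proved via the abc-Lemma on null quaternions) to $Q_i\cdot Q_j^*$ to get a common right factor $t-a$, and uses \RL{uni}{b} to force $a=h$.

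You instead evaluate at a root $z$ of $P$ and use $\HC\cong M_2(\C)$ to turn the hypothesis into: all $Q_i(z)$ have rank at most one and share a common row space. This is precisely the geometric condition of \Cref{cor:fct}, that the points $[Q_i(z)]$ lie on one left generator. Surjectivity of the sphere-to-$\P^1(\C)$ map then produces a single $h$ that works for all $i$ at once. So you need neither the univariate factorization theorem, nor the AB- and abc-Lemmas, nor the uniqueness statement \RL{uni}{b}.

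Your version is shorter and more transparent. It handles reducible $P$ by a one-line evaluation at a real root, and it never has to normalize $Q_i$ to a primitive monic polynomial before factoring it. The paper's version, in exchange, stays within quaternionic polynomial arithmetic without choosing a matrix model. It also builds the abc-Lemma, which it reuses in \Cref{lem:ST}, and it proves the algebraic statement first so that \Cref{cor:fct} is derived from it. Your argument runs the other way: it essentially proves the geometric criterion and reads off the factorization.
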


\begin{remark}
\label{rmk:fct}
The following assertion follows from the proof of \Cref{thm:fct}
and can be used to weaken the assumptions for the $\Rightarrow$ direction of \Cref{thm:fct}\ref{thm:fct:a}:
\textit{If $P\nmid Q_i$ for some $i\in I$ and $P\mid Q_i\cdot Q_j^*$ for all~$j\in I$,
then $P\mid Q_i\cdot Q_j^*$ for all~$i,j\in I$.}
A similar assertion can be used to weaken the $\Rightarrow$ direction of \Cref{thm:fct}\ref{thm:fct:b}:
\textit{If $P\nmid Q_i$ for some $i\in I$ and $P\mid Q_i^*\cdot Q_j$ for all~$j\in I$,
then $P\mid Q_i^*\cdot Q_j$ for all~$i,j\in I$.}
\END
\end{remark}

We illustrate \Cref{thm:fct} with examples of bidegrees~$(1,1)$ and~$(2,2)$.

\begin{example}
\label{exm:fct1}
Let $Q=Q_1\cdot s+Q_0$ in $\H[s,t]$ be of bidegree~$(1,1)$,
where
\[
Q_1:=(\qi-\qk)\cdot t - (1+\qi-\qj-\qk)
\quad\text{and}\quad
Q_0:=-(1+\qi+\qj-\qk)\cdot t+2\,(1+\qi).
\]
Let $P:=t^2-2\,t+2$. We find that $Q_0\cdot Q_0^*=2\cdot Q_1\cdot Q_1^*=4\cdot P$,
\[
Q_0\cdot Q_1^*=-2\cdot P\cdot (\qk+1)
\quad\text{and}\quad
Q_1\cdot Q_0^*=2\cdot P\cdot (\qk-1).
\]
\Cref{thm:fct}\ref{thm:fct:a} predicts that $Q$ has a right factor $t-h$ for some $h\in\H$
\st $P=(t-h)\cdot(t-h)^*$.
Indeed,
\[
Q=(s-1-\qk)\cdot(\qi-\qk)\cdot(t-1-\qk)
\]
and $h:=1+\qk$.
\END
\end{example}

\begin{example}
\label{exm:fct2}
Let $Q:=Q_2\cdot s^2+Q_1\cdot s+Q_0$ in $\H[s,t]$ be of bidegree (2,2), where
\begin{align*}
Q_2:=&\ 2\,\qk\cdot t^2 + 2\,(\qj - 2\,\qk - 1)\cdot t + 2\, (1 - \qi - \qj + \qk),\\
Q_1:=&\ 2\,(\qj - 2\,\qk - 1)\cdot t^2 + 4\,(2-2\,\qj +\qk)\cdot t+4\,(\qi + \qj - 2), \\
Q_0:=&\ 2\,(1+\qi - \qj + \qk)\cdot t^2 +4\,(\qj - \qi - 2)\cdot t + 8.
\end{align*}
Let $P:=t^2-2\,t+2$. We find that $P\nmid Q_2$,
\[
Q_2\cdot Q_0^*=4\cdot P^2\cdot(1 -\qi - \qj + \qk),\quad
Q_2\cdot Q_1^*=4\cdot P^2\cdot(\qi - \qk - 2),\quad
Q_2\cdot Q_2^*=4\cdot P^2,
\]
and thus $P\mid Q_i\cdot Q_j^*$ for all $i,j\in\{0,1,2\}$ by \Cref{rmk:fct}.
\Cref{thm:fct}\ref{thm:fct:a} predicts that $Q$ has a right factor $t-h$ for some $h\in\H$
\st $P=(t-h)\cdot(t-h)^*$.
In this example, $h:=1-\qi$ and
\[
Q=2\,\qk\cdot(s-1+\qi)\cdot(s-1+\qk)\cdot(t-1+\qk)\cdot(t-1+\qi),
\]
and thus $Q$ even admits a factorization with linear factors.
\END
\end{example}

\subsection{Geometric corollaries}
\label{sec:clifford}

If $S\subset\C^4\cong\HC$ is a subset,
then we denote by $\P(S)$ the Zariski closure of its projectivization in
the \df{complex projective space}~$\P^3$.
If $Q\in\HC[s,t]$ is non-zero, then we define
\[
[Q]:=\P(\set{Q(s,t)\in\HC}{s,t\in\C}).
\]
Notice that if $q,r\in \HC$ correspond to non-zero vectors in~$\C^4$ \st their span
is a line through the origin,
then $[q]=[r]$.
The \df{null quadric} in $\P^3$ is defined as
\[
\cN:=\P(\set{h\in\HC}{h\cdot h^*=0})=\set{x\in\P^3}{x_0^2+x_1^2+x_2^2+x_3^2=0}.
\]
If $\alpha\in\HC$ such that $\alpha\cdot\alpha^*=0$, then
\[
\cL_\alpha:=\P(\set{h\in\HC}{h\cdot\alpha=0})
\qquad\text{and}\qquad
\cR_\alpha:=\P(\set{h\in\HC}{\alpha\cdot h=0}),
\]
are called a \df{left generator} and a \df{right generator}, respectively.
They are the two complex lines in~$\cN$ that pass through the complex point~$[\alpha^*]\in\cN$
(see \RL{N}{a}).

The following corollary is the geometric version of \Cref{thm:fct}.

\begin{corollary}
\label{cor:fct}
Let $Q\in\H[s,t]$ be primitive and $p\in\C$.
\begin{claims}
\item\label{cor:fct:a}
$[Q(s,p)]\subseteq\cL_\alpha$
and
$[Q(s,\op)]\subseteq\cL_\oalpha$
for some $[\alpha^*]\in\cN$
if and only if
$Q=\tQ\cdot (t-h)$ for some $\tQ\in\H[s,t]$ and $h\in\H$
\st $(p-h)\cdot (p-h)^*=0$.

\item\label{cor:fct:b}
$[Q(s,p)]\subseteq\cR_\alpha$
and
$[Q(s,\op)]\subseteq\cR_\oalpha$
for some $[\alpha^*]\in\cN$
if and only if
$Q=(t-h)\cdot\tQ$ for some $\tQ\in\H[s,t]$ and $h\in\H$,
where $(p-h)\cdot (p-h)^*=0$.

\end{claims}
\end{corollary}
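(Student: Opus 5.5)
The plan is to reduce \Cref{cor:fct}\ref{cor:fct:a} to \Cref{thm:fct}\ref{thm:fct:a} with $P:=(t-p)(t-\op)\in\R[t]$. Part~\ref{cor:fct:b} then follows in the same way, with left and right exchanged and \Cref{thm:fct}\ref{thm:fct:b} used instead.

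Write $Q=\sum_i Q_i\, s^i$. Since $p\in\C$ is central in $\HC$, evaluation at $t=p$ is a ring homomorphism $\HC[s,t]\to\HC[s]$. The condition $[Q(s,p)]\subseteq\cL_\alpha$ is therefore equivalent to $Q_i(p)\cdot\alpha=0$ for all $i\in I$.

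\textbf{Real $p$.} This case is degenerate and I will check that both sides of the equivalence fail. A non-zero real quaternion is invertible, so if each $Q_i(p)\in\H$ annihilates $\alpha\neq 0$, then $Q_i(p)=0$ for all $i$. Then $(t-p)\mid Q$, which contradicts primitivity. On the other side, $(p-h)(p-h)^*=0$ with $p\in\R$ and $h\in\H$ forces $h=p$, so again $(t-p)\mid Q$. From now on assume $p\notin\R$, so that $P$ is a monic real quadratic with roots $p$ and $\op$.

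\textbf{Direction $\Rightarrow$.} The key algebraic step, and the one I expect to need the most care, is the following claim: if $\alpha\alpha^*=0$ and $\alpha\neq0$, then every $h$ with $h\alpha=0$ can be written as $h=x\,\alpha^*$ for some $x\in\HC$.
\begin{itemize}
\item Since $\alpha^*\alpha=\alpha\alpha^*=0$, the set $\{x\alpha^*\}$ is contained in the left annihilator of $\alpha$.
\item Both spaces are $2$-dimensional. The left annihilator is $2$-dimensional because $\cL_\alpha$ is a line by \RL{N}{a}. The set $\{x\alpha^*\}$ is $2$-dimensional because, under $\HC\cong M_2(\C)$, $\alpha^*$ is a rank-one matrix.
\item Hence the two spaces coincide.
\end{itemize}
Using the claim, write $Q_i(p)=x_i\alpha^*$ and $Q_j(p)=x_j\alpha^*$. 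Then
\[
Q_i(p)\,Q_j(p)^* \;=\; x_i\,\alpha^*\alpha\, x_j^* \;=\; 0 .
\]
The same argument with $\oalpha$ gives $Q_i(\op)Q_j(\op)^*=0$. Each real coordinate of $Q_iQ_j^*\in\H[t]$ is a real polynomial vanishing at $p$ and $\op$, so it is divisible by $P$. Hence $P\mid Q_iQ_j^*$ for all $i,j\in I$. \Cref{thm:fct}\ref{thm:fct:a} now gives $Q=\tQ\cdot(t-h)$ with $P=(t-h)(t-h)^*$. Evaluating at $t=p$, which is legitimate because $p$ is central, yields $(p-h)(p-h)^*=P(p)=0$.

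\textbf{Direction $\Leftarrow$.} Put $\alpha:=(p-h)^*$.
\begin{itemize}
\item By hypothesis $\alpha\alpha^*=(p-h)^*(p-h)=(p-h)(p-h)^*=0$, where the middle equality holds because the norm is central.
\item Moreover $\alpha\neq0$, since $p\notin\R$.
\item We get $Q(s,p)\,\alpha=\tQ(s,p)\,(p-h)(p-h)^*=0$, so $[Q(s,p)]\subseteq\cL_\alpha$.
\item Because $Q$ and $h$ have real quaternion coefficients, complex conjugation gives $\oalpha=(\op-h)^*$ and $Q(s,\op)\,\oalpha=0$, so $[Q(s,\op)]\subseteq\cL_\oalpha$.
\end{itemize}

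\textbf{Part~\ref{cor:fct:b}.} Here $\alpha\,Q_i(p)=0$ for all $i$. The analogous claim is that every $h$ with $\alpha h=0$ can be written as $h=\alpha^* y$. This gives $Q_i(p)^*Q_j(p)=y_i^*\,\alpha\alpha^*\,y_j=0$, and the rest is identical, invoking \Cref{thm:fct}\ref{thm:fct:b}.
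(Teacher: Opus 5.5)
Your proof is correct and follows the paper's route. In the $\Rightarrow$ direction both arguments show $Q_i(p)\cdot Q_j(p)^*=0$, deduce $P\mid Q_i\cdot Q_j^*$ for $P=(t-p)(t-\op)$, and invoke \Cref{thm:fct}; you replace \Cref{lem:plane} and \RL{N}{b} by a coefficient comparison in $s$ and the identity $\set{h\in\HC}{h\cdot\alpha=0}=\HC\cdot\alpha^*$, which is already implicit in the paper's proof of \RL{N}{a} ($V_\beta=\HC\cdot\beta\subseteq L_\alpha$ with $\dim V_\beta\geq 2$). The remaining differences are minor: your $\Leftarrow$ direction evaluates the factorization directly with $\alpha:=(p-h)^*$ instead of going through \Cref{thm:fct}, \RL{N}{c} and \RL{plane}{b}, which is shorter and names the generator explicitly, and you treat real $p$ separately, a case the paper leaves to the irreducibility step inside the proof of \Cref{thm:fct}.
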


\begin{remark}
The assumption for the $\Rightarrow$ direction of \Cref{cor:fct}\ref{cor:fct:a}
is equivalent to:
\textit{$[Q(s,p)]$ is contained in a left generator for some $p \in \C$.}
If $[Q(s,p)]$ is not a complex point, then the right factor $t-h$
corresponds to two complex conjugate left generators in $[Q]\cap\cN$.
The situation for \Cref{cor:fct}\ref{cor:fct:b} is analogous.
\END
\end{remark}

\begin{corollary}
\label{cor:clifford}
If $Q\in\H[s,t]$ is primitive and of bidegree~$(1,1)$,
then the following are equivalent:
\begin{enumerate}[topsep=0pt,itemsep=2pt,label=(\roman*),ref=(\roman*)]
\item\label{i}
$[Q(s,p)], [Q(q,t)]\subset\cN$ are contained in complex lines for some $p,q\in\C\setminus\R$.
\item\label{ii}
$Q\in\{S\cdot T,T\cdot S\}$ for some linear polynomials~$S\in\H[s]$ and~$T\in\H[t]$.
\item\label{iii}
$Q\cdot Q^*=A\cdot B$ for some quadratic polynomials~$A\in\R[s]$ and $B\in\R[t]$.
\end{enumerate}
\end{corollary}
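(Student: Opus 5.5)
We prove the cycle of implications \ref{ii}$\Rightarrow$\ref{iii}$\Rightarrow$\ref{i}$\Rightarrow$\ref{ii}, with \Cref{cor:fct} doing the main work in the last step.

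\textbf{\ref{ii}$\Rightarrow$\ref{iii}.} Suppose $Q=S\cdot T$. Then $Q\cdot Q^*=S\cdot T\cdot T^*\cdot S^*$. Since $T\cdot T^*=:B\in\R[t]$ is real, it commutes with $S$, so $Q\cdot Q^*=A\cdot B$ with $A:=S\cdot S^*\in\R[s]$. The case $Q=T\cdot S$ is symmetric. Primitivity forces $S$ and $T$ to be genuinely linear, so $A$ and $B$ are quadratic.

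\textbf{\ref{iii}$\Rightarrow$\ref{i}.} We cannot have $B=(t-r)^2$ with $r$ real. Otherwise $Q(s,r)\cdot Q(s,r)^*=0$ would force the real quaternionic polynomial $Q(s,r)$ to vanish, so $(t-r)\mid Q$, contradicting primitivity. Hence $B$ has a non-real root $p$, and similarly $A$ has a non-real root $q$. Then $Q(s,p)\cdot Q(s,p)^*=0$ for all $s$, so $[Q(s,p)]\subset\cN$. Its Zariski closure is the image of a linear map in $s$, so it is a point or a line; if it is a point we take any line through it inside $\cN$. Thus $[Q(s,p)]$ is contained in a complex line in $\cN$, and likewise $[Q(q,t)]$.

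\textbf{\ref{i}$\Rightarrow$\ref{ii}.} Every line in $\cN$ is either a left or a right generator (\RL{N}{a}).
\begin{itemize}[topsep=0pt,itemsep=2pt]
\item Suppose $[Q(s,p)]\subseteq\cL_\alpha$. Since $Q$ has real coefficients, complex conjugation gives $[Q(s,\op)]\subseteq\cL_\oalpha$. \Cref{cor:fct}\ref{cor:fct:a} then yields $Q=\tQ\cdot(t-h)$. Comparing bidegrees, $\tQ\in\H[s]$ is linear, so $Q=S\cdot T$ with $S:=\tQ$ and $T:=t-h$.
\item If instead $[Q(s,p)]\subseteq\cR_\alpha$, \Cref{cor:fct}\ref{cor:fct:b} gives $Q=T\cdot S$ in the same way.
\end{itemize}
Only the hypothesis on $p$ is needed; the one on $q$ is superfluous, or it gives the symmetric factorization with roles of $s$ and $t$ exchanged.

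\textbf{Main obstacle.} The delicate point is in \ref{iii}$\Rightarrow$\ref{i}: ruling out real roots of $A$ and $B$, and handling the degenerate case where $[Q(s,p)]$ is a point. In the latter case we must check that some left or right generator still contains it, so that \Cref{cor:fct} applies. By \RL{N}{a}, every point of $\cN$ lies on both a left and a right generator, so this holds.
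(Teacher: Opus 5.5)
Your proof is correct and follows the paper's argument essentially step for step. It uses the same cycle (ii)$\Rightarrow$(iii)$\Rightarrow$(i)$\Rightarrow$(ii): (i)$\Rightarrow$(ii) comes from \RL{N}{a}, complex conjugation and \Cref{cor:fct}, using only the parameter $p$, and (iii)$\Rightarrow$(i) uses primitivity, or equivalently the absence of real points on $\cN$, to force non-real roots.

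Two small wording fixes:
\begin{itemize}
\item In (iii)$\Rightarrow$(i), your argument shows that $B$ (and likewise $A$) has no real root at all, not merely that $B\neq(t-r)^2$. The stronger statement is the one you need, since a priori $B$ could have two distinct real roots.
\item In (ii)$\Rightarrow$(iii), it is the bidegree $(1,1)$ together with \RL{basic}{e}, not primitivity, that forces $S$ and $T$ to have degree exactly one.
\end{itemize}
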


\begin{remark}
\Cref{cor:clifford} specializes \Cref{cor:fct} to bidegree~$(1,1)$
and recovers both the \citep[Splitting Lemma~2.6]{2018sko}
and
the classical theorem \citep[7.94]{1998} in elliptic geometry by William Kingdon Clifford (1845--1879~CE)
(see also \citep[p.241]{1968}).
\END
\end{remark}

\begin{example}
\label{exm:fct3}
Let us consider the bidegree~$(1,1)$ polynomial from \Cref{exm:fct1}:
\[
Q:=(s-1-\qk)\cdot(\qi-\qk)\cdot(t-1-\qk).
\]
Since $t-h$ with $h:=1+\qk$ is a right factor,
\Cref{cor:fct}\ref{cor:fct:a} predicts that
$[Q(s,p)]\subseteq\cL_\alpha$ for
some $p\in\C$ and $[\alpha^*]\in\cN$.
Moreover, $p$ is a root of
\[
(t-h)\cdot(t-h)^*=t^2 - 2\,t + 2=(t-1-\ii)\cdot(t-1+\ii)
\]
and thus $p=1+\ii$ without loss of generality.
Since $[Q(p,p)]=(-\ii:0:0:1)$ is contained in $\cL_\alpha$,
we may assume \Wlog that $[\alpha^*]=[Q(p,p)]$, which
implies that $\alpha=-\ii-\qk$.
\Cref{cor:fct}\ref{cor:fct:a} confirms that
\[
[Q(s,p)]=\set{(-s+\op:\ii\,s+\op:s-p:-\ii\,s+p)}{s\in\C}\subseteq\cL_\alpha.
\]
Similarly, \Cref{cor:fct}\ref{cor:fct:b} with $s$ and $t$ interchanged confirms that
\[
[Q(p,t)]=\set{(-t+\op:\ii\,t+\op:-t+p:-\ii\,t+p)}{t\in\C}\subseteq\cR_\alpha.
\]
In fact, $[Q(s,p)]=\cL_\alpha$, $[Q(p,t)]=\cR_\alpha$
so that $[Q(s,\op)]=\cL_\oalpha$ and $[Q(\op,t)]=\cR_\oalpha$.
In accordance with \Cref{cor:clifford}, we have
\[
[Q]\cap\cN=\cL_\alpha\cup\cL_\oalpha\cup\cR_\alpha\cup\cR_\oalpha
\]
and
$
Q\cdot Q^*=(s^2 - 2\,s + 2)\cdot (t^2 - 2\,t + 2).
$
\END
\end{example}

\begin{example}
\label{exm:fct4}
Let us consider the bidegree~$(2,2)$ polynomial from \Cref{exm:fct2}:
\[
Q:=2\,\qk\cdot(s+\qi-1)\cdot(s-1+\qk)\cdot(t-1+\qk)\cdot(t-1+\qi).
\]
We set $p:=1+\ii$ so that it is a root of $(t-h)\cdot (t-h)^*$, where $h:=1-\qi$.
We may assume \Wlog that $[\alpha^*]$ equals $[Q(p,p)]=(0:0:-\ii:1)$
and thus we set $\alpha:=(\ii)\,\qj-\qk$.
As predicted by \Cref{cor:fct}\ref{cor:fct:a}, we observe that the right factor~$t-h$ of $Q$ corresponds
to
\[
[Q(s,p)]=\set{
(
-\ii\,s-\op:
-s+p:
\ii\,s-p:
-s+\op
)
}{s\in\C}
\subseteq \cL_\alpha.
\]
In fact, we find that
$[Q(s,p)]=\cL_\alpha$
so that $[Q(s,\op)]=\cL_\oalpha$.
\END
\end{example}

\subsection{Application to celestial surfaces}
\label{sec:c}
An \df{inversion} with respect to a sphere~$S\subset \R^3$
with center~$c$ and radius~$r$ is the
map
\[
f\c \R^3\setminus\{c\}\to\R^3\setminus\{c\}
\]
such that $||x-c||\cdot ||f(x)-c||=r^2$ and the vectors $x-c$
and $f(x)-c$ are codirected for all $x\in \R^3\setminus\{c\}$.
Such a map exchanges the interior and exterior of~$S$
and sends a circle or line to a circle or line.
We call two surfaces in $\R^3$ \df{M\"obius equivalent} if
one surface is mapped to the other by a composition of inversions.

Let $\mu\c S^3\dto\R^3$ with $\mu(y):=(y_1,y_2,y_3)/(1-y_4)$
denote the \df{stereographic projection}
from the point $(0,0,0,1)$ on the \df{3-dimensional unit-sphere}~$S^3\subset\R^4$.

A surface~$Z\subset\R^3$ is called \df{$\lambda$-circled}
if the Zariski closure of $\mu^{-1}(Z)$
contains at least $\lambda\in \Z_{\geq 0}\cup\{\infty\}$ circles through a general point.
If $\lambda\in\Z_{\geq0}$, then we assume that $Z$ is not $(\lambda+1)$-circled.
If $\lambda\geq 2$, then we call $Z$ \df{celestial}.

A \df{Darboux cyclide} $Z\subset \R^3$
is a $\lambda$-circled surface \st
$\lambda\geq 1$ and $\deg\mu^{-1}(Z)=4$.
Smooth 4-circled, 5-circled and 6-circled Darboux cyclides
are called \df{ring cyclides}, \df{Perseus cyclides} and \df{Blum cyclides}, \resp.
A \df{CH1 cyclide} is a 3-circled Darboux cyclide that is M\"obius equivalent
to a one-sheeted circular hyperboloid.
See \Cref{fig:darboux} for examples and \cite{1980} for the origin of the names.
\begin{figure}[!ht]
\setlength{\tabcolsep}{3mm}
\begin{tabular}{cccc}
\fig{3}{3}{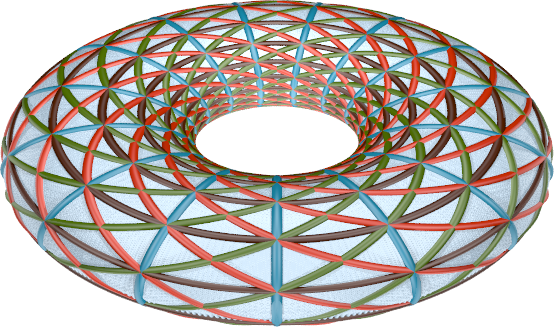} &
\fig{3}{3}{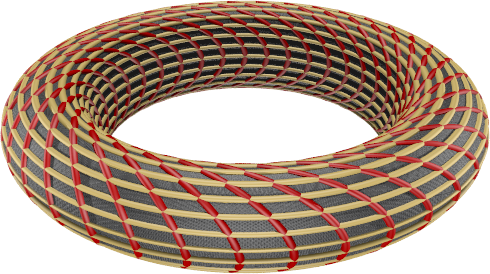}&
\fig{3}{3}{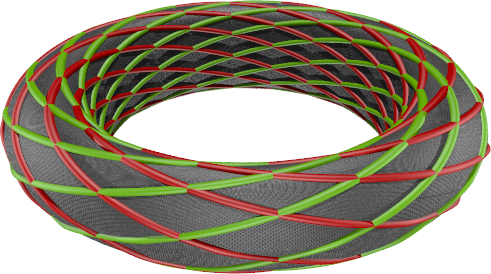}&
\fig{3}{3}{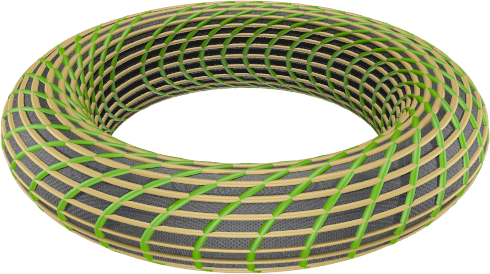}
\\
ring cyclide & & Perseus cyclide &
\\
\fig{3}{3}{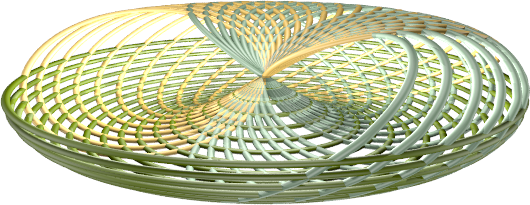} &
\fig{3}{3}{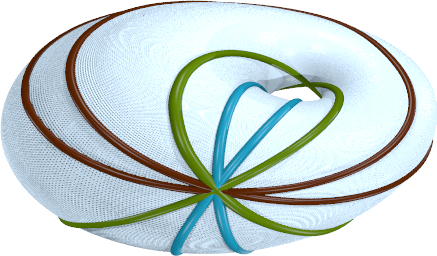} &
\fig{3}{3}{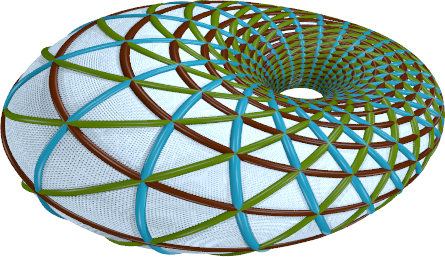} &
\fig{3}{3}{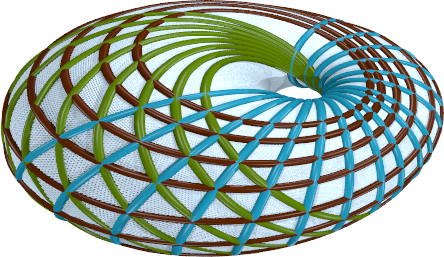}
\\
CH1 cyclide & & Blum cyclide &
\end{tabular}
\caption{Circles on four different types of Darboux cyclides.}
\label{fig:darboux}
\end{figure}

\begin{figure}[!hb]
\setlength{\tabcolsep}{1mm}
\begin{tabular}{ccccc}
\fig{2.8}{2.8}{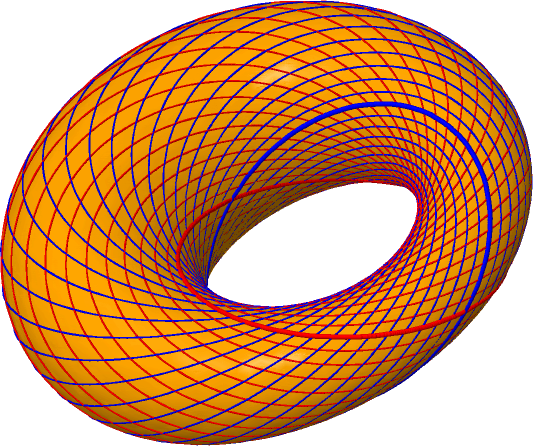} &
\fig{2.8}{2.8}{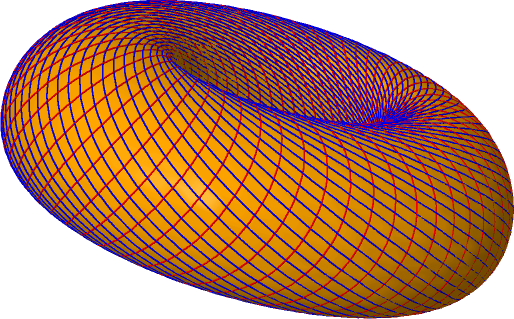} &
\fig{2.8}{2.8}{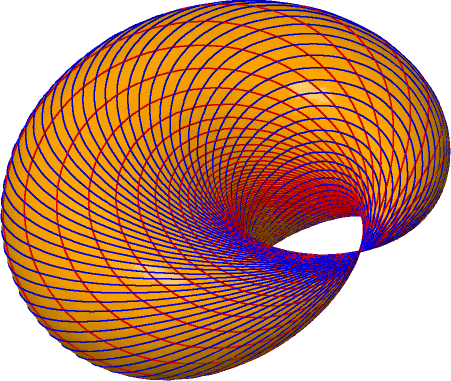} &
\fig{2.8}{2.8}{prod-8} &
\fig{2.8}{2.8}{sum-8}
\\
ring cyclide & Perseus cyclide & CH1 cyclide & $\deg(C\cdot D)=8$ & $A+B$
\end{tabular}
\caption{The first four surfaces are stereographic projections of $C\cdot D$
for some circles $C,D\subset S^3$.
The rightmost surface decomposes as $A+B$ for some circles $A,B\subset\R^3$.
An illustrated circle
corresponds to either
$\{c\}\cdot D$,
$C\cdot\{d\}$,
$\{a\}+B$, or
$A+\{b\}$,
for some
$a\in A$, $b\in B$, $c\in C$ and $d\in D$.
}
\label{fig:ABCD}
\end{figure}

Since $\R^4\cong\H$ as vector spaces, we may identify the unit-sphere $S^3\subset\R^4$ with
the \df{unit quaternions}~$\set{h\in\H}{h\cdot h^*=1}$.
If $A,B\subset \R^3$ and $C,D\subset S^3$, then
we denote by $A+B$ and $C\cdot D$
the Zariski closures of
\[
\set{a+b \in \R^3}{ a\in A \text{ and } b\in B}
\quad\text{and}\quad
\set{c\cdot d \in S^3}{ c\in C \text{ and } d\in D},
\]
\resp.
See \Cref{fig:ABCD} for examples.

The following result is an application of \Cref{cor:fct} to the bidegree (2,2) case.

\begin{theorem}
\label{thm:c}
Suppose that $Z\subset \R^3$ is a $\lambda$-circled surface such that $\lambda\geq 2$
and let $d$ denote the degree of the inverse stereographic projection~$\mu^{-1}(Z)$.
\begin{claims}
\item\label{thm:c:a}
We have $d\in \{2,4,8\}$ and
if $d$ equals 4 or 8, then $\lambda\leq 6$ and $\lambda=2$, \resp.
\item\label{thm:c:b}
If $d\neq 4$, then $Z$ is M\"obius equivalent to either $A+B$ or $\mu(C\cdot D)$
for some circles $A,B\subset \R^3$ and $C,D\subset S^3$.
\item\label{thm:c:c}
If $d\neq 8$,
then $Z$ is M\"obius equivalent to $\mu(C\cdot D)$
for some circles~$C,D\subset S^3$
if and only if
$Z$ is either a Perseus cyclide, ring cyclide or CH1 cyclide.
\item\label{thm:c:d}
If $Z=A+B$ for some circles $A,B\subset\R^3$,
then $d\neq 4$ and there do not exist circles $C,D\subset S^3$ \st
$Z$ is M\"obius equivalent to $\mu(C\cdot D)$.
\end{claims}
\end{theorem}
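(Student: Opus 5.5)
Throughout, I work on $X:=$ the Zariski closure of $\mu^{-1}(Z)\subset S^3$, viewed in $\P^4$ inside the projective closure of $S^3$, a hyperquadric of signature $(4,1)$. Inversions become projective automorphisms of this quadric, and circles become conics.

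\textbf{Setting up a parametrization.} Since $X$ contains two pencils of conics through a general point, I would first show that $X$ admits a birational parametrization of bidegree $(2,2)$ whose $s$- and $t$-curves are these circles. This is the content of the coordinate lemma (\Cref{lem:coord}), which I take from later in the paper. It should also yield \ref{thm:c:a}: the degree of the image of a bidegree $(2,2)$ map is at most $8$, the fibre degree is a power of~$2$, and projecting from a point gives $d\in\{2,4,8\}$. The bounds $\lambda\le 6$ for $d=4$ and $\lambda=2$ for $d=8$ I would quote from the known classification of celestial surfaces \cite{2021circ,1980}.

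\textbf{Passing to quaternions.} Next I move a general point of $X$ to $(0,0,0,1)$ or to a point off $S^3$, so that the conics lie in $S^3\cong$ unit quaternions, or in $\R^3$ after stereographic projection. There are two cases.
\begin{Mlist}
\item If some point of $X$ is a singular point, for instance the centre of the projection $\mu$ with $X$ passing through it with multiplicity such that all circles become lines, the translational case yields $A+B$.
\item Otherwise, I write the parametrization as $x=Q\cdot Q^*$-normalized, $Q\in\H[s,t]$ of bidegree $(2,2)$ (or $(1,1)$ after halving), with $Q\,\cdot\,$(real quantity) describing points of $S^3$.
\end{Mlist}

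\textbf{Factorization via \Cref{cor:fct}.} The circles through a general point meet the absolute conic, that is $\cN$ at infinity, in complex conjugate points. The family of these points traces curves in $[Q]\cap\cN$, and each such curve is a generator line $\cL_\alpha$ or $\cR_\alpha$. Applying \Cref{cor:fct} twice, once in $t$ and once in $s$ with the roles swapped, gives $Q=S\cdot T$ with $S\in\H[s]$ and $T\in\H[t]$ of degree $\le 2$ (or of degree $1$). Hence $X=C\cdot D$, which settles \ref{thm:c:b} for $d=8$, and also for $d=2$ via the analysis of quadrics.

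The \textbf{main obstacle} is verifying that the intersection $[Q]\cap\cN$ really contains the required generators for each variable separately, so that both factors split off. For degree $8$ this means showing that the isotropic structure forces $P\mid Q_iQ_j^*$. I would try first to compute $Q\cdot Q^*$, which should factor as $A(s)\,B(t)$ because the circles are the coordinate curves, and then use \Cref{rmk:fct}.

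\textbf{Parts \ref{thm:c:c} and \ref{thm:c:d}.} For \ref{thm:c:c}, $d=4$ products $C\cdot D$ arise when $C$ and $D$ are great circles or share a factor; I would compute $\lambda$ via the singular locus and match it to the Perseus, ring and CH1 cyclides. Conversely, for the Blum cyclides and the smooth cubic or quadric cases I would check that no pair of their pencils gives a factorizable $Q$. For \ref{thm:c:d}, $A+B$ passes through the point at infinity with its singular structure (a real double point), which I would show is incompatible with $C\cdot D$ up to Möbius transformations. This follows because $C\cdot D$ has no real singular points when it is not degenerate.
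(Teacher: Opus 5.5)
There is a genuine gap at the core of part~(b).

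\textbf{The parametrization and part (a).} \Cref{lem:coord} does not produce a parametrization. It starts from a given biquadratic map and only normalizes it by a complex M\"obius transformation, so that one parameter curve $(\pi\circ\nu\circ\varphi)(p,\cdot)$ involves only $y_0^2$ and $y_1^2$. The existence of a biquadratic birational \emph{morphism} is the external input \Cref{thm:A}\ref{thm:A:b}, and it holds only for $d=8$. For $d\in\{2,4\}$ every such map has base points, since a morphism forces $\deg X=8$. Your ``fibre degrees are powers of~2'' argument for $d\in\{2,4,8\}$ is not valid either, because base points lower the degree; the paper simply cites this result.

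\textbf{The generator condition.} More seriously, you never establish the generator condition that \Cref{cor:fct} needs. Your proposed route fails: $Q\cdot Q^*$ need not factor as $A(s)B(t)$ just because the coordinate curves are circles.
\begin{itemize}
\item $Q\cdot Q^*=Q_0^2$ holds for every parametrization into $\S^3$. Whether $Q_0$ splits depends on the M\"obius frame: a generic M\"obius transformation makes $Q_0$ irreducible while every coordinate curve stays a circle.
\item Even a split norm only gives $[Q(s,p)]\subset\cN$ for the roots $p$. \Cref{thm:fct} needs $P\mid Q_i\cdot Q_j^*$ for all $i,j$, i.e.\ that $[Q(s,p)]$ lies in a generator line.
\end{itemize}

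\textbf{The missing idea.} This is \Cref{prp:EU}. \Cref{lem:coord} and \Cref{lem:L} show that some left image and some right image are complex double lines. The four lines $L,\oL,R,\oR$ then lie in a real hyperplane section M\"obius equivalent to $\cE$, $\cU$ or $\cH$. The case $\cH$ is excluded by the incidences, and $\deg X=8$ forces equality.

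This $\cE$/$\cU$ dichotomy is what separates products from sums. Your singular-point criterion cannot do this job. Every degree-8 celestial surface contains these complex double lines. Products $C\cdot D$ can also have real double curves: for small circles near $1$ they are small perturbations of sums of circles in $\R^3$, which can have transversal real double curves. The sum case also needs its own argument: the additive splitting of $\pi\circ\varphi$ when projecting from the vertex of $\cU$ (\Cref{prp:UAB}).

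\textbf{Ordering of factors, and part (c).} Even in the $\cE$-case, applying \Cref{cor:fct} in both variables gives $Q=S\cdot E\cdot T$ with $S,T$ linear and $E$ of bidegree $(1,1)$, not $Q=S\cdot T$. \Cref{cor:clifford} leaves $E\in\{\tS\cdot\tT,\tT\cdot\tS\}$. The ordering $S\cdot\tT\cdot\tS\cdot T$ must be excluded; the paper does this via \RL{ST}{b}, which produces a base point and so contradicts that $\varphi$ is a morphism.

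The same issue is the heart of part~(c). For Perseus and CH1 cyclides, the $\cE$-decomposable map from \Cref{thm:B}\ref{thm:B:e} does have base points, and the reordering \RL{ST}{c} is needed. The implication from products to the three cyclide types, and the ring cyclide case, rest on the classification in \Cref{thm:B}. Your remarks about great circles and matching $\lambda$ do not replace this.

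\textbf{Part (d).} Your claim that a nondegenerate $C\cdot D$ has no real singular points is false, as shown above. You also do not address $d\neq 4$ for $A+B$ (\Cref{thm:B}\ref{thm:B:a}). The paper instead shows three things:
\begin{itemize}
\item products are $\cE$-decomposable (\Cref{prp:CDE});
\item sums are $\cU$-decomposable (\Cref{prp:ABU});
\item both cannot hold (\Cref{prp:UE}). Four complex singular lines off $\cU$ force $\deg\pi(X)\ge5$ by the plane-curve genus bound, while the projection centre has multiplicity at least~$4$, giving $\deg\pi(X)\le4$.
\end{itemize}
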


\begin{remark}
\label{rmk:known}
\Cref{thm:c}\ref{thm:c:a} also follows from \citep[Theorem~1]{2021circ}
and answers a question in \citep[\textsection5]{2012web}, but we included a short proof.
\Cref{thm:c}\ref{thm:c:b} is essentially equivalent to \citep[Main~Theorem~1.1]{2018sko},
but we present an alternative proof using \Cref{cor:fct}.
\Cref{thm:c}\ref{thm:c:c} confirms \citep[Conjecture~1]{2023trans},
which did not surrender using only intersection theory.
It follows from \citep[Theorem~1]{2001} that analytic surfaces that are covered
by two analytic families of circles are algebraic
and therefore we restrict ourselves to the algebraic setting
(see also \citep[\textsection4]{2018sko}).
The proof for Theorems~\ref{thm:c}\ref{thm:c:a} and~\ref{thm:c}\ref{thm:c:c} depend
on \cite{2001} and \cite{2023trans}, respectively (see Theorems~\ref{thm:A} and~\ref{thm:B}).
\END
\end{remark}

\begin{corollary}
\label{cor:c}
Suppose that $Z\subset\R^3$ is a $\lambda$-circled surface.
Then $\lambda\geq 2$ if and only if
$Z$ is M\"obius equivalent to exactly one of the following types of surfaces:
\begin{enumerate}[topsep=0pt,itemsep=2pt,label=(\roman*),ref=(\roman*)]
\item\label{c:i} $\mu(C\cdot D)$ for some circles $C,D\subset S^3$. In this case $\lambda\in\{2,3,4,5\}$.
\item\label{c:ii} $A+B$ for some circles $A,B\subset\R^3$. In this case $\lambda\in\{2,\infty\}$.
\item\label{c:iii} The cubic surface
$
\set{(x,y,z)\in\R^3}{z(x^2+y^2+z^2)+a\,x^2+b\,y^2+c\,z=0}
$
for some $a,b,c\in \R^3$ \st
$abc(a-b)(a^2+c)(b^2+c)\neq 0$.
This surface is smooth and $\lambda\in\{2,6\}$.

\item\label{c:iv} The quadratic surface
$
\set{(x,y,z)\in\R^3}{a\,x^2+b\,y^2+c\,z^2+d\,z+e=0}
$
for some $a\in\R_{>0}$, $b\in\R_{>0}\cup\{-1\}$ and $c,d,e\in\{-1,0,1\}$
\st either
\\
$(a-b)(b+1)c(d^2-1)e(c+e-2)\neq 0$ or
$(b+1)(c-1)(d^2-1)(e-1)(c^2e^2-1)\neq 0$ or
$(a-b)(c^2-1)(d^2+d)(e^2-1)\neq 0$.
In this case $\lambda\in\{2,3,4\}$.
\end{enumerate}
\end{corollary}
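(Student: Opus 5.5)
The plan is to derive \Cref{cor:c} from \Cref{thm:c} together with a classification of the Darboux cyclides and quadrics that are not of product type. Recall that $d=\deg\mu^{-1}(Z)$ lies in $\{2,4,8\}$ by \Cref{thm:c}\ref{thm:c:a}.

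For the ``if'' direction, I will show that each of the four types is covered by at least two pencils of circles. For types \ref{c:i} and \ref{c:ii} this is immediate: the families $\{c\}\cdot D$, $C\cdot\{d\}$ (respectively $\{a\}+B$, $A+\{b\}$) are two pencils of circles, and they pull back under $\mu^{-1}$ to circles in $S^3$. For type \ref{c:iii} I would exhibit two pencils of circles on the cubic explicitly, for instance by intersecting with a pencil of spheres through the circle at infinity. Alternatively, I would quote the classical fact (which enters through \Cref{thm:B}) that these are the smooth cubic Blum-type cyclides with $\lambda\in\{2,6\}$. For type \ref{c:iv}, the listed sign conditions should correspond exactly to quadrics carrying two real families of circles: ellipsoids, two-sheeted hyperboloids, elliptic paraboloids and cylinders, cones, and the non-circular one-sheeted hyperboloids. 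The sphere and the circular one-sheeted hyperboloid are excluded; note the factor $(a-b)$ in the first and third conditions. I would check this case by case using the known circular sections of quadrics.

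For the ``only if'' direction, assume $\lambda\ge2$ and split by $d$. If $d=8$, \Cref{thm:c}\ref{thm:c:b} gives type \ref{c:i} or \ref{c:ii}. If $d=2$, then $\mu^{-1}(Z)$ is a quadric section of $S^3$, so $Z$ is a sphere or a plane, which contradicts $\lambda$ being finite. The only alternative is $\lambda=\infty$, and then $Z$ is a sphere, which is Möbius equivalent to $A+B$ with $A,B$ orthogonal... I will adjust this: a plane is $A+B$ for coplanar circles only after Zariski closure, so I need to check that this degenerate case is meant to be covered by $\lambda=\infty$ in \ref{c:ii}.

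If $d=4$, \Cref{thm:c}\ref{thm:c:c} says that $Z$ is of type \ref{c:i} exactly when it is a ring, Perseus or CH1 cyclide. It remains to classify the remaining celestial Darboux cyclides up to Möbius transformation. Moving a singular point, or a suitable point of $\mu^{-1}(Z)$, to the projection center, the cyclide becomes either a cubic (projection center on the surface) or a quadric (projection center a singular point). I would use the normal forms of Darboux cyclides from \cite{2023trans} (\Cref{thm:B}) to reduce to the normal forms in \ref{c:iii} and \ref{c:iv}, and read off $\lambda$ from the cited classification.

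The ``exactly one'' claim comes from several separations. Types \ref{c:i} and \ref{c:ii} are disjoint by \Cref{thm:c}\ref{thm:c:d}. Type \ref{c:ii} has $d\ne4$, while types \ref{c:iii} and \ref{c:iv} have $d=4$, since a cubic or quadric not through the center lifts to a quartic; this separates \ref{c:ii} from \ref{c:iii} and \ref{c:iv}. Types \ref{c:iii} and \ref{c:iv} are separated from \ref{c:i} by \Cref{thm:c}\ref{thm:c:c}, once one checks that they are not ring, Perseus or CH1 cyclides: smooth cubics with $\lambda=6$ are Blum, and the excluded quadrics are precisely the CH1 ones. Types \ref{c:iii} and \ref{c:iv} are separated from each other by the Möbius-invariant singularity structure of $\mu^{-1}(Z)$: in case \ref{c:iv} the lift has a singular point at the projection center, while in case \ref{c:iii} it is smooth there.

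The main obstacle will be the bookkeeping in the $d=4$ case, namely verifying that the explicit parameter conditions in \ref{c:iii} and \ref{c:iv} match the normal forms and the values of $\lambda$. I would rely heavily on \Cref{thm:B} there.
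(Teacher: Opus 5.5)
Your skeleton matches the paper's.
\begin{itemize}
\item You split by $d\in\{2,4,8\}$ and settle $d=8$ with \Cref{thm:c}\ref{thm:c:b} and~\ref{thm:c:d}.
\item For $d=2$, the surface is M\"obius equivalent to a plane, the closure of a sum of coplanar circles, with $\lambda=\infty$. There is no contradiction and no sphere involved, so drop that detour.
\item For $d=4$, you project from a real smooth point to get a cubic, or from a real singular point to get a quadric. Quadrics are handled by the Euclidean classification, with CH1 removed via \Cref{thm:c}\ref{thm:c:c}.
\end{itemize}

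The genuine gap is the smooth case $d=4$ with $\lambda\in\{2,6\}$. \Cref{thm:B} contains no normal forms. Its only input here is \Cref{thm:B}\ref{thm:B:d}, which gives $\lambda\in\{2,4,5,6\}$, after which \Cref{thm:c}\ref{thm:c:c} handles $\lambda\in\{4,5\}$. What remains is a two-way statement:
\begin{itemize}
\item a smooth $2$- or $6$-circled cubic is M\"obius equivalent to a cubic $Z_\alpha$ defined by $z(x^2+y^2+z^2)+\alpha_1x^2+\alpha_2y^2+\alpha_3z=0$ with $\alpha_1\alpha_2\alpha_3(\alpha_1-\alpha_2)(\alpha_1^2+\alpha_3)(\alpha_2^2+\alpha_3)\neq0$;
\item conversely, every such $Z_\alpha$ is smooth with $\lambda\in\{2,6\}$.
\end{itemize}
This cannot be read off from anything you cite. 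It is a separate result, the paper's \Cref{lem:cubic}. Without it, nothing about type~\ref{c:iii} is established: not its membership in the list, not ``smooth'', not $\lambda\in\{2,6\}$. Hence your separations of~\ref{c:iii} from~\ref{c:i} and from~\ref{c:iv} also fail; the latter needs the lift to be smooth at every real point, not just at the projection center. Your fallbacks do not fill this. Every sphere contains the circle at infinity, so that phrase singles out no pencil, and a general sphere meets the cubic in a residual quartic curve rather than in circles. Also, the $\lambda=2$ members of~\ref{c:iii} are not Blum cyclides.

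The missing argument runs as follows.
\begin{itemize}
\item With $c:=\sqrt{-\alpha_3}$ one has $f_\alpha=z(z-c)(z+c)+(z+\alpha_1)x^2+(z+\alpha_2)y^2$. So the five planes $z\in\{0,c,-c,-\alpha_1,-\alpha_2\}$ cut $Z_\alpha$ in pairs of lines, e.g.\ $\alpha_1x^2+\alpha_2y^2=0$ in $z=0$.
\item The non-vanishing condition says precisely that these are ten distinct lines.
\item Each line lifts to a conic through the projection center. A quartic cyclide has at most ten complex conics through a general point, and exactly ten forces it to be smooth \citep{2021circ}. So the lift is smooth.
\item Then $\lambda$ is twice the number of these pairs consisting of real lines, and a sign analysis shows this is $2$ or $6$.
\item For the converse, the paper combines Takeuchi's three-parameter normal form \citep{2000} with the fact that the coefficients $\gamma^\alpha_i$ of these line pairs are M\"obius invariants with Zariski dense image. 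Hence the $Z_\alpha$ exhaust the M\"obius classes.
\end{itemize}

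A caution about your ``if'' direction for~\ref{c:iv}: the printed conditions do not correspond exactly to quadrics carrying two families of circles.
\begin{itemize}
\item $(a,b,c,d,e)=(2,1,1,0,-1)$ satisfies the first condition but gives the spheroid $2x^2+y^2+z^2=1$, whose only circles are the sections $x=\mathrm{const}$.
\item $c=d=e=0$ in the second condition gives $ax^2+by^2=0$, which is not a surface.
\end{itemize}
The paper's proof only argues from $\lambda\geq2$ to the normal form, via \citep[Table~11]{2021circ}. Your case-by-case check of the converse must therefore tacitly restrict to the rows of that table.
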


Notice that \Cref{cor:c} implies that a 5-circled surface in $S^3$ is
M\"obius equivalent to the pointwise quaternionic product of two circles.

\begin{example}
\label{exm:CH1}
Let $S\cdot\tS\cdot\tT\cdot T\in\H[s,t]$ be the polynomial from \Cref{exm:fct2}, where
\[
S:=\qk\cdot\sqrt{2}\cdot(s-1+\qi),\qquad
\tS:=\sqrt{2}\cdot(s-1+\qk),\qquad
\tT:=t-1+\qk,\qquad
T:=t-1+\qi.
\]
In order to show that this polynomial is associated to a celestial surface, we set
\[
U:=\frac{S\cdot\tS}{S\cdot S^*}
\qquad\text{and}\qquad
V:=\frac{\tT\cdot T}{T\cdot T^*}.
\]
Properties of the quaternionic conjugation (see \RL{basic}{a}) show that
\[
(S\cdot\tS)\cdot (S\cdot\tS)^*=(S\cdot S^*)\cdot(\tS\cdot\tS^*)
\quad\text{and}\quad
(\tT\cdot T)\cdot (\tT\cdot T)^*=(\tT\cdot\tT^*)\cdot( T\cdot T^*).
\]
It follows that $U\cdot U^*=V\cdot V^*=1$, since
\[
S\cdot S^*=\tS\cdot\tS^*=2\cdot(s^2-2\,s+2)
\qquad\text{and}\qquad
\tT\cdot\tT^*= T\cdot T^*=t^2-2\,t+2.
\]
Hence,
$U,V\c\R\to S^3$ define rational functions into the
unit-quaternions
\[
S^3\cong\set{h\in\H}{h\cdot h^*=1}.
\]
We consider the following surfaces, where
$.^-$ denotes the Zariski closure and
$c:=(0,0,0,1)$ is the center of the stereographic projection~$\mu$:
\[
Y:=\set{U(s)\cdot V(t)}{s,t\in\R}^-\subset S^3
\qquad\text{and}\qquad
Z:=\mu(Y\setminus\{c\})^-\subset\R^3.
\]
The numerators and denominators of~$U$ and~$V$ are polynomials of degree two,
which implies that $U$ and~$V$ parametrize irreducible conics with real points.
Therefore, $U$ and $V$ parametrize circles in~$S^3$ so that $Z$ is a $\lambda$-circled celestial surface.
\Cref{thm:c} predicts that $\deg Y\in\{2,4,8\}$ and $\lambda\in\{2,3,4,5\}$.
In particular, notice that if $\lambda\neq 2$, then $\deg Y\neq 8$ by \Cref{thm:c}\ref{thm:c:a}
and thus $\lambda\neq 6$ by \Cref{thm:c}\ref{thm:c:c}.
For this example, we find that
\[
Z=\set{(x,y,z)\in\R^3}{x^2+y^2-z^2-1=0},
\]
and thus $Z$ is a CH1 cyclide so that $\deg Y=4$ and $\lambda=3$.
The CH1 cyclide in \Cref{fig:ABCD} is M\"obius equivalent to~$Z$
and realized as the stereographic projection of $Y$ from a center in $S^3\setminus Y$.
\END
\end{example}

For the remaining examples in \Cref{fig:ABCD},
we refer to \citep[Example~26]{2023trans}.
We now conclude this introduction with an overview of the remaining article.

\subsection{Overview}
\label{sec:overview}

In \textsection\ref{sec:factor}, we prove \Cref{thm:fct}
and its geometric Corollaries~\ref{cor:fct} and~\ref{cor:clifford}.
The remaining sections are dedicated to proving \Cref{thm:c}.
In \textsection\ref{sec:model}, we consider a model
of M\"obius geometry
in terms of a hyperquadric in $\P^4$ of signature~$(1,4)$.
Now suppose that $X$ is the projective model of a celestial surface
embedded into this hyperquadric.
First, we assume that there exists a birational morphism $\varphi\c \P^1\times\P^1\to X$
whose components are of bidegree~$(2,2)$.
We show in \textsection\ref{sec:morphism} that
under this assumption there exists a hyperplane section of $X$
that consists of four complex double lines that are parameter lines with respect to~$\varphi$.
These complex double lines lie in a quadratic surface.
If this quadratic surface is smooth, then we show in \textsection\ref{sec:E}
that $X$ is M\"obius equivalent to a product of circles.
If this quadratic surface is singular, then we show in \textsection\ref{sec:U}
that $X$ is M\"obius equivalent to a sum of circles.
In \textsection\ref{sec:proof}, we conclude the proof of \Cref{thm:c} and \Cref{cor:c}.
In particular, we know from \Cref{thm:A}
that $\deg X\in\{2,4,8\}$ and if $\deg X=8$,
then $\varphi$ indeed exists.
The classification in case $\deg X\in\{2,4\}$
is completed using \Cref{thm:B} and concerns birational
maps $\P^1\times\P^1\dto X$ that are not everywhere defined.
\Cref{thm:A,thm:B} depend on external references.

The proof of \Cref{thm:B} is delayed until
Appendix~\ref{app:B}.
The proof of \Cref{thm:c}\ref{thm:c:d}, namely that $X$ cannot be
M\"obius equivalent to both a sum and product of circles,
is delayed until Appendix~\ref{app:ABCD}.

We did an effort to make the article self-contained and
accessible to a wide audience.
For the reader's convenience, we sometimes include proofs of known results
together with a reference to the original result.

\section{Bivariate quaternionic factorizations}
\label{sec:factor}

The goal of this section is to prove \Cref{thm:fct}
and its geometric Corollaries~\ref{cor:fct} and~\ref{cor:clifford}.

Before we start with a basic algebraic lemma for the non-expert,
notice that if $R\in\HC[s,t]$, then
$R(s,t)=R_0+R_1\cdot s +\cdots+R_d\cdot s^d$ with $R_i\in\HC[t]$ for all $i\in\{1,\ldots,d\}$.
Indeed, the variables $s$ and $t$ commute with the coefficients by assumption.
The \df{evaluation}~$R(c,t)$ is for all $c\in \C$ is defined as $R_0+R_1\cdot c +\cdots+R_d\cdot c^d$.
Notice that $Q\cdot c=c\cdot Q$ for all $c\in\C$ and $Q\in\HC[t]$ by \RL{basic}{a} below.
In this article, we do not evaluate at quaternions in~$\HC\setminus\C$.

\begin{lemma}
\label{lem:basic}
\hspace{1em}
\begin{claims}
\item\label{lem:basic:a}
For all $Q,H\in\HC[s,t]$ and $P\in\C[s,t]$,
$P\cdot Q=Q\cdot P$,
$P^*=P$,
$Q\cdot Q^*=Q^*\cdot Q\in \C[s,t]$,
$(H\cdot Q)^*=Q^*\cdot H^*$
and $(H+Q)^*=H^*+Q^*$.
If $Q\in \H[s,t]$, then $Q\cdot Q^*\in \R[s,t]$.

\item\label{lem:basic:b}
For all $Q,R\in\HC[t]$ and $p\in\C$
we have $Q(p)\cdot R(p)=(Q\cdot R)(p)$.
Moreover, $Q(p)=0$ if and only if $t-p \mid Q$.
If $Q\in\H[t]$ and $p\in\C\setminus\R$,
then $Q(p)=0$ if and only if $(t-p)\cdot(t-\overline{p})\mid Q$.

\item\label{lem:basic:c}
If $Q\in\HC[t]$ is linear, then $[Q]$ is a complex point if and only if
$t-p\mid Q$ for some $p\in\C$.

\item\label{lem:basic:d}
If $A\in\H[t]$ and $B\in\R[t]$, then there exist
$Q,R\in\H[t]$ \st $A=Q\cdot B+R$ and $\deg R<\deg B$.

\item\label{lem:basic:e}
For all $A,B\in\H[t]$,
we have $\deg(A\cdot B)=\deg A+\deg B$ and $\deg A^*=\deg A$.


\end{claims}
\end{lemma}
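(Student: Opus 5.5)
The statement is \Cref{lem:basic}, a collection of elementary facts about $\HC[s,t]$. I would prove the five items in order, reducing each to coefficientwise computations in $\HC$. No deep input is needed; the only points requiring care are the noncommutativity in~(b) and~(d).

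For~(a), the complex scalars $\C=\bas{1}_\C$ lie in the center of $\HC$, since $\ii$ commutes with $\H$ by the definition of the multiplication on pairs. The variables also commute with everything by assumption, so $P\cdot Q=Q\cdot P$ and $P^*=P$ for $P\in\C[s,t]$. The identities $(H\cdot Q)^*=Q^*\cdot H^*$ and $(H+Q)^*=H^*+Q^*$ reduce, by bilinearity and centrality of the monomials $s^it^j$, to the corresponding statements for coefficients $h,q\in\HC$. These in turn follow from the real quaternion identity $(hq)^*=q^*h^*$, extended $\C$-linearly (conjugation fixes $\ii$). Next, $h+h^*\in\C$ for every $h\in\HC$. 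Applying this with $h=QQ^*$, we get $QQ^*=(QQ^*)^*$. Writing $Q=\sum q_m m$ over monomials $m$, we have $QQ^*=\sum_m q_mq_m^*\,m^2+\sum_{m<m'}(q_mq_{m'}^*+q_{m'}q_m^*)\,mm'$. Each $q_mq_m^*$ equals $\sum_k (q_m)_k^2\in\C$, and each cross term has the form $x+x^*\in\C$ with $x=q_mq_{m'}^*$. Hence $QQ^*\in\C[s,t]$. Symmetrically, $Q^*Q\in\C[s,t]$, and the two agree because $q^*r+r^*q=qr^*+rq^*$: both equal twice the symmetric bilinear form $\sum_k q_kr_k$. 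If all coefficients are real quaternions, these sums are real.

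For~(b), the evaluation $Q(p)R(p)=(QR)(p)$ holds because $p$ is central, so evaluation at $p\in\C$ is a ring homomorphism $\HC[t]\to\HC$. For the root criterion, division by the central monic $t-p$ gives $Q=\tQ\cdot(t-p)+Q(p)$. For $Q\in\H[t]$ and nonreal $p$, if $Q(p)=0$ then also $Q(\op)=\overline{Q(p)}=0$, so $t-p$ and $t-\op$ both divide $Q$. Applying the division twice yields $Q=\tQ(t-p)(t-\op)$ with $\tQ\in\HC[t]$. Since $(t-p)(t-\op)\in\R[t]$ is monic, dividing $Q$ by it over $\H[t]$ (item~(d)) gives a quotient in $\H[t]$, and uniqueness of the quotient forces $\tQ\in\H[t]$. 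The converse is evaluation. I will prove~(d) before using it here: it is ordinary long division by a monic real polynomial after normalizing the leading coefficient of~$B$. Since $B$ is central, the usual degree-reduction step works coefficientwise.

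For~(c), write $Q=a t+b$. Then $[Q]$ is a point exactly when $a$ and $b$ are $\C$-linearly dependent in $\C^4$. If $a\neq 0$, dependence means $b=-pa$, i.e.\ $Q=a(t-p)$. If $a=0$, then $Q=b$ is constant, and I would read this degenerate case (divisibility by a linear factor) as excluded by the linearity hypothesis. For~(e), the leading coefficients of $A,B\in\H[t]$ are nonzero real quaternions and $\H$ has no zero divisors, so $\deg(AB)=\deg A+\deg B$; conjugation is coefficientwise and injective, so $\deg A^*=\deg A$.

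The main obstacle is the step in~(b) showing the quotient stays in $\H[t]$; I handle it via~(d) and uniqueness of division, as above.
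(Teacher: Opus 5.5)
Your proof is correct and follows the paper's approach: every item is reduced to coefficientwise or componentwise computations in $\HC$, and (c), (d), (e) are argued essentially as in the paper (your $Q=a\,t+b$ formulation of (c), with the remark that linear must mean degree exactly one, is if anything cleaner). The deviations are minor. In (b) you use the remainder theorem for the central monic $t-p$ together with uniqueness of division via (d); the paper instead just observes that the component polynomials $Q_1,\ldots,Q_4$ vanish at $p$, and hence, being real, also at $\op$. In (a), the sentence deriving $QQ^*=(QQ^*)^*$ from $h+h^*\in\C$ does not follow as written, but it is not needed: your explicit expansion proves $QQ^*\in\C[s,t]$ on its own, and alternatively $(QQ^*)^*=Q^{**}Q^*=QQ^*$ does the job directly.
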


\begin{proof}
\ref{lem:basic:a}
The identities are direct consequences of the definition
of the quaternion multiplication and conjugation, and our convention that
the complex quaternionic coefficients commute with the variables $s$ and $t$.

\ref{lem:basic:b}
The identity $Q(p)\cdot R(p)=(Q\cdot R)(p)$ follows from the fact that
complex constants commute with complex quaternions.
Let us write $Q=Q_1+Q_2\,\qi+Q_3\,\qj+Q_4\,\qk$
for some complex polynomials~$Q_1,\ldots,Q_4\in\C[t]$.
We observe that $Q(p)=0$
if and only if
$Q_1(p)=\cdots=Q_4(p)=0$
if and only if
$Q_i=\tQ_i\cdot(t-p)$ for some $\tQ_i\in\C[t]$
if and only if
$t-p\mid Q$.
Moreover, if $Q \in \H[t]$ and $p \in \C \setminus \R$,
then $Q_1(\op)=\cdots=Q_4(\op)=0$ so that $(t-p)\cdot(t-\op)\mid Q$.

\ref{lem:basic:c}
By assumption
$Q=Q_1+Q_2\,\qi+Q_3\,\qj+Q_4\,\qk$
and
$Q_i=a_i\cdot (t-b_i)$ for some $a_i,b_i\in\C$ \st $a_j\neq 0$
for at least one index $j\in\{1,2,3,4\}$.
We observe that
$[Q]$ is a complex point
if and only if
$[Q]=(a_1\cdot (t-p):\ldots:a_4\cdot (t-p))=(a_1:\ldots:a_4)$ for some $p\in \C$
if and only if
$t-p\mid Q$ for some $p\in \C$.

\ref{lem:basic:d}
Let us write $A=A_1+A_2\,\qi+A_3\,\qj+A_4\,\qk$ with $A_1,\ldots,A_4\in\R[t]$.
Let $Q_i$ and $R_i$ be the polynomial quotient and remainder, \resp,
of $A_i$ and $B$
so that $A_i=Q_i\cdot B+R_i$ and $\deg R_i<\deg B$
for all $1\leq i\leq 4$
by Euclid's division lemma for polynomials in~$\R[t]$.
Thus if
$Q=Q_1+Q_2\,\qi+Q_3\,\qj+Q_4\,\qk$ and
$R=R_1+R_2\,\qi+R_3\,\qj+R_4\,\qk$,
then $A=Q\cdot B+R$ and $\deg R<\deg B$ as asserted.

\ref{lem:basic:e}
Notice that there exist~$a,b\in\H\setminus\{0\}$
\st the leading term of $A\cdot B$ is $a\cdot b\cdot t^{\deg A+\deg B}$
and the leading term of $A^*$ is $a^*\cdot t^{\deg A}$.
Since $a$ and $b$ are non-zero real quaternions,
we find that $a\cdot b\neq 0$ and thus $\deg(A\cdot B)=\deg A+\deg B$ and $\deg A^*=\deg A$.
\end{proof}

Since $A\cdot A^*\in\R[t]$ for all $A\in\H[t]$ by \RL{basic}{a},
we observe that $A\cdot A^*$ factors into at most quadratic irreducible factors
by the fundamental theorem of algebra for real polynomials.
The following lemma is essentially the known fundamental theorem of algebra for quaternionic
polynomials (see \cite{1966} and \citep[Theorem~5.2]{2008}).

\begin{lemma}
\label{lem:uni}
Suppose that $A\in\H[t]$ is a primitive monic polynomial of degree $d\geq 1$.
\begin{claims}
\item\label{lem:uni:a}
For all tuples $(B_1,\ldots,B_r)$ of irreducible polynomials in~$\R[t]$
\st
\[
A\cdot A^*=B_1\cdot\ldots\cdot B_r,
\]
we have $r=d$ and
there exist~$a_1,\ldots,a_r\in\H$
\st $B_i=(t-a_i)\cdot (t-a_i)^*$ for all $1\leq i\leq r$ and
$
A=(t-a_r)\cdots(t-a_1).
$

\item\label{lem:uni:b}
If there exist $E,F\in\H[t]$ and $e,f\in\H$ \st
\[
A=E\cdot(t-e)=F\cdot(t-f)
\quad\text{and}\quad
(t-e)\cdot(t-e)^*=(t-f)\cdot(t-f)^*,
\]
then $e=f$.

\end{claims}
\end{lemma}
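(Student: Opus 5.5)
We prove both parts of \Cref{lem:uni} by extracting a single right linear factor and then inducting on~$d$.

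\textbf{Key step (one right factor).} Let $B\in\R[t]$ be a monic irreducible factor of $A\cdot A^*$; the primitivity hypothesis matters here.

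First suppose $B=t-c$ with $c\in\R$. Then $A(c)\cdot A(c)^*=0$. Since $A(c)\in\H$ and the norm on $\H$ is positive definite, $A(c)=0$. By \RL{basic}{b}, $t-c\mid A$ with a real polynomial factor, contradicting primitivity. Hence every $B_i$ is quadratic, and $2r=\deg(A\cdot A^*)=2d$ by \RL{basic}{e}. So $r=d$.

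Now let $B$ be quadratic with complex root $p\notin\R$. Divide $A$ by $B$ as in \RL{basic}{d}: $A=Q\cdot B+R$ with $R=r_1t+r_0$ and $r_0,r_1\in\H$. Evaluating at $p$ gives $A(p)=R(p)$.

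We have $r_1\neq0$. Otherwise $R(p)=r_0$ is a real quaternion with $r_0r_0^*=0$, so $r_0=0$ and $B\mid A$, contradicting primitivity.

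Next, $B\mid A\cdot A^*$ gives $(A\cdot A^*)(p)=0$. Since $B\mid Q\cdot B\cdot A^*$ and $B\mid R\cdot Q^*\cdot B$, it follows that $B\mid R\cdot R^*$. But $R\cdot R^*$ is a real quadratic polynomial with leading coefficient $r_1r_1^*>0$, so $R\cdot R^*=r_1r_1^*\cdot B$. Set $h:=-r_1^{-1}r_0$, so that $R=r_1(t-h)$. Then $(t-h)(t-h)^*=R R^*/(r_1r_1^*)=B$. Since $B$ is real it commutes with everything, so $A=(Q\cdot B r_1^{-1}... )$; more precisely,
\[
A=Q\cdot(t-h)(t-h)^*+r_1(t-h)=\bigl(Q\cdot(t-h)^*+r_1\bigr)\cdot(t-h).
\]
This gives the right factor $t-h$ with norm $B$.

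\textbf{Part (a) by induction.} Write $A=A'\cdot(t-a_1)$ with $(t-a_1)(t-a_1)^*=B_1$. Then $A'$ is monic of degree $d-1$, and $A'\cdot A'^*\cdot B_1=A\cdot A^*$. Hence $A'\cdot A'^*=B_2\cdots B_r$ by unique factorization in $\R[t]$.

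$A'$ is primitive: a real factor of $A'$ would also divide $A$. Apply induction to $A'$; the case $d=1$ is trivial since then $A=t-a_1$. Ordering the factors so that $B_1$ is peeled off first gives $A=(t-a_r)\cdots(t-a_1)$.

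\textbf{Part (b).} Put $B:=(t-e)(t-e)^*=(t-f)(t-f)^*$. Then $B$ has no real root, since otherwise its norm would vanish at a real point, forcing $e\in\R$ and contradicting primitivity. Let $p$ be a root of $B$.

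Evaluating gives $A(p)=E(p)(p-e)=F(p)(p-f)$. Also $A=E(t-e)$ gives $A-(e'...)$; the clean route is this: $A\equiv R \pmod B$, and the remainder $R=r_1(t-h)$ is unique. Since $E\cdot(t-e)=E\cdot(t-e)$, reducing $E$ modulo $B$ (\RL{basic}{d}) as $E=E_1B+(\alpha t+\beta)$ gives
\[
(\alpha t+\beta)(t-e)\equiv \alpha(t-e)^{*\,\prime}\cdots
\]
Concretely, $t\cdot(t-e)\equiv(\ldots)(t-e) \pmod B$ because $t^2\equiv \operatorname{tr}\cdot t - n$. Hence the remainder of $A$ equals $\gamma(t-e)$ for some $\gamma\in\H$, and similarly equals $\delta(t-f)$.

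We have $\gamma\neq0$ by primitivity. Comparing coefficients of $t$ gives $\gamma=\delta$, and then comparing constant terms gives $\gamma e=\gamma f$, so $e=f$.

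The main obstacle is this remainder computation: one must show that $(\alpha t+\beta)(t-e)$ reduces modulo $B$ to a left multiple of $(t-e)$. This holds because $t^2\equiv(e+e^*)t-ee^*$ and $e$ satisfies $e^2=(e+e^*)e-ee^*$. The step is routine but should be checked carefully.
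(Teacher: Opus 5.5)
Your proof is correct and is essentially the paper's argument: divide by the irreducible quadratic $B$, use $B\mid R\cdot R^*$ and a degree count to get $R=r_1(t-h)$ with $(t-h)(t-h)^*=B$, peel off $t-h$ and induct, and in (b) compare the two expressions $\gamma(t-e)=\delta(t-f)$ for the remainder (your positivity argument ruling out real linear factors is only a cosmetic variation on the paper's degree count $2\deg R=\deg B_1+\deg C$). The remainder step you flag as the main obstacle in (b), which is garbled in your write-up, is immediate as in the paper: writing $A=Q\cdot B+R$ and using $B=(t-e)^*\cdot(t-e)$ gives $R=\bigl(E-Q\cdot(t-e)^*\bigr)\cdot(t-e)$, and $\deg R<2$ forces the left cofactor to be a constant.
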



\begin{proof}
\ref{lem:uni:a}
If $d=1$, then the assertion is a direct consequence of the assumptions.
By induction hypothesis, we suppose that Assertion~\ref{lem:uni:a} is true for~$d-1$ and~$d\geq 2$.
Notice that $1\leq \deg B_i\leq 2$ by the fundamental theorem of algebra for real polynomials.
By \RL{basic}{d}, there exist $Q,R\in\H[t]$ \st
\[
A=Q\cdot B_1+R
\quad\text{and}\quad
\deg R<\deg B_1\leq 2.
\]
By assumption, $A$ is primitive and thus $A\neq Q\cdot B_1$, which implies that $R\neq 0$.
We apply \RL{basic}{a}, and deduce that $R\cdot R^*$ is equal to
\begin{gather*}
(A-Q\cdot B_1)\cdot (A-Q\cdot B_1)^*
=
A\cdot A^*-B_1\cdot( A\cdot Q^*+ Q\cdot A^* - B_1\cdot Q\cdot Q^*).
\end{gather*}
Hence, $R\cdot R^*=B_1\cdot C$, where $C:=B_2\cdots B_r - A\cdot Q^*- Q\cdot A^* + B_1\cdot Q\cdot Q^*$.
Since $\deg(R\cdot R^*)=2\cdot \deg R=\deg B_1+\deg C$ by \RL{basic}{e},
it follows that $(\deg R,~\deg B_1,~\deg C)=(1,2,0)$.
Hence, $R=c\cdot(t-a_1)$ for some~$c,a_1\in\H$ with $c\neq 0$.
By \RL{basic}{a}, we have
$R\cdot R^*=(t-a_1)\cdot (t-a_1)^*\cdot c\cdot c^*$
and
$(t-a_1)\cdot (t-a_1)^*=(t-a_1)^*\cdot (t-a_1)$.
This implies that $B_1=(t-a_1)^*\cdot (t-a_1)$ and $C=c\cdot c^*$ so that
\begin{equation}
A=Q\cdot B_1+R=Q\cdot (t-a_1)^*\cdot (t-a_1)+c\cdot(t-a_1).
\end{equation}
Therefore, there exists a monic polynomial $\tA\in\H[t]$ of degree $d-1$ without real factors \st
$A=\tA\cdot (t-a_1)$.
We apply the induction hypothesis to~$\tA$
and conclude Assertion~\ref{lem:basic:a}.
In particular, we observe that $\deg B_i=2$ for all $1\leq i\leq r$ so that $r=d$ by \RL{basic}{e}.

\ref{lem:uni:b}
Let $B:=(t-e)\cdot(t-e)^*=(t-f)\cdot(t-f)^*$.
We apply \RL{basic}{d} and obtain $A=Q\cdot B+R$ with $\deg R<\deg B=2$.
Since $t-e$ is a right factor of~$A$ by assumption and of~$B$ by \RL{basic}{a},
we find that it is also a right factor of~$R$.
Similarly, we find that $t-f$ is a right factor of~$R$.
Therefore, $R=u\cdot(t-e)=v\cdot(t-f)$.
We compare coefficients and find that $u=v$ and $-u\cdot e=-v\cdot f$.
We multiply both sides with $-u^*/(u\cdot u^*)=-u^{-1}$ and conclude that $e=f$.
\end{proof}

The following result is classical and can be traced back to
Clifford (see \citep[\textsection7.9 and 7.93]{1998}).
We adapted the algebraic the proof at \citep[Proposition~1]{2020}.

\begin{lemma}
\label{lem:N}
Suppose that $\alpha,h,q\in\HC\setminus\{0\}$ such that $\alpha\cdot\alpha^*=0$.
\begin{claims}
\item\label{lem:N:a}
The generators $\cL_\alpha$ and $\cR_\alpha$ are the two complex lines in the hyperquadric $\cN$
that pass through the complex point $[\alpha^*]\in\cN$.

\item\label{lem:N:b}
If $[h],[q]\in \cL_\alpha$, then $h\cdot q^*=0$.\\
If $[h],[q]\in \cR_\alpha$, then $q^*\cdot h=0$.

\item\label{lem:N:c}
If $h\cdot q^*=h\cdot h^*=q\cdot q^*=0$, then $[h],[q]\in \cL_{q^*}$.\\
If $q^*\cdot h=h\cdot h^*=q\cdot q^*=0$, then $[h],[q]\in \cR_{q^*}$.

\item\label{lem:N:d}
If $[h]\in\cL_\alpha$ and $q\cdot h\neq 0$, then $[q\cdot h]\in \cL_\alpha$.\\
If $[h]\in\cR_\alpha$ and $h\cdot q\neq 0$, then $[h\cdot q]\in \cR_\alpha$.
\end{claims}
\end{lemma}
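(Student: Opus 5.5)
The plan is to work directly with the bilinear structure of the complex quaternions. The key observation is that the quaternionic conjugation gives the symmetric bilinear form $\langle h,q\rangle := \tfrac12(h\cdot q^*+q\cdot h^*)$, which is the complexified Euclidean form on $\C^4$. In particular $h\cdot h^*$ is the quadratic form defining $\cN$. We will also use two standard facts. First, the left multiplication map $h\mapsto h\cdot\alpha$ and the right multiplication map $h\mapsto \alpha\cdot h$ are $\C$-linear. Second, $(\alpha\cdot\alpha^*)$ is central, so $h\cdot\alpha\cdot\alpha^*=0$ and $\alpha^*\cdot\alpha\cdot h=0$ for all $h$ when $\alpha\cdot\alpha^*=0$.

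For \ref{lem:N:a}, start by noting that if $\alpha\neq0$ and $\alpha\cdot\alpha^*=0$, then the kernel of $h\mapsto h\cdot\alpha$ contains $\alpha^*$, since $\alpha^*\cdot\alpha=\alpha\cdot\alpha^*=0$ by \RL{basic}{a}. The map is not zero, because $1\cdot\alpha\neq0$. Its image lies in the set $\{x\mid x\cdot\alpha^*=0\}$, which is itself a kernel of a nonzero linear map. Counting dimensions then forces the kernel to have dimension exactly $2$, and I would check this with a quick computation in an explicit basis, for instance after normalizing to $\alpha=1+\ii\,\qi$ using the transitivity of unit quaternion actions. So $\cL_\alpha$ and $\cR_\alpha$ are lines through $[\alpha^*]$. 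They lie in $\cN$ because $h\cdot\alpha=0$ gives $0=(h\cdot\alpha)\cdot(h\cdot\alpha)^*=(h\cdot h^*)(\alpha\cdot\alpha^*)$, which is useless since $\alpha\cdot\alpha^*=0$. Instead I would argue as follows: if $h\cdot h^*\neq0$ then $h$ is invertible with inverse $h^*/(h\cdot h^*)$, and then $\alpha=0$, a contradiction. Finally the two lines are distinct, since otherwise the kernel of $h\mapsto h\cdot\alpha$ would be a two-sided ideal, which is false as one sees in the normalized basis. A smooth quadric surface has exactly two lines through each point, so these are the two lines.

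For \ref{lem:N:b}, if $h\cdot\alpha=q\cdot\alpha=0$, then $h$ and $q$ both lie in the line $\cL_\alpha\subset\cN$, so $h+\lambda q$ is null for every $\lambda$. Expanding gives $h\cdot q^*+q\cdot h^*=0$. This shows only that the symmetric part vanishes, so to get $h\cdot q^*=0$ itself, I would use that $\cL_\alpha$ is spanned by $\alpha^*$ and one further element $\beta$ with $\beta\cdot\alpha=0$. Then I check $\alpha^*\cdot\beta^*=(\beta\cdot\alpha)^*=0$, together with $\alpha^*\cdot\alpha=0$ and $\beta\cdot\beta^*=0$. For $\beta\cdot\alpha^{**}=\beta\cdot\alpha=0$, bilinearity reduces everything to products of basis vectors, and $\beta\cdot\beta^*=0$ and $\alpha^*\cdot\alpha=0$ hold by nullity. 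The mixed product $\beta\cdot\alpha$ vanishes by definition, and $\alpha^*\cdot\beta^*=(\beta\cdot\alpha)^*=0$. The right-generator case is symmetric, obtained by applying $^*$.

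For \ref{lem:N:c}, if $h\cdot q^*=0$ with $q$ null, then $[h]\in\cL_{q^*}$ by definition, and $q\cdot q^*=0$ gives $[q]\in\cL_{q^*}$ as well. For \ref{lem:N:d}, associativity gives $(q\cdot h)\cdot\alpha=q\cdot(h\cdot\alpha)=0$.

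The main obstacle is the dimension count together with the distinctness claim in \ref{lem:N:a}. I would settle both by a direct coordinate computation after normalizing $\alpha$.
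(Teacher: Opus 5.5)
Your proposal is correct. Parts (c) and (d) match the paper's definitional checks, but (a) and (b) take a different route.

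\textbf{Part (a).} The paper never normalizes $\alpha$. It shows that $\langle\alpha^*,\qi\alpha^*,\qj\alpha^*,\qk\alpha^*\rangle_\C=\HC\cdot\alpha^*$ lies in $L_\alpha=\{h\mid h\cdot\alpha=0\}$ and has dimension at least two, by a coordinate contradiction. It gets the upper bound from nullity plus the fact that $\cN$ contains no planes, and it only asserts $\cL_\alpha\neq\cR_\alpha$.

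Your claim that ``counting dimensions forces dimension exactly $2$'' does not follow from $\alpha^*\in L_\alpha$ and $\operatorname{im}(h\mapsto h\alpha)\subseteq L_{\alpha^*}$ alone. With rank--nullity these give $\dim L_\alpha+\dim L_{\alpha^*}\geq 4$. Your invertibility argument for nullity, plus the no-planes fact applied to both $\alpha$ and $\alpha^*$, bounds each summand by $2$. This closes the gap, and it is just the paper's argument in cleaner form, since $\HC\cdot\alpha^*$ is exactly the image of $h\mapsto h\alpha^*$.

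Your fallback via $\alpha=1+\ii\,\qi$ also works, once you record two facts:
\begin{itemize}
\item $\{h\mid h\,u\alpha v=0\}=L_\alpha\cdot u^{-1}$ for invertible $u,v$, so the dimension and the two-sided-ideal property are invariant;
\item $h\mapsto u h v^*$ with real unit $u,v$ acts transitively on $\cN$.
\end{itemize}
The two kernels are then $\{h_1=\ii h_2,\ h_3=-\ii h_4\}$ and $\{h_1=\ii h_2,\ h_3=\ii h_4\}$, visibly two-dimensional and distinct. Normalization buys a one-line check, and your two-sided-ideal remark (equivalently, simplicity of $\HC\cong M_2(\C)$) justifies the distinctness the paper leaves implicit. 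The paper's route avoids needing the transitivity fact.

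\textbf{Part (b).} The paper argues that $\cL_\alpha$ is a line of $\cN$ through $[q]$ and hence equals $\cL_{q^*}$. This tacitly uses that two left generators through a common point coincide. Your expansion of the $\C$-bilinear map $(h,q)\mapsto h\cdot q^*$ on your basis $\{\alpha^*,\beta\}$ of $L_\alpha$ uses only $\alpha^*\alpha=\beta\alpha=\alpha^*\beta^*=\beta\beta^*=0$. It is more direct and needs only $\dim L_\alpha=2$ and nullity.
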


\begin{proof}
\ref{lem:N:a}
The map $\HC\to\HC$ that sends $h$ to $h\cdot\alpha$ is linear
with respect to the underlying vector space~$\langle 1,\qi,\qj,\qk \rangle_\C$
and has kernel
\[
L_\alpha:=\set{h\in\HC}{h\cdot\alpha=0}.
\]
Let
$\beta:=\alpha^*$ and
$V_\beta:=\langle \beta,~\qi\cdot\beta,~\qj\cdot\beta,~\qk\cdot\beta\rangle_\C$.
By assumption,
$
\alpha\cdot\alpha^*=
\beta\cdot\beta^*=
\beta\cdot\alpha=0$ and thus $V_\beta\subseteq L_\alpha$.

Now suppose by contradiction that $\dim V_\beta<2$.
In this case,
\[
\beta=c_1\cdot\qi\cdot\beta=c_2\cdot\qj\cdot\beta=c_3\cdot\qk\cdot\beta
\]
for some $c_1,c_2,c_3\in\C$. Since
\[
\qi=\qj\cdot\qk=-\qk\cdot\qj,\quad
\qj=\qk\cdot\qi=-\qi\cdot\qk,\quad
\qk=\qi\cdot\qj=-\qj\cdot\qi,\quad
\qi^2=\qj^2=\qk^2=-1,
\]
we find that
\[
\begin{array}{rlrlrlrlr}
                \beta &=&          \beta_1 &+&         \beta_2\cdot\qi &+&         \beta_3\cdot\qj &+&         \beta_4\cdot\qk, \\
c_1\cdot\qi\cdot\beta &=& -c_1\cdot\beta_2 &+& c_1\cdot\beta_1\cdot\qi &-& c_1\cdot\beta_4\cdot\qj &+& c_1\cdot\beta_3\cdot\qk, \\
c_2\cdot\qj\cdot\beta &=& -c_2\cdot\beta_3 &+& c_2\cdot\beta_4\cdot\qi &+& c_2\cdot\beta_1\cdot\qj &-& c_2\cdot\beta_2\cdot\qk, \\
c_3\cdot\qk\cdot\beta &=& -c_3\cdot\beta_4 &-& c_3\cdot\beta_3\cdot\qi &+& c_3\cdot\beta_2\cdot\qj &+& c_3\cdot\beta_1\cdot\qk.
\end{array}
\]
By comparing the coefficients, we see that
$\beta_1=-c_i\cdot\beta_{i+1}$,
$\beta_{i+1}=c_i\cdot\beta_1$
and thus
$\beta_1\neq 0$ and
$c_i^2=-1$
for all $1\leq i\leq 3$.
This implies that $\beta_2=\beta_3=\beta_4=\pm\ii\cdot\beta_1$
and thus
\[
\beta\cdot\beta^*=\beta_1^2+\beta_2^2+\beta_3^2+\beta_4^2=-2\cdot\beta_1^2=0.
\]
We arrived at a contradiction as $\beta_1\neq 0$.

We established that $\dim L_\alpha\geq \dim V_\beta\geq 2$ and thus $\dim \cL_\alpha\geq 1$
as $\cL_\alpha=\P(L_\alpha)$ by definition.
Suppose that $h\in L_\alpha$ so that $h\cdot\alpha=0$ and thus $h^*\cdot h\cdot\alpha=h\cdot h^*\cdot\alpha=0$.
Since $h\cdot h^*\in\C$ and $\alpha\neq 0$, we find that $h\cdot h^*=h^*\cdot h=0$.
By definition, $\cN=\P(\set{h\in\HC}{h\cdot h^*=0})$
and thus $\cL_\alpha\subset\cN$.
Since $\cN$ is a smooth hyperquadric of~$\P^3$, it does not contain complex planes.
It follows that $\cL_\alpha$ is a complex line through the complex point~$[\alpha^*]$.
Similarly, we show that $\cR_\alpha\subset\cN$ is a complex line through the complex point~$[\alpha^*]$.
Since $\cL_\alpha\neq\cR_\alpha$, we concluded the proof for Assertion~\ref{lem:N:a}.

For Assertion~\ref{lem:N:b},
notice that if $[h],[q]\in \cL_\alpha$, then
$[h]\in \cL_{q^*}$ by Assertion~\ref{lem:N:a} and thus $h\cdot q^*=0$ by definition.
The remaining Assertions \ref{lem:N:b}, \ref{lem:N:c} and \ref{lem:N:d}
are now a straightforward consequence of the definitions.
\end{proof}

The following lemma is \citep[Lemma~2.2]{2022abc}, but we included a proof for
the reader's convenience.

\begin{lemma}[abc-Lemma]
\label{lem:abc}
If $a,b,c\in\HC$ \st $a\cdot b\cdot c=0$ and $a\cdot a^*=b\cdot b^*=c\cdot c^*=0$,
then either $a\cdot b=0$ or $b\cdot c=0$.
\end{lemma}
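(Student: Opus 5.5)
We will use the geometry of \Cref{lem:N}: a non-zero complex quaternion $h$ with $h\cdot h^*=0$ lies on exactly one left generator and one right generator through $[h]$. If one of $a,b,c$ is zero the claim is trivial, so we assume that $a,b,c\neq 0$ and that $a\cdot b\neq 0$, and we aim to show $b\cdot c=0$.

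First we locate $a\cdot b$. Since $b\cdot b^*=0$, we have $[b^*]\in\cN$, so $\cL_{b^*}$ and $\cR_{b^*}$ are defined. By definition $[b]\in\cL_{b^*}$ because $b\cdot b^*=0$. Applying \RL{N}{d} with $q:=a$ and $a\cdot b\neq 0$ gives $[a\cdot b]\in\cL_{b^*}$. In addition, $(a\cdot b)\cdot(a\cdot b)^*=a\cdot(b\cdot b^*)\cdot a^*=0$ by \RL{basic}{a}. Hence $a\cdot b$ is a non-zero null quaternion lying on the same left generator as $b$.

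Next, set $\gamma:=c$, so that $\gamma\cdot\gamma^*=0$, and consider $\cL_\gamma=\P(\set{h}{h\cdot c=0})$. The hypothesis $a\cdot b\cdot c=0$ says exactly that $[a\cdot b]\in\cL_c$. So $[a\cdot b]$ lies on both $\cL_{b^*}$ and $\cL_c$. By \RL{N}{a}, every left generator passing through a point of $\cN$ is the unique left generator through that point: the left generators form one ruling of the smooth quadric $\cN$, so two distinct left generators are disjoint. Consequently $\cL_{b^*}=\cL_c$, and therefore $[b]\in\cL_c$, that is, $b\cdot c=0$.

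The main obstacle is this disjointness claim, since \RL{N}{a} as stated only asserts that $\cL_\alpha$ and $\cR_\alpha$ are the two lines through $[\alpha^*]$. To bridge this, observe that if $[h]\in\cL_\alpha$ for some null $h\neq 0$, then $\cL_\alpha$ is a line in $\cN$ through $[h]$. By \RL{N}{a} applied with $\alpha':=h^*$, the only lines in $\cN$ through $[h]=[\alpha'^*]$ are $\cL_{h^*}$ and $\cR_{h^*}$. So we must exclude $\cL_\alpha=\cR_{h^*}$. We would do this by showing that left and right generators are never equal. If $\cL_\alpha=\cR_\beta$, then every $h$ in the corresponding 2-dimensional space satisfies both $h\cdot\alpha=0$ and $\beta\cdot h=0$. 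Writing this space as $\langle\alpha^*,\qi\alpha^*,\ldots\rangle$, which is the left ideal $\HC\cdot\alpha^*$ by the proof of \RL{N}{a}, it would then also be a right ideal. That makes it a two-sided ideal, which contradicts the simplicity of $\HC\cong M_2(\C)$. Alternatively, one can verify $\cL_\alpha\neq\cR_\beta$ by a short direct coordinate check. With this in hand, $\cL_{b^*}=\cL_{(a b)^*}=\cL_c$, and the conclusion follows.
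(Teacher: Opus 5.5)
Your proof is correct, but it takes a different route from the paper's.

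The paper works with right generators. Since $b\cdot h=0$ implies $a\cdot b\cdot h=0$, one has $\cR_b\subseteq\cR_{a\cdot b}$. Both are lines by \RL{N}{a}, because $b\neq 0$, $a\cdot b\neq 0$ and $(a\cdot b)\cdot(a\cdot b)^*=0$. Hence $\cR_b=\cR_{a\cdot b}$, and since $[c]\in\cR_{a\cdot b}$ one gets $b\cdot c=0$. That argument only needs the annihilators of non-zero null quaternions to be 2-dimensional; an inclusion of equal-dimensional subspaces does the rest. The hypotheses $a\cdot a^*=0$ and $c\cdot c^*=0$ are never used.

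You instead intersect the two left generators $\cL_{b^*}$ and $\cL_c$ at the point $[a\cdot b]$. This forces you to prove the ruling property, namely that a left generator never equals a right generator. Your justification is valid: the subspace would be both the left ideal $\set{h\in\HC}{h\cdot\alpha=0}$ and the right ideal $\set{h\in\HC}{\beta\cdot h=0}$, hence a proper non-zero two-sided ideal of the simple algebra $\HC\cong M_2(\C)$. As a by-product, this supplies a reason for the claim $\cL_\alpha\neq\cR_\alpha$, which the paper's proof of \RL{N}{a} only asserts.

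The price of your route is an extra ingredient and the use of $c\cdot c^*=0$. The paper's inclusion argument is shorter and slightly more general.
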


\begin{proof}
Suppose that $a\cdot b\neq 0$, $b\neq 0$ and $c\neq0$, otherwise there is nothing to prove.
By definition, $\cR_b\subseteq \cR_{a\cdot b}$ and $[c]\in \cR_{a\cdot b}$.
It follows from \RL{N}{a} that $\cR_b=\cR_{a\cdot b}$ and thus $[c]\in\cR_b$
so that $b\cdot c=0$.
\end{proof}

The following lemma follows from \citep[Proposition~2.1]{2016new}.
See also \citep[Lemma~2]{2024} for an insightful proof that relies on
a non-commutative version of the Euclidean algorithm.
For self-containment, we provide another alternative proof using the abc-Lemma.

\begin{lemma}[AB-Lemma]
\label{lem:AB}
Suppose that $A,B\in\H[t]$ and let $P\in\R[t]$ be a monic quadratic irreducible polynomial.
If $P\mid A\cdot B$, $P\nmid A$ and $P\nmid B$,
then there exist $\tA,\tB\in\H[t]$ and~$a\in\H$ such that
\[
A=\tA\cdot (t-a),\quad
B=(t-a)^*\cdot\tB \quad\text{and}\quad
P=(t-a)\cdot (t-a)^*.
\]
\end{lemma}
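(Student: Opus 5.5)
Since $P$ is irreducible and monic quadratic, we write $P=(t-p)(t-\op)$ with $p\in\C\setminus\R$. We evaluate at $p$, which is legitimate by \RL{basic}{b}, since $(A\cdot B)(p)=A(p)\cdot B(p)$. Set $x:=A(p)$ and $y:=B(p)$ in $\HC$.

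\emph{Step 1: basic properties of $x$ and $y$.} Because $P\mid A\cdot B$ we get $x\cdot y=0$. Because $P\nmid A$ and $P\nmid B$, \RL{basic}{b} gives $x\neq0$ and $y\neq0$. Taking norms, $(x\cdot x^*)(y\cdot y^*)=(xy)(xy)^*=0$, and these norms are complex numbers. I want both norms to vanish. Suppose instead $x\cdot x^*\neq0$. Then $x$ is invertible with inverse $x^*/(x\cdot x^*)$, so $y=x^{-1}xy=0$, a contradiction. Hence $x\cdot x^*=y\cdot y^*=0$, so $[x],[y]\in\cN$. In particular $A\cdot A^*$ vanishes at $p$, so $P\mid A\cdot A^*$ in $\R[t]$.

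\emph{Step 2: extract the right factor of $A$.} I want a factor $t-a$ with $P=(t-a)(t-a)^*$. Divide $A$ by $P$ using \RL{basic}{d}: $A=Q\cdot P+R$ with $\deg R\le1$ and $R\neq0$, since $P\nmid A$. Then $R(p)=x$. Reusing the computation in the proof of \RL{uni}{a}, we have $R\cdot R^*=A A^*-P(\cdots)$, so $P\mid R\cdot R^*$. If $\deg R=0$, then $R\cdot R^*$ is a nonzero real constant, which contradicts $P\mid RR^*$. So $R=c\,(t-a)$ with $c\neq0$, and comparing degrees gives $P=(t-a)(t-a)^*$. Since $P$ is central and $(t-a)^*(t-a)=P$, we get $A=(Q(t-a)^*+c)(t-a)$, which gives $\tA:=Q(t-a)^*+c$.

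\emph{Step 3: show $(t-a)^*$ is a left factor of $B$.} This is the main obstacle, because the same argument applied to $B$ yields some $B=(t-b)^*\tB$ with $P=(t-b)(t-b)^*$, but a priori $b\neq a$. To link the two, I use the zero product $x\cdot y=0$. Set $u:=p-a$, the value of $t-a$ at $p$. Then $uu^*=P(p)=0$ and $u\neq0$, since $a\notin\C$ (if $a$ were complex, $P$ would be reducible over $\C$ in a way forcing $a=p$, which is non-real but complex, so one checks separately that $a\in\C$ contradicts $a\in\H$ with $p\notin\R$). Write $v:=(p-b)^*$, so $y=v\cdot\tB(p)$ and $x=\tA(p)\cdot u$.

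Apply the abc-Lemma (\Cref{lem:abc}) to $\tA(p)\cdot u\cdot y=0$. If $\tA(p)\cdot u$ were $0$, then $x=0$, which is excluded. Therefore $u\cdot y=0$, whenever $\tA(p)$ is null. If $\tA(p)$ is not null, it is invertible and $u\cdot y=0$ follows directly.

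\emph{Step 4: conclude from $u\cdot y=0$.} We have $(t-a)(p)\cdot B(p)=0$, so $\bigl((t-a)\cdot B\bigr)(p)=0$, and by \RL{basic}{b} this gives $P\mid(t-a)B$. Write $(t-a)B=P\cdot C=(t-a)(t-a)^*C$. Cancelling the left factor $t-a$ is valid because $\H[t]$ has no zero divisors, by \RL{basic}{e}. This yields $B=(t-a)^*C$, so $\tB:=C$. The only delicate point is the case analysis feeding the abc-Lemma, which requires all three factors to be null.
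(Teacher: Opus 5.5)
Your proof is correct, and it takes a different, somewhat more economical route than the paper's.

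\textbf{The paper's route.} It first shows $P\mid A\cdot A^*$ and $P\mid B\cdot B^*$. It then invokes \Cref{lem:uni} to factor \emph{both} polynomials, $A=\tA\cdot(t-a)$ and $B=(t-b)\cdot\tB$. Finally it proves $t-b=(t-a)^*$ by a case distinction. If $(p-a)\cdot(p-b)=0$, this follows by comparing monic quadratics. If $(p-a)\cdot(p-b)\neq0$, it derives a contradiction by applying the abc-Lemma to the four-fold product $\tA(p)\cdot(p-a)\cdot(p-b)\cdot\tB(p)=0$, in three subcases.

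\textbf{Your route.} You factor only $A$, redoing the division-by-$P$ step from the proof of \Cref{lem:uni} directly. This has a side benefit: you do not need $A$ to be primitive and monic, which \Cref{lem:uni} formally requires but the AB-Lemma does not provide. You then apply the abc-Lemma once, to the three-fold product $\tA(p)\cdot(p-a)\cdot B(p)=0$, to get $(p-a)\cdot B(p)=0$, i.e.\ $P\mid(t-a)\cdot B$. Cancelling the monic left factor $t-a$ from $(t-a)\cdot B=(t-a)\cdot(t-a)^*\cdot C$ then yields the left factor $(t-a)^*$ of $B$ outright.

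\textbf{Comparison.} Your argument avoids the second factorization, the comparison of $a$ with $b$, and the contradiction case analysis. The paper's version is symmetric in $A$ and $B$ and reuses \Cref{lem:uni} as a black box.

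\textbf{Small clean-ups.}
\begin{itemize}
\item The justification of $u\neq0$ in Step~3 is garbled, and it is in fact unnecessary. The abc-Lemma has no non-vanishing hypotheses, and in the invertible case $u\cdot y=\tA(p)^{-1}\cdot x\cdot y=0$ directly. If you want the fact anyway, $u=p-a\neq0$ simply because $a\in\H$ while $p\notin\R$.
\item The element $v$ and the factorization $B=(t-b)^*\cdot\tB$ are never used and can be deleted.
\item In Step~4, note that $C\in\H[t]$ because $P\in\R[t]$ and $(t-a)\cdot B\in\H[t]$. Alternatively, left cancellation of the monic $t-a$ works even in $\HC[t]$, which does have zero divisors.
\end{itemize}
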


\begin{proof}
Let $p\in \C$ \st $P(p)=0$.
Since $P\mid A\cdot B$, we have $A(p)\cdot B(p)=0$ by \RL{basic}{b}.
Hence, $A^*(p)\cdot A(p)\cdot B(p)=0$.
Suppose by contradiction that $A^*(p)\cdot A(p)\neq 0$.
By \RL{basic}{a}, we have $A^*(p)\cdot A(p)\in \C$ and thus $B(p)=0$.
We arrive at a contradiction as $P\mid B$ by \RL{basic}{b}.
We established that $A^*(p)\cdot A(p)=0$ and thus
$(A\cdot A^*)(p)=0$ by \RL{basic}{b} so that $P\mid A\cdot A^*$.
Similarly, we find that $P\mid B\cdot B^*$.
It now follows from \Cref{lem:uni} that
there exist~$\tA, \tB \in \H[t]$ and $a, b \in \H$ \st
\[
A=\tA\cdot (t-a),\qquad
B=(t-b)\cdot \tB,\qquad
P=(t-a)\cdot (t-a)^*=(t-b)\cdot (t-b)^*.
\]
Notice that if $(t-b)=(t-a)^*$, then we concluded the proof.

First suppose that $(p-a)\cdot(p-b)=0$
so that $P\mid (t-a)\cdot(t-b)$ by \RL{basic}{b}.
Since $P=(t-a)\cdot (t-a)^*$ and thus monic and quadratic, we find that
\[
P=(t-a)\cdot(t-b)=(t-a)\cdot (t-a)^*.
\]
This implies that $(t-b)=(t-a)^*$ and thus we concluded the proof for this case.

Now suppose by contradiction that $(p-a)\cdot(p-b)\neq 0$.
Since $A=\tA\cdot (t-a)$ and $B=(t-b)\cdot \tB$, we find that
\[
A(p)\cdot B(p)=\tA(p)\cdot(p-a)\cdot(p-b)\cdot \tB(p)=0.
\]
This implies that
\[
\tA(p)^*\cdot \tA(p)\cdot(p-a)\cdot(p-b)\cdot \tB(p)\cdot\tB(p)^*=0
\]
and thus either $\tA(p)^*\cdot\tA(p)=0$, $\tB(p)\cdot\tB(p)^*=0$ or both.
We make a case distinction.

If $\tA(p)^*\cdot\tA(p)=0$ and $\tB(p)\cdot\tB(p)^*\neq 0$,
then $\tA(p)\cdot(p-a)\cdot(p-b)=0$
and thus $A(p)=\tA(p)\cdot (p-a)=0$ by the abc-\Cref{lem:abc}.
This implies that $P\mid A$ by \RL{basic}{b} and thus we arrived at a contradiction.

If $\tB(p)^*\cdot\tB(p)=0$ and $\tA(p)\cdot\tA(p)^*\neq 0$,
then $(p-a)\cdot(p-b)\cdot\tB(p)=0$ using a similar argument as before.
Hence, $P\mid B$ and we arrive again at a contradiction.

Finally, suppose that $\tA(p)^*\cdot\tA(p)=0$ and $\tB(p)\cdot\tB(p)^*=0$.
We apply the abc-\Cref{lem:abc} to $\tA(p)\cdot(p-a)\cdot(p-b)\cdot \tB(p)=0$
and find that either $\tA(p)\cdot(p-a)\cdot(p-b)=0$ or
$(p-a)\cdot(p-b)\cdot\tB(p)=0$. As before, these cases imply that $P\mid A$ or $P\mid B$
and thus we arrived at a contradiction.

As all cases are considered, we concluded the proof.
\end{proof}

\begin{proof}[Proof of \Cref{thm:fct} and \Cref{rmk:fct}]
\ref{thm:fct:a}
Let us first prove the $\Rightarrow$ direction.

As an initial step, we suppose by contradiction that $P$ is reducible.
In this case, $P=(t-\alpha)\cdot (t-\beta)$ for some~$\alpha,\beta\in\R$
so that $(t-\alpha)\mid Q_i\cdot Q_i^*$ for all~$i\in I$.
We deduce that $Q_i$ is not primitive, since otherwise $Q_i\cdot Q_i^*$
has only quadratic factors by \RL{uni}{a}.
Therefore, $(t-\alpha)\mid Q_i$ for all $i\in I$.
We arrived at a contradiction as $Q$ is primitive
and thus $P$ must be irreducible.

As $Q$ is primitive, there exists $i\in I$ \st $P\nmid Q_i$.
Since $P$ is irreducible and $P\mid Q_i\cdot Q_i^*$ by assumption,
it follows from \RL{uni}{a} that
there exist $h\in \H$ and $\tQ_i\in\H[t]$
\st $Q_i=\tQ_i\cdot (t-h)$ and $P=(t-h)\cdot (t-h)^*$.
Let $j\in I\setminus\{i\}$. We make a case distinction on
$P\mid Q_j$ and $P\nmid Q_j$.

If $P\mid Q_j$,
then $Q_j=\tQ_j\cdot (t-h)$ for some $\tQ_j\in\H[t]$ by \RL{basic}{a}.

Now suppose that $P\nmid Q_j$.
In this case, $P\nmid Q_j^*$ and $P\mid Q_i\cdot Q_j^*$.
We deduce from the AB-\Cref{lem:AB} and \RL{basic}{a}
that $Q_i=Q_i'\cdot(t-a)$ and $Q_j=\tQ_j\cdot (t-a)$
for some $a\in\H$ and $Q_i',\tQ_j\in\H[t]$ \st $P=(t-a)\cdot(t-a)^*$.
Recall that $Q_i=\tQ_i\cdot (t-h)$ and $P=(t-h)\cdot(t-h)^*$.
It follows from \RL{uni}{b} that $a=h$ and thus $Q_j=\tQ_j\cdot (t-h)$.

We established that $Q_k=\tQ_k\cdot (t-h)$ for all $k\in I$
and thus $Q=\tQ\cdot (t-h)$ for some~$\tQ\in\H[s,t]$.

Next, we proceed with the $\Leftarrow$ direction.
In this case, for all $i\in I$ there exists a polynomial $\tQ_i\in\H[t]$ \st
$Q_i=\tQ_i\cdot (t-h)$.
Hence, $Q_i\cdot Q_j^*=P\cdot\tQ_i\cdot\tQ_j^*$ for all $i,j\in I$
by \RL{basic}{a} so that $P\mid Q_i\cdot Q_j^*$.
This concludes the proof of Assertion~\ref{thm:fct:a}.

The proof of Assertion~\ref{thm:fct:b} is analoguous to the proof of Assertion~\ref{thm:fct:a}.
\Cref{rmk:fct} is a direct consequence of the proofs of the
$\Rightarrow$ directions.
\end{proof}

\begin{lemma}
\label{lem:plane}
Suppose that $F\c \C^2\to \C^4$
is a form of degree $d\in\Z_{>0}$
such that
\[
F(x,y)=\vc_0\cdot y^d+ \vc_1\cdot x\cdot y^{d-1} +\cdots+ \vc_d\cdot x^d,
\]
with coefficients $\vc_i\in\C^4$ for all $i\in I:=\{0,\ldots,d\}$
and let $V\subset \C^4$ be some subspace of dimension at most two.
\begin{claims}
\item\label{lem:plane:a}
If $F(\C^2)=V$, then $\vc_i\in V$ for all $i\in I$.

\item\label{lem:plane:b}
If $\vc_i\in V$ for all $i\in I$, then $F(\C^2)\subseteq V$.
\end{claims}
\end{lemma}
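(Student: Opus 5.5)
Assertion~\ref{lem:plane:b} is immediate. For every $(x,y)\in\C^2$, the value $F(x,y)=\sum_{i\in I}\vc_i\cdot x^i\cdot y^{d-i}$ is a $\C$-linear combination of the coefficients $\vc_i$. A subspace is closed under linear combinations, so $F(x,y)\in V$, and hence $F(\C^2)\subseteq V$. The bound $\dim V\leq 2$ plays no role here.

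For Assertion~\ref{lem:plane:a}, I will recover each coefficient $\vc_i$ as a linear combination of finitely many values of $F$. Evaluating at $y=1$ gives $F(x,1)=\sum_{i\in I}\vc_i\cdot x^i$, which is a polynomial in $x$ of degree at most $d$ with vector coefficients. Choose $d+1$ pairwise distinct numbers $x_0,\ldots,x_d\in\C$. The Vandermonde matrix $M:=(x_k^i)_{k,i\in I}$ is invertible. Reading the identities $F(x_k,1)=\sum_i x_k^i\,\vc_i$ coordinatewise in $\C^4$, we obtain $\vc_i=\sum_{k\in I}(M^{-1})_{ik}\cdot F(x_k,1)$ for all $i\in I$. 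Each $F(x_k,1)$ lies in $F(\C^2)=V$, and $V$ is a subspace, so every $\vc_i$ lies in $V$.

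Equivalently, one can use Lagrange interpolation or repeated finite differences to isolate the coefficients. The only point that needs care is that $V$ is a linear subspace, so that it is closed under the linear combinations produced by $M^{-1}$. This holds by hypothesis, and there is no real obstacle. Again the assumption $\dim V\leq 2$ is not used. It matters only for the later application, where $\P(V)$ is a point or a line such as a generator $\cL_\alpha$.
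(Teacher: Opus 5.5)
Your proof is correct, and it takes a different route from the paper for Assertion~\ref{lem:plane:a}.

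The paper argues analytically. It writes $F(x,y)=f(x,y)\cdot\vv+g(x,y)\cdot\vw$ with $V=\langle\vv,\vw\rangle_\C$ and smooth scalar functions $f,g$. It then recovers the coefficients by differentiation:
\[
i!\cdot\vc_i=(\partial_x^iF)(0,1)=(\partial_x^if)(0,1)\cdot\vv+(\partial_x^ig)(0,1)\cdot\vw\in V.
\]

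Both arguments rest on the same principle: each $\vc_i$ is obtained from $F$ by a linear operation that keeps $V$-valued maps inside $V$. The paper uses the $i$-th $x$-derivative at $(0,1)$. You use an exact finite combination of the values $F(x_k,1)$ via the inverse Vandermonde matrix.

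Your route is purely algebraic. It works over any field with at least $d+1$ elements and needs neither a spanning pair for $V$ nor smoothness of $f,g$. It also makes explicit what the paper's proof shows only implicitly: the inclusion $F(\C^2)\subseteq V$ suffices, and the bound $\dim V\leq 2$ is irrelevant. The paper's version is slightly quicker to write down, and it uses the dimension bound only to fix the vectors $\vv,\vw$.
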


\begin{proof}
Since $F(\C^2)=\langle\vv,\vw\rangle_\C$ for some $\vv,\vw\in\C^4$ we deduce that
\[
F(x,y)=f(x,y)\cdot\vv + g(x,y)\cdot\vw
\]
for some smooth functions $f, g\c\C^2\to\C$.
We take the $i$-th partial derivative of $F$ with respect to the variable~$x$
and evaluate at $(0,1)$ so that
\[
i!\cdot\vc_i=
(\partial_x^i F)(0,1)=
(\partial_x^if)(0,1)\cdot\vv + (\partial_x^ig)(0,1)\cdot\vw\in F(\C^2),
\]
for all $i\in I$.
This concludes the proof for Assertion~\ref{lem:plane:a}.
Assertion~\ref{lem:plane:b} is an immediate consequence of the definitions.
\end{proof}

\begin{proof}[Proof of \Cref{cor:fct}]
Let us write
\[
Q(s,t)=Q_0+Q_1\cdot s+\cdots+Q_d\cdot s^d
\]
with $d\in\Z_{\geq 0}$ and $Q_i\in\H[t]\subset\HC[t]$ for all $i\in I:=\{0,\ldots,d\}$.
Because $\HC$ is as a vector space equal to $\langle 1, \qi, \qj, \qk \rangle_\C$,
we may identify $Q_i(p)\in \HC$ with a vector in~$\C^4$ for all~$i\in I$.
Let $F(x,y):=y^d\cdot Q(x/y,p)$ be the homogenization of $Q(s,p)$.

\ref{cor:fct:a}
Let us first prove the $\Rightarrow$ direction.
By assumption, $[Q(s,p)]\subseteq\cL_\alpha\subset\cN$
and thus $\set{F(x,y)}{x,y\in \C}\subset\C^4$
is a subspace of dimension at most two.
We apply \RL{plane}{a} to $F(x,y)$
and deduce that either $Q_i(p)=0$ or $[Q_i(p)]\in \cL_\alpha$,
for all~$i\in I$.
Hence, $Q_i(p)\cdot Q_j(p)^*=0$ for all $i,j\in I$ by \RL{N}{b}.
It follows from \RL{basic}{b} that
$P\mid Q_i\cdot Q_j^*$, where $P:=(t-p)\cdot(t-\op)$.
The $\Rightarrow$ direction for Assertion~\ref{cor:fct:a} now follows from \Cref{thm:fct}\ref{thm:fct:a}.

Next, we proceed with the $\Leftarrow$ direction.
Notice that $P:=(t-h)\cdot(t-h)^*\in\R[t]$ is a monic quadratic polynomial by \RLS{basic}{a}{basic}{e}.
We know from \Cref{thm:fct}\ref{thm:fct:a} that $P\mid Q_i\cdot Q_j^*$ for all~$i,j\in I$.
Hence, $(Q_i\cdot Q_j^*)(p)=Q_i(p)\cdot Q_j(p)^*=0$ for all~$i,j\in I$ by \RL{basic}{b}.
\RL{N}{c} implies that either $Q_i(p)=0$ or
$[Q_i(p)]\in \cL_\alpha$ for all $i\in I$ and some $[\alpha^*]\in\cN$.
As $\cL_\alpha$ does not contain real points,
it follows that $[Q_i(\op)]\in \cL_\oalpha$ for all $i\in I$.
We apply \RL{plane}{b} to $F(x,y)$ and find that
$[Q(s,p)]\subseteq\cL_\alpha$
and
$[Q(s,\op)]\subseteq\cL_\oalpha$, which concludes the proof of Assertion~\ref{cor:fct:a}.

The proof of Assertion~\ref{cor:fct:b} is analoguous to the proof of Assertion~\ref{cor:fct:a}.
\end{proof}

\begin{proof}[Proof of \Cref{cor:clifford}]
\ref{i}$\Rightarrow$\ref{ii}:
It follows from \RL{N}{a} that either $[Q(s,p)]\subseteq\cL_\alpha$
or $[Q(s,p)]\subseteq\cR_\alpha$ for some $\alpha\in\HC$ \st $\alpha\cdot\alpha^*=0$.
If $[Q(s,p)]\subseteq\cL_\alpha$, then $[Q(s,\op)]\subseteq\cL_\oalpha$
as $[Q(s,p)]$ is non-real.
It follows from \Cref{cor:fct}\ref{cor:fct:a} that $Q=S\cdot T$ for some linear polynomials $S\in\H[s]$ and $T\in\H[t]$.
Similarly, if $[Q(s,p)]\subseteq\cR_\alpha$,
then $Q=T\cdot S$ by \Cref{cor:fct}\ref{cor:fct:b} with $s$ and $t$ interchanged.
We conclude that \ref{ii} holds.

\ref{ii}$\Rightarrow$\ref{iii}:
It follows from \RL{basic}{a} that $Q\cdot Q^*=S\cdot S^*\cdot T\cdot T^*=A\cdot B$
and $A,B$ are quadratic by \RL{basic}{e}. Hence, \ref{iii} holds.

\ref{iii}$\Rightarrow$\ref{i}:
By definition,
\[
[Q]\cap\cN=\P(\set{Q(s,t)}{s,t\in\C \text{ \st } (Q\cdot Q^*)(s,t)=A(s)\cdot B(t)=0}).
\]
Hence, $[Q(s,p)]\subset\cN$ for $p\in\C$ \st $B(p)=0$.
Since $Q(s,p)$ has degree one, we find that $[Q(s,p)]$ is either
a complex line or a complex point.
Moreover, $[Q(s,p)]$ is non-real and thus $p\in\C\setminus\R$.
Similarly, $[Q(q,t)]\subset\cN$ is contained in a complex line
for some $q\in\C\setminus\R$ \st $A(q)=0$.
We conclude that \ref{i} holds.

We concluded the proof as all implications are reached.
\end{proof}

\section{A projective model for M\"obius geometry}
\label{sec:model}

We setup a projective model for M\"obius geometry
and introduce some conventions.

We define a \df{real variety}~$X$ to be a complex irreducible variety together with
an antiholomorphic involution $\sigma_X\c X\to X$
and we denote its real points by
\[
X_\R:=\set{p\in X}{\sigma_X(p)=p}.
\]
Such varieties can always be defined by polynomials with real coefficients.
In what follows,
points, lines, surfaces and projective spaces $\P^n$ are real algebraic varieties
unless explicitly stated otherwise.
A conic is real by default, but may be reducible.
We assume that the real structure $\sigma_{\P^n}\c\P^n\to\P^n$ sends $x$
to $(\overline{x_0}:\ldots:\overline{x_n})$, where $\overline{c}$ denotes the complex conjugate of $c\in \C$.
Maps between real varieties are compatible with $\sigma_X$ by default.

We denote the \df{singular locus} of $X$ by $\Sing X$.
We call a complex line $L\subset X$
a complex \df{double line} if it has multiplicity two in $X$ \citep[page~388]{1977}.

Let $f\c A \dto B\subset\P^n$ be a rational map that is not defined at~$U\subset A$.
We assume that the image~$B\subset\P^n$ is not contained in a hyperplane section.
By abuse of notation we denote $f(A\setminus U)\subseteq B$ by $f(A)$.
We call $f$ a \df{morphism} if it is everywhere defined and thus $U=\varnothing$.
The image of a morphism between projective varieties is Zariski closed \cite[Theorem~II.4.9]{1977}.
We call $f$ \df{biquadratic} if $A=\P^1\times\P^1$ and the components
of $f$ are of bidegree (2,2). Notice that if $f$ is biquadratic, then
its components are of total degree 4.
Components of rational maps are by default defined over $\R$.

The $n$-dimensional \df{M\"obius quadric} is defined as
\[
\S^n:=\set{x\in\P^{n+1}}{-x_0^2+x_1^2+\cdots+x_{n+1}^2=0}.
\]
Let $\aut_\C\S^3$ be defined as the set of complex linear automorphisms
$\varphi\c\P^4\to\P^4$ \st $\varphi(\S^3)=\S^3$.
The \df{M\"obius transformations} are defined as
\[
\aut\S^3:=\set{\varphi\in\aut_\C\S^3}{\varphi\circ\sigma_{\P^4}=\sigma_{\P^4}\circ\varphi}.
\]
The \df{elliptic absolute} is
defined as the smooth quadric
$\cE:=\set{x\in \S^3}{ x_0=0}$.
The \df{Euclidean absolute} is defined as
the quadratic cone $\cU:=\set{x\in \S^3}{x_0-x_4=0}$.
The \df{hyperbolic absolute} is defined as
the smooth quadric $\cH:=\set{x\in \S^3}{x_4=0}$.

The \df{stereographic projection} $\pi\c\S^3\dto\P^3$ sends $x$ to $(x_0-x_4:x_1:x_2:x_3)$.
The \df{central projection}~$\tau\c \S^3\to \P^3$
is the linear projection that sends $(x_0:\ldots:x_4)$
to $(x_1:\ldots:x_4)$. The map $\tau$ is a 2:1 morphism with center $(1:0:\ldots:0)$, ramification locus~$\cE$
and branching locus~$\tau(\cE)=\cN$.

Let $\iota_n\c \R^n\hookrightarrow \P^n$ send $(y_1,\ldots,y_n)$ to $(1:y_1:\ldots:y_n)$.
Its restricted rational inverse $\iota_n^{-1}\c\P_\R^n\dto\R^n$ sends
$(x_0:\ldots:x_n)$ to $(x_1,\ldots,x_n)/x_0$.
If $C\subset\R^n$, then $\bP(C)\subset\P^n$ denotes the Zariski closure of~$\iota_n(C)$.
If $C\subset\P^n$, then $\bR(C)\subset\R^n$ is defined as $\iota_n^{-1}(\set{x\in C_\R}{x_0\neq 0})$.
If $C\subset\R^3$, then $\bS(C)\subset\S^3$ denotes the Zariski closure of
the inverse stereographic projection~$\pi^{-1}(\bP(C))$.
For example, $\bP(S^3)=\S^3$, $\bR(\S^3)=S^3$ and $\bS(\R^3)=\S^3$.
If $\bP(\{a\})=\{b\}$ for $a\in\R^n$, then we write $\bP(a)=b$ instead.
Similarly, if $\bR(\{a\})=\{b\}$ for $a\in\P^n$, then we write $\bR(a)=b$ instead.

We call $X\subset \S^3$
\df{$\lambda$-circled},
\df{Darboux cyclide},
\df{ring cyclide},
\df{Perseus cyclide},
\df{Blum cyclide}, or
\df{CH1 cyclide},
if $X=\bS(Z)$ and $Z\subset\R^3$ is defined as such in \textsection\ref{sec:c}.
We call an irreducible conic $C\subset\S^3$ a circle if $\bR(C)\subset S^3$ is a circle.
We remark that surfaces $Z,Z'\subset \R^3$
are M\"obius equivalent if and only if
$\bS(Z)=\nu(\bS(Z'))$ for some M\"obius transformation~$\nu\in\aut\S^3$.

For all $\alpha\in\P^1$, we call
$\set{(\alpha,y)\in\P^1\times\P^1}{y\in\P^1}$
the \df{left fiber associated to $\alpha$}
and
$\set{(x,\alpha)\in\P^1\times\P^1}{x\in\P^1}$
the \df{right fiber associated to $\alpha$}.
We remark that a left fiber and right fiber are of bidegrees~$(1,0)$ and~$(0,1)$, \resp.
If $\varphi\c \P^1\times\P^1\dto X$ is a rational map
and $F$ is a left or right fiber,
then we call the Zariski closure of $\varphi(F)$
a \df{left image} and \df{right image}, \resp.
A left or right image is \df{associated to $\alpha$}
if its corresponding left or right fiber is as such.

A rational map $\varphi\c\P^1\times\P^1\to X\subset\S^3$
is $H$-\df{decomposable}
for $H\in\{\cE,\cU,\cH\}$
if there exist
complex lines $L,\oL,R,\oR\subset X$
\st
\[
X\cap H=L\cup\oL\cup R\cup \oR,
\]
where $L,\oL$ are complex conjugate left images
and $R,\oR$ are complex conjugate right images.

\section{Biquadratic birational morphisms}
\label{sec:morphism}

The goal of this section is to show \Cref{prp:EU},
namely that a biquadratic birational morphism
$\varphi\c\P^1\times\P^1\to X\subset \S^3$ is up to M\"obius equivalence
either $\cE$-decomposable or $\cU$-decomposable.

\begin{lemma}
\label{lem:Y}
Suppose that $\varphi\c\P^1\times\P^1\to X\subset\P^n$ is a biquadratic birational morphism
and let $F,F'\subset\P^1\times\P^1$ be complex curves of bidegrees $(a,b)$ and $(a',b')$,
\resp.
\begin{claims}
\item\label{lem:Y:A}
If $F$ and $F'$ meet transversally in finitely many points,
then $|F\cap F'|=a\cdot b'+a'\cdot b$.

\item\label{lem:Y:B}
If $\varphi(F)$ is a hyperplane section, then $(a,b)=(2,2)$.

\item\label{lem:Y:C}
$\deg X=8$.

\item\label{lem:Y:D}
$\varphi(F)$ is not a complex point.

\item\label{lem:Y:E}
If a general fiber of $\varphi|_F\c F\to \varphi(F)$ has cardinality $d\in\Z_{>0}$,
then
\[
\deg \varphi(F)=2\cdot\frac{a+b}{d}.
\]
\end{claims}
\end{lemma}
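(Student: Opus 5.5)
The plan is to work throughout with the intersection form on $\operatorname{Pic}(\P^1\times\P^1)\cong\Z^2$, generated by the classes $\ell_1$ and $\ell_2$ of a right fiber (bidegree $(1,0)$) and a left fiber (bidegree $(0,1)$). These satisfy $\ell_1^2=\ell_2^2=0$ and $\ell_1\cdot\ell_2=1$. The key input is that, since $\varphi$ is a biquadratic morphism into $\P^n$, we have $\varphi^*\mathcal{O}(1)=\mathcal{O}(2,2)$, so the pullback of a hyperplane has class $2\ell_1+2\ell_2$.

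For \ref{lem:Y:A}, a curve of bidegree $(a,b)$ has class $a\ell_1+b\ell_2$. By bilinearity, $(a\ell_1+b\ell_2)\cdot(a'\ell_1+b'\ell_2)=ab'+a'b$. Transversal intersection in finitely many points means every local intersection multiplicity is $1$, so the intersection number equals $|F\cap F'|$. This is the standard Bézout-type count on $\P^1\times\P^1$, which I would cite from Hartshorne (Example V.1.4.3).

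For \ref{lem:Y:B}, the preimage of a hyperplane $H$ is the zero locus of a nonzero linear combination of the components of $\varphi$. This is a bihomogeneous form of bidegree $(2,2)$, which does not vanish identically because $X$ is not contained in a hyperplane. Since $\varphi$ is birational, $F=\varphi^{-1}(\varphi(F))$ as curves, up to contracted components. But \ref{lem:Y:D} rules out contracted components, so $F$ is the full divisor $\varphi^*H$ and has bidegree $(2,2)$. I expect the delicate point to be how multiplicities are accounted for: I would interpret $F$ as the pullback divisor with its multiplicities, which is consistent with the paper's later use of double lines.

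For \ref{lem:Y:D}, suppose $\varphi(F)$ were a point. Every component of $\varphi$ restricted to $F$ would then be proportional to a single bidegree-$(2,2)$ form $g$. Choose a hyperplane $H$ through that point whose pullback divisor $D$ of class $(2,2)$ does not contain $F$. Then $D\cap F$ is the whole of $F$ pulled back, which is impossible, since $D\cdot F=2a+2b>0$ forces a finite intersection. More cleanly, the restriction $\varphi^*\mathcal{O}(1)|_F$ has degree $2(a+b)>0$, because $\mathcal{O}(2,2)$ is ample. A morphism that contracts $F$ to a point would make $\varphi^*\mathcal{O}(1)|_F$ trivial, which is a contradiction.

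For \ref{lem:Y:E}, the projection formula gives $\deg\varphi^*\mathcal{O}(1)|_F=d\cdot\deg\varphi(F)$. The left side equals $(2\ell_1+2\ell_2)\cdot F=2a+2b$ by \ref{lem:Y:A}, and dividing by $d$ gives the formula.

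For \ref{lem:Y:C}, the same argument applied to the surface gives $\deg X\cdot\deg\varphi=(2\ell_1+2\ell_2)^2=8$. Since $\varphi$ is birational, $\deg\varphi=1$, so $\deg X=8$. The step that needs the most care is the use of the morphism property: because $\varphi$ has no base points, a general pair of hyperplane sections pulls back to two $(2,2)$ curves meeting transversally in $8$ points, and birationality makes these correspond bijectively to $\deg X$ points of $X$.
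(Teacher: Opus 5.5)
Your proposal is correct and takes essentially the paper's route: intersection numbers on $\P^1\times\P^1$ against the $(2,2)$ class of a pulled-back hyperplane. You phrase (c), (d) and (e) through $\varphi^*\mathcal{O}(1)$ and the projection formula, where the paper counts the points of $F\cap G$ for the preimage $G$ of a general hyperplane. Two loose remarks, both harmless: in (b), birationality alone does not give $F=\varphi^{-1}(\varphi(F))$ when $X$ is non-normal, so (b) should be read, as the paper does and as your pullback-divisor reading intends, with $F$ the zero set of $\alpha_0\varphi_0+\cdots+\alpha_n\varphi_n$; and in your first attempt at (d) the hyperplane must avoid $\varphi(F)$ rather than pass through it, though your line-bundle version of (d) is already correct.
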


\begin{proof}
Well-known, but we include some details for the novice.
It follows from \citep[Exercise~I.2.15a and Example~V.1.9.2]{1977}
that the group~$N(\P^1\times\P^1)$ of divisor classes up to numerical equivalence
is isomorphic to
$\bas{\l_0,\l_1}_\Z$, where $\l_0^2=\l_1^2=0$ and $\l_0\cdot\l_1=1$.
Moreover, the classes of $F$ and $F'$ are
$[F]=a\,\l_0+b\,\l_1$ and $[F']=a'\,\l_0+b'\,\l_1$, \resp.

\ref{lem:Y:A}
It follows from \citep[Theorem~V.1.1]{1977} that
$|F\cap F'|=[F]\cdot [F']=a\cdot b'+a'\cdot b$.

\ref{lem:Y:B}
Let $(\varphi_0,\ldots,\varphi_n)$ denote the bidegree~(2,2) components of~$\varphi$.
Since $\varphi(F)$ is a hyperplane section,
we find that
$F=\set{(x,y)\in\P^1\times\P^1}{\alpha_0\,\varphi_0(x,y)+\cdots+\alpha_n\,\varphi_n(x,y)=0}$
for some $\alpha\in\P^n$. Hence, $(a,b)=(2,2)$ as was to be shown.

Let $H,H'\subset\P^n$ be general hyperplanes
and $G,G'\subset\P^1\times\P^1$ complex curves \st
$\varphi(G)=X\cap H$ and $\varphi(G')=X\cap H'$.
Notice that $[G]=[G']=2\,\l_0+2\,\l_1$ by Assertion~\ref{lem:Y:B}.

\ref{lem:Y:C}
By definition, $\deg X=|X\cap H\cap H'|$ (see \citep[\textsection I.7]{1977}).
Since $\varphi$ is a birational morphism and both $H$ and $H'$ are general,
it follows from Assertion~\ref{lem:Y:A} that $\deg X=|G\cap G'|=[G]\cdot [G']=8$.

\ref{lem:Y:D}
Suppose by contradiction that $\varphi(F)$ is a point.
In this case, $\varphi(F)\notin H$
and thus $F\cap G=\varnothing$.
We arrive at a contradiction
since $[F]\cdot [G]=0$ by Assertion~\ref{lem:Y:A} which implies that $[F]=0$.

\ref{lem:Y:E}
By definition, $\deg \varphi(F)=|\varphi(F)\cap H|$.
Since $H$ is general, we may assume that $F$ and $G$ meet transversally in finitely many points.
The map $\varphi|_F$ is a $d:1$ morphism and thus
\[
\deg \varphi(F)=\tfrac{1}{d}\cdot|F\cap G|=\tfrac{1}{d}\cdot[F]\cdot[G]=2\cdot\frac{a+b}{d}
\]
by Assertion~\ref{lem:Y:A}.
\end{proof}

\begin{lemma}
\label{lem:nopoint}
If $\varphi\c\P^1\times\P^1\to X\subset\S^3$ is a biquadratic birational morphism,
then the complex map~$\varphi(p,\_)\c\P^1\to C_p\subset X$ is for all $p\in \P^1$ either a
\begin{Mlist}
\item complex 1:1 morphism and $C_p$ is a complex irreducible conic, or
\item complex 2:1 morphism and $C_p$ is a complex double line in $X$.
\end{Mlist}
\end{lemma}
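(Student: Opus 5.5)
The plan is to apply \Cref{lem:Y} to the left fiber $F:=\{p\}\times\P^1$, which has bidegree $(1,0)$, and then bound the degree of its image. By \RL{Y}{D}, $C_p=\varphi(F)$ is not a complex point. Hence the complex morphism $\varphi(p,\_)\colon\P^1\to C_p$ is finite onto the complex curve $C_p$, with general fibers of some cardinality $d\in\Z_{>0}$. Then \RL{Y}{E} with $(a,b)=(1,0)$ gives $\deg C_p=2/d$. Therefore either $d=1$ and $\deg C_p=2$, or $d=2$ and $\deg C_p=1$. (The case $d=1$ includes a non-reduced image only in the scheme-theoretic sense, so we take $C_p$ reduced as the Zariski closure of the image.)

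For $d=1$: $C_p$ is the birational image of $\P^1$ of degree two, so it spans a plane and is an irreducible conic. It is not a pair of lines, because it is the image of an irreducible curve. The map is birational onto $C_p$. Since $\P^1$ is the normalization of a smooth conic and an irreducible plane conic is smooth, the map is 1:1.

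For $d=2$: $C_p$ is a complex line $L\subset X$ and the map is 2:1. I must show that $L$ is a double line of $X$, meaning $X$ has multiplicity two along $L$. Here is the approach. $X\subset\S^3\subset\P^4$ is a surface of degree $8$ by \RL{Y}{C}. Take a general hyperplane $H$ containing $L$. Its preimage $G$ under $\varphi$ has class $2\ell_0+2\ell_1$ by \RL{Y}{B}, and $G$ contains $F$ with some multiplicity $m$. For a general $H\supset L$, the multiplicity of $X$ along $L$ equals the multiplicity with which $L$ appears in $X\cap H$. Since $\varphi$ is birational, and an isomorphism over a dense open set of $X$ meeting $L$ in the relevant sense, this equals $m$ times the degree of $\varphi|_F$ onto $L$, that is, $2m\geq 2$. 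For the upper bound, note that $G-mF$ has class $(2-m)\ell_0+2\ell_1$, so $m\le 2$. Moreover, if $m=2$ then the rest of $X\cap H$ would be the image of a curve of class $2\ell_1$, of degree $4$. Degree counting gives $8=2\cdot 2\cdot1+4$, which is consistent, so I would instead argue more locally. The multiplicity of $X$ at a general point $x\in L$ equals the number of local analytic branches through $x$ counted with their multiplicities. The two preimages $(p,y_1),(p,y_2)$ of $x$ give two smooth branches, because $\varphi$ is an immersion at general points of $F$: otherwise a whole curve would be contracted or ramify, which contradicts birationality and the bidegree count. Since the two preimages are distinct and the general fiber over $X$ is a single point elsewhere, $X$ has exactly two smooth sheets along $L$, so its multiplicity is exactly two.

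The main obstacle is this last step: rigorously showing that $X$ has multiplicity exactly two along $L$. In particular, one must justify that $\varphi$ is unramified at general points of $F$ and that no other fiber curve also maps onto $L$, since either would raise the multiplicity. To exclude another curve $F'$ with $\varphi(F')=L$, I would use birationality: two curves in $\P^1\times\P^1$ mapping onto the same line would force $\varphi^{-1}$ to fail to be defined along all of $L$, contradicting that $\varphi^{-1}$ is defined on a dense open subset which can be chosen to meet $L$ after the standard argument. I would first attempt the intersection-number version, using a hyperplane through $L$ and $m\le 2$, and fall back on the branch count if it is cleaner.
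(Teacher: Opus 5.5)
Your first step is exactly the paper's whole proof. \RL{Y}{D} excludes a point, and \RL{Y}{E} with $a+b=1$ gives $\deg C_p=2/d$, so $C_p$ is either a conic covered once or a line covered twice. The paper stops there and reads off ``double line'' from the fact that the line is covered twice. What it uses later, for instance the B\'ezout count in the proof of \Cref{prp:EU}, is only that this line has multiplicity at least two in $X$. Your branch picture does give that lower bound, once you add one remark. The images of small neighbourhoods of the two preimages $(p,y_1),(p,y_2)$ of a general $x\in L$ are two \emph{distinct} irreducible surface germs. If they coincided, $\varphi$ would be at least 2:1 over an open subset of $X$, contradicting birationality. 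Each germ has multiplicity at least one, so $X$ has multiplicity at least two along $L$.

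The gap is in the part that goes beyond the paper, namely multiplicity \emph{exactly} two. Two steps fail.

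First, you claim that $\varphi$ is an immersion at general points of $F$, since otherwise this ``contradicts birationality and the bidegree count''. That is false as a general principle. A birational morphism can be ramified along a curve it does not contract; think of $(u,v)\mapsto(u^2,u^3,v)$ onto a cuspidal edge. Bidegree bookkeeping does not detect this. Concretely, $(x_0^2y_0^2+x_0x_1y_0y_1 : x_0^2y_1^2 : x_1^2y_0^2 : x_1^2y_0y_1 : x_1^2y_1^2)$ is a base-point-free, birational, biquadratic morphism onto an octic in $\P^4$. It sends the left fibre $\{x_1=0\}$ two-to-one onto a line and is ramified along that fibre. Each of the two branches is then cuspidal, so the line has multiplicity $4$.

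Second, your exclusion of a further curve $F'$ with $\varphi(F')=L$ is vacuous. Every general point of $L$ already has two preimages on $F$, so $\varphi^{-1}$ is undefined along all of $L$ anyway. Hence no dense open set on which $\varphi^{-1}$ is defined can meet $L$, and nothing is contradicted. The same hidden assumption sits in your claim that $L$ occurs in $X\cap H$ with coefficient $2m$: other components of $\varphi^{-1}(H)$ may also map onto $L$. This really happens for octics in $\P^4$. For $(x_0^2y_0^2+x_1^2y_1^2 : x_0^2y_1^2+x_1^2y_0^2 : x_0x_1y_0^2 : x_0x_1y_0y_1 : x_0x_1y_1^2)$, both fibres $\{x_0=0\}$ and $\{x_1=0\}$ cover the same line.

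So ``exactly two'' needs input from $X\subset\S^3$, which your proposal does not supply. For the ramification issue it can be done as follows. Take $F=\{s=0\}$ and write $\varphi=A(y)+s\,B(y)+s^2C(y)$, where $A(y)$ lies in the totally isotropic $2$-dimensional subspace $\hat L\subset\C^5$ over $L$. Ramification along $F$ means $B(y)\in\hat L$. Let $q$ be the quadratic form defining $\S^3$. The coefficients of $q\circ\varphi=0$ then force one of two outcomes:
\begin{itemize}
\item $B=\mu A$, so $\varphi$ factors through a double cover of the first $\P^1$ and is not birational;
\item $C(y)\in\hat L$, so the image of $\varphi$ is $L$.
\end{itemize}
The question of other curves over $L$ needs a separate argument that also uses the quadric. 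Otherwise, restrict your claim to multiplicity at least two, which is all the paper's argument provides and all it later needs.
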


\begin{proof}
By definition, $C_p=\varphi(F)$ for the left fiber $F$ associated to $p$.
We know from \RL{Y}{D} that $\varphi(F)$ is not a point.
Since $F$ has bidegree~(0,1), it follows from \RL{Y}{E}
that $\deg C_p$ is equal to either 2 or 1 so that $\varphi|_F$ is a 1:1 and 2:1 morphism, \resp.
This concludes the proof, as $C_p$ must be an irreducible conic and complex double line, \resp.
\end{proof}

Recall from \Cref{sec:model} that
$\pi\c\S^3\dto\P^3$ denotes the stereographic projection
sending $x$ to $(x_0-x_4:x_1:x_2:x_3)$.

\begin{lemma}
\label{lem:L}
Suppose that $\varphi\c\P^1\times\P^1\to X\subset \S^3$ is a biquadratic birational morphism.
If there exist $p\in\P^1$
and
$u_0,\ldots,u_3,v_0,\ldots,v_3\in\C$
such that
\[
(\pi\circ\varphi)(p,y)=
(
u_0\,y_1^2+v_0\,y_0^2:
u_1\,y_1^2+v_1\,y_0^2:
u_2\,y_1^2+v_2\,y_0^2:
u_3\,y_1^2+v_3\,y_0^2
)
,
\]
then $\set{\varphi(p,y)}{y\in\P^1}$ is a complex line in~$X$.
\end{lemma}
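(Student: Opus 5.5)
The plan is to show that the map $y\mapsto(\pi\circ\varphi)(p,y)$ factors through the degree-two map $y\mapsto(y_0^2:y_1^2)$. Then $\varphi(p,\_)$ is not 1:1, and \Cref{lem:nopoint} turns this into the statement that $C_p$ is a complex double line.

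First, observe that the displayed expression is a linear map applied to $(y_1^2, y_0^2)$. Concretely, $(\pi\circ\varphi)(p,y)=\ell(y_1^2:y_0^2)$, where $\ell\colon\P^1\dto\P^3$ is the linear map sending $(z_1:z_0)$ to $(u_0z_1+v_0z_0:\ldots:u_3z_1+v_3z_0)$. Its image is therefore contained in the complex line spanned by $(u_0:\ldots:u_3)$ and $(v_0:\ldots:v_3)$, or is a point if these are dependent. Moreover, the parametrization is invariant under $y=(y_0:y_1)\mapsto(y_0:-y_1)$. Hence $\pi\circ\varphi(p,\_)$ is either constant or generically 2:1 onto its image.

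Next, I would pass from $\pi\circ\varphi$ back to $\varphi$. The stereographic projection $\pi$ is birational from $\S^3$ onto $\P^3$, with inverse given by quadrics. It is an isomorphism away from the tangent cone $\cU=\S^3\cap\{x_0=x_4\}$ at its center $(1:0:0:0:1)$; the cone $\cU$ is contracted to the plane $\{x_0=0\}$. The main obstacle is the case $C_p\subset\cU$, where one cannot simply transport injectivity through $\pi$. Away from this case, $\pi|_{C_p}$ is birational onto its image. Then $\varphi(p,\_)$ factors generically through $y\mapsto(y_0^2:y_1^2)$, so it is not generically injective. It is also not constant by \RL{Y}{D}. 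By \Cref{lem:nopoint}, $\varphi(p,\_)$ is a 2:1 morphism onto a complex double line, hence $\set{\varphi(p,y)}{y\in\P^1}$ is a complex line in $X$.

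For the obstacle case $C_p\subset\cU$, I would argue as follows. By \Cref{lem:nopoint}, $C_p$ is either a line (done) or an irreducible conic; suppose it is an irreducible conic. The components $x_1,x_2,x_3$ and $x_0-x_4$ of $\varphi(p,y)$ are quadratic forms in $y$, and by hypothesis they are all combinations of $y_0^2,y_1^2$ with no $y_0y_1$ term. On $\S^3$ we have $x_0^2-x_4^2=x_1^2+x_2^2+x_3^2$, so $(x_0-x_4)(x_0+x_4)$ is even in $y_1$. If $x_0-x_4\not\equiv0$ on $C_p$, then $x_0+x_4$ is also even in $y_1$, since it is the quotient of an even form by an even form. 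Then every coordinate of $\varphi(p,y)$ is invariant under $y_1\mapsto -y_1$, contradicting that $\varphi(p,\_)$ is 1:1.

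If instead $x_0-x_4\equiv0$ on $C_p$, then $x_1^2+x_2^2+x_3^2\equiv0$. The forms $x_1,x_2,x_3$ lie in $\langle y_0^2,y_1^2\rangle$ and are not all zero, since otherwise $C_p$ would be the single point $(1:0:0:0:1)$, contradicting \RL{Y}{D}. Write $x_k=u_k y_1^2+v_k y_0^2$. The vanishing forces $\sum u_k^2=\sum v_k^2=\sum u_kv_k=0$, so $\langle u,v\rangle$ is a totally isotropic space for the form $x_1^2+x_2^2+x_3^2$. A nondegenerate ternary form has no 2-dimensional isotropic subspace, so $u$ and $v$ are proportional. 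Hence $(x_1:x_2:x_3)$ is a fixed point, and $\varphi(p,y)$ lies in a plane spanned by the center and a fixed isotropic vector. That plane meets $\S^3$ in a line, so $C_p$ is contained in a line, contradicting irreducibility of the conic. This case analysis is the step I would check most carefully.
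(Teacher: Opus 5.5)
Your proof is correct, but it takes a different route from the paper. The paper never splits according to whether $C_p\subset\cU$. It assumes $C_p$ is an irreducible conic and observes that $\pi(C_p)$ must then be a line, so $C_p$ lies in a plane through the center $c$ on which $\pi|_{C_p}$ is 2:1. For a general point of $\pi(C_p)$, the line through its two preimages $r,s\in C_p$ passes through $c$, so it meets $\S^3$ in three points and lies in $\S^3$, hence in the tangent cone $\cU$. These lines sweep out the plane, which would then lie in $\cU$, and that is impossible.

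You instead pull the $y_1\mapsto -y_1$ symmetry back through $\pi$, using that $\pi$ is an isomorphism off $\cU$, and you treat $C_p\subset\cU$ separately by the isotropy computation. The paper's argument is shorter and uniform. Yours is more explicit and purely algebraic; in fact your evenness trick via $(x_0-x_4)(x_0+x_4)=x_1^2+x_2^2+x_3^2$ alone already settles the case $C_p\not\subset\cU$. Note that $C_p\not\subset\cU$ is exactly the condition $x_0-x_4\not\equiv 0$, so inside your ``obstacle case'' that first subcase is vacuous.

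Two points to tighten:
\begin{itemize}
\item The hypothesis is an equality of rational maps, so $(x_0-x_4,x_1,x_2,x_3)$ equals the given tuple only up to a nonzero constant, and only when $\pi(C_p)$ is a line. In that case the given tuple has no common factor, so comparing degrees forces proportionality. If $\pi(C_p)$ is a point, then $C_p$ lies on a line through $c$ and cannot be a conic.
\item The span of $c$ and $(0:w_1:w_2:w_3:0)$ with $w$ isotropic is a projective line lying entirely in $\S^3$, not a plane meeting $\S^3$ in a line.
\end{itemize}
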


\begin{proof}
Let $L:=\set{\varphi(p,y)}{y\in\P^1}\subset X$.
We observe that
\[
(\pi\circ\varphi)(p,(y_0:y_1))=(\pi\circ\varphi)(p,(y_0:-y_1))
\]
and thus $(\pi\circ\varphi)(p,\_)\c \P^1\dto \pi(L)$
is either $2:1$ or $\infty:1$.
This implies that $\pi(L)$ is a complex line and a complex point, \resp.
It follows from \Cref{lem:nopoint} that $L$ is
either a complex irreducible conic, or a complex line.
Suppose by contradiction that $L$ is a complex irreducible conic.
In this case $\pi(L)$ must be a complex line and thus $L$ lies in a complex plane
that passes through the center of stereographic projection $\pi\c \S^3\dto \P^3$.
Let $q\in\pi(L)$ be general so that $\pi^{-1}(q)$ consists of two complex points~$r,s\in L$.
The complex line through $r$ and $s$ meets the center of $\pi$ and
is therefore contained in $\S^3$ by B\'ezout's theorem.
All the complex lines in~$\S^3$ that pass through
the center of $\pi$ lie in the Euclidean absolute $\cU$.
We arrived at a contradiction as $\cU$ does not contain complex planes.
It follows that the preimage $L$ of $\pi(L)$
must be a complex line as was to be shown.
\end{proof}

\begin{lemma}
\label{lem:coord}
If $\varphi\c\P^1\times\P^1\dto\P^{n+1}$ is a biquadratic rational map,
then there exists $p\in\P^1$ and $\nu\in\aut_\C\S^n$ \st
\[
(\pi\circ\nu\circ \varphi)(p,y)=(u_0\,y_1^2+v_0\,y_0^2:\cdots:u_n\,y_1^2+v_n\,y_0^2),
\]
where $u_i,v_i\in \C$ for all $0\leq i\leq n+1$.
\end{lemma}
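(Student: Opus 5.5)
The plan is to reduce the statement to choosing a single isotropic vector. Write the biquadratic map as
\[
\varphi(x,y)=A(x)\,y_1^2+B(x)\,y_0y_1+C(x)\,y_0^2,
\]
where $A,B,C\c\C^2\to\C^{n+2}$ are vector-valued binary forms of degree two in $x=(x_0:x_1)$. The stereographic projection $\pi$ sends $z$ to $(z_0-z_{n+1}:z_1:\ldots:z_n)$. It is linear with kernel spanned by the vector $c:=(1,0,\ldots,0,1)$, and $[c]$ is the center of $\pi$, which lies on $\S^n$. Hence, for a fixed $p$, the map $(\pi\circ\nu\circ\varphi)(p,y)$ has no $y_0y_1$ term exactly when the middle coefficient $\nu(B(p))$ is a multiple of $c$, possibly zero. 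Once that holds, the components are of the required form $u_i\,y_1^2+v_i\,y_0^2$, with $u:=\pi(\nu(A(p)))$ and $v:=\pi(\nu(C(p)))$ read as vectors.

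So the task is to find $p\in\P^1$ and $\nu\in\aut_\C\S^n$ with $\nu(B(p))\in\langle c\rangle_\C$. Let $q(z):=-z_0^2+z_1^2+\cdots+z_{n+1}^2$ be the quadratic form defining $\S^n$.

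\begin{claims}
\item Consider $q(B(x))$. It is a binary form of degree $4$ in $x$, so either it vanishes identically or it has a root $p\in\P^1$ over $\C$ by the fundamental theorem of algebra. In both cases there is a $p$ with $q(B(p))=0$.
\item If $B(p)=0$, take $\nu$ to be the identity and we are done.
\item Otherwise $[B(p)]$ is a complex point of $\S^n$. Since $q$ is a nondegenerate complex quadratic form, Witt's theorem, or an explicit Gram--Schmidt construction of a hyperbolic pair, gives an element of $\mathrm{O}(n+2,\C)$ that sends the isotropic vector $B(p)$ to the isotropic vector $c$. Its projectivization is some $\nu\in\aut_\C\S^n$, and then $\nu(B(p))=c$.
\end{claims}

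No reality condition is needed, because $\nu$ is only required to lie in $\aut_\C\S^n$ and not in $\aut\S^3$. Likewise $\varphi$ need not map into $\S^n$ for this argument.

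The only delicate point is interpreting the composition as a rational map. The curve $\nu(\varphi(p,\_))$ might pass through, or even be constant at, the center $[c]$. In that case the components of $\pi\circ\nu\circ\varphi(p,\_)$ share a common factor, and the statement should be read on the level of the quadratic forms before cancelling. Since $\pi$ is linear, the identity above holds coefficientwise regardless.

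I also expect the index range $0\le i\le n+1$ in the statement to be a typo for $0\le i\le n$.
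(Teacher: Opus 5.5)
Your proposal is correct and is essentially the paper's proof. Both split $\varphi$ by the monomials in $y$ and then find $p$ at which the middle coefficient vector is isotropic. The paper does this by intersecting the curve parametrized by $\beta$ with $\S^n$ via B\'ezout; you take a root of the quartic binary form $q(B(x))$, which is the same fact and also covers the degenerate cases directly. Both then send that point to the center $(1:0:\cdots:0:1)$ of $\pi$ by a complex M\"obius transformation.

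Your remark about the index range is also right: $0\le i\le n+1$ should read $0\le i\le n$.
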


\begin{proof}
Since $\varphi=(\varphi_0:\ldots:\varphi_{n+1})$ is biquadratic,
there exist quadratic forms $\alpha_i,\beta_i,\gamma_i\in \C[x_0,x_1]$ \st
for all $i\leq 0\leq n+1$,
\[
\varphi_i(x,y)=\alpha_i(x)\,y_1^2+\beta_i(x)\,y_0y_1 + \gamma_i(x)\,y_0^2.
\]
We assume that $(\beta_0(p),\ldots,\beta_{n+1}(p))\neq (0,\ldots,0)$ for all $p\in\P^1$,
otherwise there is nothing to proof.
Therefore,
$\beta(x):=(\beta_0(x):\ldots:\beta_{n+1}(x))$ parametrizes a curve~$C\subset \P^{n+1}$.
By B\'ezout's theorem, there exists $p\in\S^n\cap C$, which implies that
\[
-\beta_0(p)^2+\beta_1(p)^2+\cdots+\beta_{n+1}(p)^2=0.
\]
Hence, there exists a complex M\"obius transformation $\nu\in\aut_\C\S^n$
\st
\[
\nu(\beta(p))=(1:0:\cdots:0:1).
\]
We conclude that $(\pi\circ\nu\circ \varphi)(p,y)$ is as asserted.
\end{proof}

\begin{lemma}
\label{lem:line}
If $\varphi\c\P^1\times\P^1\to X\subset \S^3$ is a biquadratic birational morphism,
then there exists $p\in\P^1$ such that $\set{\varphi(p,y)}{y\in\P^1}$ is
a complex line in~$X$.
\end{lemma}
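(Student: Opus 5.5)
The plan is to combine \Cref{lem:coord} with \Cref{lem:L}. The complex linear map $\nu$ will be used only as a change of coordinates, and the conclusion does not depend on those coordinates. The reason is that being a complex line is invariant under complex linear automorphisms of $\P^4$.

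First, I will apply \Cref{lem:coord} with $n=3$ to the biquadratic map $\varphi\c\P^1\times\P^1\to X\subset\S^3\subset\P^4$. This gives $p\in\P^1$ and $\nu\in\aut_\C\S^3$ such that $(\pi\circ\nu\circ\varphi)(p,y)$ has the even form $(u_0\,y_1^2+v_0\,y_0^2:\cdots:u_3\,y_1^2+v_3\,y_0^2)$.

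Next, I will set $\psi:=\nu\circ\varphi\c\P^1\times\P^1\to\nu(X)\subset\S^3$. Because $\nu$ is a linear automorphism of $\P^4$ preserving $\S^3$, the map $\psi$ is again a biquadratic birational morphism onto the surface $\nu(X)\subset\S^3$. Its components are linear combinations of those of $\varphi$, and it is everywhere defined since $\nu$ is invertible. There is one subtlety. \Cref{lem:L} and the lemmas it relies on (\Cref{lem:nopoint}, \Cref{lem:Y}) are phrased for real varieties, whereas $\nu$ may be non-real. I will check that their proofs use only complex intersection theory and the geometry of $\S^3$, $\pi$ and $\cU$, none of which needs a real structure. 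So they apply verbatim to $\psi$ viewed as a complex map.

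Applying \Cref{lem:L} to $\psi$ then shows that $\{\psi(p,y)\mid y\in\P^1\}$ is a complex line in $\nu(X)$. Since $\nu^{-1}$ is linear, $\{\varphi(p,y)\mid y\in\P^1\}=\nu^{-1}(\{\psi(p,y)\mid y\in\P^1\})$ is a complex line in $X$, which is the claim.

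The main obstacle is the degenerate case inside \Cref{lem:coord}. There the middle coefficient vector $\beta(p)$ vanishes, and the proof of that lemma dismisses this case with ``nothing to prove''. In that case $\varphi(p,y)=\alpha(p)\,y_1^2+\gamma(p)\,y_0^2$ without applying any $\nu$. I will handle it directly: the left image is then parametrized by $y_1^2,y_0^2$, so $\varphi(p,\_)$ factors through the 2:1 map $y\mapsto(y_0^2:y_1^2)$. The image therefore lies in the span of $\alpha(p)$ and $\gamma(p)$, which is a line or a point. \Cref{lem:Y}\ref{lem:Y:D} excludes a point, so the image is a complex line. This agrees with \Cref{lem:nopoint}, since the map is 2:1 onto a double line.
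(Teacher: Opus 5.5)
Your proposal is correct and follows the paper's proof: apply \Cref{lem:coord} to obtain the even form after a complex automorphism $\nu\in\aut_\C\S^3$, invoke \Cref{lem:L} for $\nu\circ\varphi$, and transport the resulting line back with $\nu^{-1}$, using that linear automorphisms preserve complex lines. Your extra check that the supporting lemmas use no real structure (needed because $\nu$ may be non-real) is a sound addition that the paper leaves implicit; the case $\beta(p)=0$, however, is not a real gap, since there \Cref{lem:coord} already holds with $\nu$ the identity (though your direct argument for it is also fine).
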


\begin{proof}
We know from \Cref{lem:coord} that $\varphi$ satisfies the hypothesis
of \Cref{lem:L} up to compositions with complex automorphisms in $\aut_\C\S^3$.
As elements of $\aut_\C\S^3$ send complex lines to complex lines
the proof is concluded by \Cref{lem:L}.
\end{proof}

\begin{lemma}
\label{lem:S3}
If $X\subset\S^3$ is a surface that contains complex lines $L,R\subset X$
\st $|L\cap R|=1$, then
there exists a M\"obius transformation $\nu\in\aut\S^3$ and
hyperplane section $H\in\{\cE,\cU,\cH\}$
\st
\[
L\cup\oL\cup R\cup\oR\subseteq X\cap \nu(H),
\]
where $\oL=\sigma_X(L)$ and $\oR=\sigma_X(R)$.
\end{lemma}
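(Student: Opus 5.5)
The four lines $L,\oL,R,\oR$ span a real hyperplane, and after a M\"obius transformation that hyperplane becomes $x_0=0$, $x_0=x_4$, or $x_4=0$. The plan is to show this by linear algebra on the span of the lines and then normalize via Sylvester's law of inertia.

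\emph{Step 1: the lines span a real plane.} Let $L\cap R=\{q\}$. The span $\langle L,R\rangle$ is a complex plane $\Pi\subset\P^4$. Its conjugate $\overline\Pi=\langle\oL,\oR\rangle$ meets $\Pi$, since two planes in $\P^4$ always meet. Hence $W:=\langle\Pi,\overline\Pi\rangle$ has dimension at most $3$, and $W$ is real because it is $\sigma$-invariant. We need $\dim W=3$. If instead $\dim W=2$, then $\Pi=\overline\Pi$ is a real plane containing $L$ and $R$, so $\Pi\cap\S^3\supseteq L\cup R$. A plane conic containing two lines is exactly $L\cup R$, unless $\Pi\subset\S^3$, which is impossible because $\S^3$ contains no planes. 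Then reality of $\Pi\cap\S^3$ forces $\{\oL,\oR\}=\{L,R\}$. This degenerate case, where $L$ and $R$ are real or conjugate to each other, must be handled separately. The statement implicitly assumes the lines are non-real with $\oL\neq R$, or else we simply enlarge $\Pi$ to any real hyperplane through it and continue with that hyperplane as $W$. In the generic case $\dim W=3$, so $W$ is a real hyperplane, and $L\cup\oL\cup R\cup\oR\subseteq X\cap W$.

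\emph{Step 2: classify $W\cap\S^3$.} Write $W=\{a\cdot x=0\}$ with $a$ real, and let $\langle\ ,\ \rangle$ be the Lorentz form of signature $(1,4)$ defining $\S^3$. The restriction of the form to $W$ has signature determined by $\langle a,a\rangle$, taken with respect to the dual form:
\begin{itemize}
\item if $a$ is timelike, the restriction is definite, of type $(0,4)$ or $(4,0)$;
\item if $a$ is lightlike, the restriction is degenerate with rank $3$;
\item if $a$ is spacelike, the restriction has type $(1,3)$.
\end{itemize}
The real orthogonal group $O(1,4)$ acts transitively on vectors of each fixed norm sign up to scaling, and $O(1,4)$ modulo $\pm1$ is $\aut\S^3$. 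So some $\nu\in\aut\S^3$ moves $W$ to $\{x_0=0\}$, $\{x_0=x_4\}$, or $\{x_4=0\}$. The hyperplane sections of $\S^3$ these give are $\cE$, $\cU$, and $\cH$ respectively. Equivalently, $W$ is $\nu(H)$ for the corresponding $H$ after replacing $\nu$ by $\nu^{-1}$, which gives the claimed inclusion.

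\emph{Main obstacle.} The delicate step is ruling out, or absorbing, the case $\dim W=2$ in Step 1. There one must check that $\{\oL,\oR\}=\{L,R\}$ still yields a real hyperplane containing all four lines. This is immediate, since any real hyperplane through the real plane $\Pi$ works. The extra freedom in that case can even be used to choose the hyperplane type, for instance a spacelike normal, by a short argument on the pencil of real hyperplanes through $\Pi$. The remaining work is routine: verifying transitivity of $O(1,4)$ on each orbit type.
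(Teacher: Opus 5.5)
Your Step~2 is correct and matches the paper. The paper classifies the real hyperplane by whether its pole lies inside, on, or outside $S^3$; that is your timelike/lightlike/spacelike normal, after which the hyperplane is moved to $\cE$, $\cU$ or $\cH$.

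The gap is in Step~1. Two planes in $\P^4$ that meet in only one point span all of $\P^4$, since the vector dimension of the span is $3+3-1=5$. So $\Pi\cap\overline\Pi\neq\varnothing$, which holds for any two planes in $\P^4$, gives you nothing; you would need $\Pi\cap\overline\Pi$ to contain a line. This cannot be obtained from $|L\cap R|=1$ alone. Take $q=(0:1:\ii:0:0)$, $a=(0:0:0:1:\ii)$ and $b=(1:0:0:1:0)$. The lines $L=\langle q,a\rangle$ and $R=\langle q,b\rangle$ lie in $\S^3$ and meet only in $q$. Yet $q,\oq,a,\oa,b$ span $\C^5$, so $L\cup\oL\cup R\cup\oR$ lies in no hyperplane at all. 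A general real hypersurface section of $\S^3$ through these four lines is an irreducible real surface $X$, so this satisfies the lemma's hypotheses. The delicate case is therefore not $\dim W=2$, as you suggest, but the generic one.

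The missing ingredient is the extra incidence $L\cap\oR\neq\varnothing$, which is available wherever the lemma is used. In the proof of \Cref{prp:EU} the paper first shows $|L\cap R|=|\oL\cap R|=|L\cap\oR|=|\oL\cap\oR|=1$. In Appendix~\ref{app:B} the four lines form a quadrilateral as in \Cref{fig:lines}. The paper's one-line justification tacitly relies on this.

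Suppose $L\cap R=\{q\}$ and $L\cap\oR=\{r\}$ with $q\neq r$. Conjugating gives $\overline{r}\in\oL\cap R$. Also $q=\overline{r}$ is impossible, since then $L=\langle q,\oq\rangle$ would be a real line in $\S^3$. Hence $L=\langle q,r\rangle$, $R=\langle q,\overline{r}\rangle$, $\oL=\langle\oq,\overline{r}\rangle$ and $\oR=\langle\oq,r\rangle$. All four lines lie in the conjugation-invariant span $\langle q,\oq,r,\overline{r}\rangle$, of projective dimension at most $3$, hence in a real hyperplane.

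If instead $q=r$, then $q\in R\cap\oR$. Since $\S^3$ contains no real lines, $q$ must be real, and all four lines lie in the real tangent hyperplane of $\S^3$ at $q$. With Step~1 replaced by this, your Step~2 finishes the proof.
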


\begin{proof}
Since $\sigma_X(X)=X$ and $|L\cap R|=1$,
it follows that
$L\cup\oL\cup R\cup\oR$
is contained in a real hyperplane section $H\subset \S^3$.
We observe that either $H=\cE$ or there exists $p\in\R^4$ \st
$H=\bP(Z_p)\cap\S^3$ and $Z_p\subset \R^4$ is the polar hyperplane
of $p$ with respect to $S^3\subset\R^4$.
We deduce that if $p$ lies either
in the interior of $S^3$, on $S^3$, or in the exterior of $S^3$,
then $H$ is M\"obius equivalent to the elliptic absolute $\cE$,
Euclidean absolute $\cU$ and hyperbolic absolute $\cH$, \resp.
\end{proof}

\begin{proposition}
\label{prp:EU}
If $\varphi\c\P^1\times\P^1\to X\subset \S^3$ is a biquadratic birational morphism,
then there exists a M\"obius transformation $\nu\in\aut\S^3$
\st $\nu\circ\varphi$ is either $\cE$-decomposable or $\cU$-decomposable.
\end{proposition}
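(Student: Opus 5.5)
By \Cref{lem:line} there is $p\in\P^1$ such that the left image $L:=\set{\varphi(p,y)}{y\in\P^1}$ is a complex line in~$X$. By \Cref{lem:nopoint} this line is a complex double line, and $\varphi(p,\_)$ is a $2{:}1$ map onto it.

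\emph{Step 1: find a right image that is a line.} Applying the same argument with the roles of $x$ and $y$ interchanged gives some $q\in\P^1$ whose right image $R:=\set{\varphi(x,q)}{x\in\P^1}$ is a complex line; \Cref{lem:coord} and \Cref{lem:L} are symmetric in the two factors. Since $\varphi$ is a morphism, $\varphi(p,q)\in L\cap R$. We need $L\neq R$, and hence $|L\cap R|=1$. Suppose $L=R$. The preimage of this line contains the left fiber $\{p\}\times\P^1$ and the right fiber $\P^1\times\{q\}$. A general hyperplane through $L$ pulls back to a curve of class $2\l_0+2\l_1$ by \RL{Y}{B}, and this curve contains both fibers, each with multiplicity at least two because $L$ is a double line. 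That would require class at least $2\l_0+2\l_1$ from these two fibers alone, so the hyperplane section would equal $L$ with multiplicity. This contradicts \RL{Y}{C}, since a hyperplane section of the degree-$8$ surface $X$ has degree $8$, not degree at most $2$. So $L\neq R$.

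\emph{Step 2: conjugate lines and the hyperplane section.} Because $\varphi$ is real, $\oL=\sigma_X(L)$ is the left image associated to $\overline{p}$ and $\oR$ is the right image associated to $\overline{q}$. \Cref{lem:S3} then gives $\nu\in\aut\S^3$ and $H\in\{\cE,\cU,\cH\}$ with $L\cup\oL\cup R\cup\oR\subseteq X\cap\nu(H)$. Equality should follow from degree counting. The pullback of $\nu(H)$ has class $2\l_0+2\l_1$ by \RL{Y}{B}. It contains the double fibers over $p,\overline{p}$ (class $2\l_1$, assuming $p\neq\overline{p}$) and over $q,\overline{q}$ (class $2\l_0$), which already exhausts the class. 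Hence $X\cap\nu(H)=L\cup\oL\cup R\cup\oR$, and $\nu^{-1}\circ\varphi$ is $H$-decomposable. The case $p\in\R$ must be excluded: if $p$ were real, $L$ would be a real line in $\S^3$, which is impossible because $\S^3_\R$ contains no real lines. So $L\neq\oL$, and similarly $R\neq\oR$.

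\emph{Step 3: rule out $H=\cH$ (the main obstacle).} The hyperbolic absolute $\cH$ is a smooth quadric with real points. Its real lines come in two rulings, and a pair of complex conjugate lines on a real quadric of signature $(3,1)$ would meet in a real point. I would show that on $\cH$ two conjugate non-real lines lie in the same ruling only if they meet, and that this produces a configuration $L\cup\oL\cup R\cup\oR$ with $L\cap\oL\neq\varnothing$ lying in one ruling. That contradicts the four lines forming a grid in which $L$ meets only $R$ and $\oR$. Concretely, the left fibers over $p$ and $\overline{p}$ are disjoint, so $L\cap\oL$ must come from a point of $X$ with two preimages. This is impossible since $\varphi$ is injective away from double curves, or alternatively because $\nu(\cH)$ would then contain a real point counted with multiplicity four. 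I expect this signature argument, showing that conjugate skew lines occur only on the elliptic quadric $\cE$ or on the cone $\cU$, to be the delicate step. Checking $|L\cap R|=1$ in Step 1 is routine by comparison.
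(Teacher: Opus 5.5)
You follow the paper's outline: \Cref{lem:line} for both factors, then \Cref{lem:S3}, a degree count, and finally excluding $\cH$. Your fiber-class count in Step~2 is a clean substitute for the paper's B\'ezout argument. The gap is in Step~1.

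Write $F_p:=\{p\}\times\P^1$ and $G_q:=\P^1\times\{q\}$. It is not true that $F_p$ occurs with multiplicity at least two in the pullback of a hyperplane through the double line $L$. The doubling is already accounted for by $\varphi|_{F_p}$ being $2{:}1$, and in general $F_p$ occurs with multiplicity one. To see this, note that some hyperplane of $\P^4$ contains both $L$ and $\oL$. By your claim its pullback would contain $2F_p+2F_{\op}$, which does not fit in the class $2\ell_0+2\ell_1$, so the claim would rule out every such $\varphi$. Your own Step~2 count also shows the four fibers occur with multiplicity exactly one.

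Even granting the multiplicities, there is no contradiction with \RL{Y}{C}. Since $\varphi$ is $2{:}1$ on each fiber, $\varphi_*(2F_p+2G_q)=8L$ has degree $8$; you conflated the support of the hyperplane section with its degree.

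The paper argues as follows instead. If $L=R$, then $\oL=\oR$ and $\varphi(\op,q)\in L\cap\oL$, so $L\cap\oL$ is a real point. Stereographic projection from this point sends a general real left image and a general real right image to conics through $\pi(L)$, $\pi(\oL)$ and a common finite point. These conics are therefore coplanar, which forces $\pi(X)$ to be a plane.

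Your own framework also gives a fix. The lines $L$ and $\oL$ span a plane $\Pi$. Every hyperplane of the pencil through $\Pi$ contains $L=R$ and $\oL=\oR$, so by your Step~2 count it pulls back to exactly $F_p+F_{\op}+G_q+G_{\oq}$. Since these hyperplanes cover $\P^4$, every point of $\P^1\times\P^1$ would then lie on one of four fibers, which is absurd.

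You also never exclude $L=\oR$, i.e.\ $R=\oL$. The paper repeats the argument for the pairs $(\oL,R)$, $(L,\oR)$ and $(\oL,\oR)$, and your Step~3 needs this distinctness.

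Step~3 is not an argument as written, and several of its claims are wrong:
\begin{itemize}
\item $\cH$ contains no real lines.
\item Distinct lines of the same ruling never meet.
\item A point of $L\cap\oL$ having two preimages contradicts nothing, because every point of the double lines $L$ and $\oL$ has two preimages.
\item A real point of multiplicity four is not absurd in itself: the vertex of $\cU$ is one in the $\cU$-decomposable case.
\end{itemize}

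Once $L,\oL,R,\oR$ are known to be pairwise distinct, the exclusion is short. Since $F_p$ meets $G_q$ and $G_{\oq}$, the line $L$ meets both $R$ and $\oR$. Hence on the smooth quadric $\nu(\cH)$ both $R$ and $\oR$ lie in the ruling opposite to $L$. But $\nu(\cH)$ has real points and no real lines, so complex conjugation interchanges its two rulings: a line of an invariant ruling through a real point would be real. Thus $R$ and $\oR$ cannot lie in the same ruling. The paper phrases this as $|L\cap\oL|=|R\cap\oR|=1$ forcing $L\cap\oR=\varnothing$.

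So the delicate step is Step~1, not Step~3.
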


\begin{proof}
Let $\theta\c\P^1\times\P^1\to\P^1\times\P^1$ send $(x,y)$ to $(y,x)$.
It follows from \Cref{lem:line} applied to $\varphi$ and $\varphi\circ\theta$
that there exist complex conjugate left fibers $A,\oA\subset\P^1\times\P^1$
and complex conjugate right fibers $B,\oB\subset\P^1\times\P^1$
\st the left images $L=\varphi(A)$, $\oL=\varphi(\oA)$
and right images
$R=\varphi(B)$, $\oR=\varphi(\oB)$
are complex lines in $X$ \st $\sigma_X(R)=\oR$ and $\sigma_X(L)=\oL$.
Since $\S^3$ does not contain lines, we observe that $L\neq\oL$
and $R\neq\oR$.
Recall from \RL{Y}{A} that
$|A\cap B|=|\oA\cap B|=|A\cap\oB|=|\oA\cap\oB|=1$
and $|A\cap\oA|=|B\cap\oB|=0$.
By B\'ezout's theorem, $|L\cap R|\in\{0,1,\infty\}$.
Since $A\cap B\neq\varnothing$, we find that either $|L\cap R|=1$ or $L=R$.

Assume by contradiction that $L=R$. As $\sigma_X(X)=X$ and $L\neq\oL$,
we find that $\oL=\oR$ and $L\cap\oL=\{p\}$ for some $p\in X_\R$.
We may assume \Wlog that $p$ is the center of
the stereographic projection~$\pi\c\S^3\dto\P^3$
so that $\pi(L)$ and $\pi(\oL)$ are complex conjugate points.
Let $C\subset X$ be a real general left image and
$C'\subset X$ be a real general right image so that $q\in C\cap C'$ for some $q\in X\setminus(L\cup\oL)$.
We find that $C$ and $C'$ are irreducible conics
\st $D\cap M\neq\varnothing$ for all $D\in\{C,C'\}$ and $M\in\{L,\oL\}$.
Hence, $\pi(C),\pi(C')\subset\P^3$ are irreducible conics
\st $\{\pi(q),\pi(L),\pi(\oL)\}\subset\pi(C)\cap\pi(C')$. This is implies
that $\pi(C)$ and $\pi(C')$ are coplanar.
Since $C$ and $C'$ are general and independent it follows that
$\pi(X)$ contains a plane. We arrived at a contradiction as $X$ is irreducible and not a sphere.

Repeating the same argument for $(\oL,R)$, $(L,\oR)$ and $(\oL,\oR)$
instead of $(L,R)$,
we find that $|L\cap R|=|\oL\cap R|=|L\cap\oR|=|\oL\cap\oR|=1$.
By \Cref{lem:S3}, there exists $\nu\in\aut\S^3$ \st
$L\cup\oL\cup R\cup\oR\subseteq X\cap\nu(H)$ for some $H\in\{\cE,\cU,\cH\}$.
We know from \Cref{lem:nopoint} that $L,\oL,R,\oR\subset X$ are double lines.
Since $\deg X=8$ by \RL{Y}{C}, it follows from
B\'ezout's theorem that $L\cup\oL\cup R\cup\oR=\nu(H)$.
Now suppose by contradiction that $\nu(H)=\cH$.
By definition, the hyperbolic absolute $\cH$ is equal to $\S^3$ and thus $|L\cap\oL|=|R\cap\oR|=1$.
Since $|L\cap R|=|\oL\cap\oR|=1$, this implies that $L\cap\oR=\varnothing$.
We arrived at a contradiction as $|L\cap\oR|=1$.
Hence, $\nu\circ\varphi$ is either $\cE$-decomposable or $\cU$-decomposable
as was to be shown.
\end{proof}

\section{Biquadratic \texorpdfstring{$\cE$}{E}-decomposable morphisms}
\label{sec:E}

Suppose we are given a biquadratic birational morphism
$\varphi\c\P^1\times\P^1\to X\subset \S^3$.
The goal of this section is to show \Cref{prp:ECD},
namely if $\varphi$ is $\cE$-decomposable,
then there exist circles $C,D\subset S^3$ \st $X=\bP(C\cdot D)$.

\begin{lemma}
\label{lem:ST}
Suppose that $S,\tS\in\H[s]$ and $T,\tT\in\H[t]$ are linear \st
\[
S\cdot S^*=\tS\cdot\tS^*=u\cdot (s-a)\cdot(s-\oa)
\quad\text{and}\quad
T\cdot T^*=\tT\cdot\tT^*=v\cdot (t-b)\cdot(t-\ob)
\]
for some $a,b\in\C$ and $u,v\in\R$.
\begin{claims}
\item\label{lem:ST:a} $[T(b)\cdot S \cdot\tS \cdot \tT(b)]\in \cN$ is a complex point.
\item\label{lem:ST:b} There exists $c\in\C$ \st $S(a)\cdot \tT(c)\cdot \tS(a)\cdot T(c)=0$.
\item\label{lem:ST:c} If $S(a)\cdot\tT(b)\cdot\tS(a)\cdot T(b)=0$
and
$[S\cdot\tT(b)\cdot\tS\cdot T(b)]$,
$[S(a)\cdot\tT\cdot\tS(a)\cdot T]$
are both not complex points,
then there exist linear $T'\in\H[t]$ and $S'\in\H[s]$ \st
\[
S\cdot\tT\cdot\tS\cdot T=S\cdot S'\cdot T'\cdot T,
\]
where $S'\cdot S'^*=\tS\cdot\tS^*$ and $T'\cdot T'^*=\tT\cdot\tT^*$.
\end{claims}
\end{lemma}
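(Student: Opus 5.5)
The plan is to reduce every claim to the geometry of the null quadric~$\cN$: the left and right generators and the abc-\Cref{lem:abc}. The basic observation comes from \RL{basic}{a} and the hypotheses. Since $T(b)\cdot T(b)^*=\tT(b)\cdot\tT(b)^*=0$ and $S(a)\cdot S(a)^*=\tS(a)\cdot\tS(a)^*=0$, the values $T(b),\tT(b),S(a),\tS(a)$ are null quaternions. So for every $x\in\HC$ we have $x\cdot\tT(b)\cdot\tT(b)^*=0$, which gives $[x\cdot\tT(b)]\in\cL_{\tT(b)^*}$. Similarly $T(b)^*\cdot T(b)\cdot y=0$ gives $[T(b)\cdot y]\in\cR_{T(b)^*}$. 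This assumes $u,v\neq0$; if $u=0$ or $v=0$ the linear polynomial has null leading coefficient and I would treat that degenerate case separately.

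For~\ref{lem:ST:a}, I would apply both remarks to $T(b)\cdot S\cdot\tS\cdot\tT(b)$. For every $s$, its value lies in the cone over $\cL_{\tT(b)^*}\cap\cR_{T(b)^*}$. By \RL{N}{a}, a left and a right generator are complex lines of the two different rulings of the smooth quadric~$\cN$, so they meet in exactly one point. Hence the Zariski closure of the image is that single complex point, which lies in~$\cN$. (If the polynomial vanished identically, I would note this separately; the statement implicitly assumes that it does not.)

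For~\ref{lem:ST:b}, the same argument with the roles of the $s$- and $t$-factors swapped gives $S(a)\cdot\tT(t)\cdot\tS(a)=f(t)\cdot w$. Here $f\in\C[t]$ has degree at most one and $w$ is a fixed null quaternion spanning $\cL_{\tS(a)^*}\cap\cR_{S(a)^*}$. If $f$ has a root $c$, we are done. Otherwise $f$ is a nonzero constant, and the claim becomes: $w\cdot T(c)=0$ for some~$c$. I expect this to be the main obstacle.

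My plan for it: $w\cdot T(t)$ is linear in~$t$ and, by the remark above, takes values in the $2$-dimensional space $\cR_{w}$. Evaluating at $t=b$ and $t=\ob$ gives $w\cdot T(b)\in\cL_{T(b)^*}$ and $w\cdot T(\ob)\in\cL_{T(\ob)^*}$. So both values lie in specific lines of $\cR_w$. I would try to show from this, together with $w\cdot w^*=0$, that $w\cdot T_0$ and $w\cdot T_1$ are linearly dependent, where $T=T_1 t+T_0$; then a suitable~$c$ exists. If that fails, I would allow $c=\infty$ by homogenizing, which I suspect is the intended reading.

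For~\ref{lem:ST:c}, set $M:=\tT\cdot\tS$, which has bidegree~$(1,1)$. From $S(a)\cdot\tT(b)\cdot\tS(a)\cdot T(b)=0$ and the abc-\Cref{lem:abc}, applied twice to null factors, the hypothesis that the two images are not complex points should force $\tT(b)\cdot\tS(a)=0$. Otherwise $S(a)\cdot\tT(b)\cdot\tS(a)=0$ or $\tT(b)\cdot\tS(a)\cdot T(b)=0$, and then one of the two parametrized families collapses to a point by \RL{basic}{c}.

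Next I would show that $[M(s,b)]$ is contained in a left generator. It is a line through the point $[\tT(b)\cdot\tS(\oa)]$ and vanishes at $s=a$. Combining $\tT(b)\cdot\tS(a)=0$ with \RL{N}{b}--\ref{lem:N:c} should place it in $\cL_\alpha$ for a suitable~$\alpha$; complex conjugation then handles~$\ob$. \Cref{cor:fct}\ref{cor:fct:a} then yields $M=S'\cdot T'$ with $T'=t-h$ up to a constant. Finally, $M\cdot M^*=\tT\cdot\tT^*\cdot\tS\cdot\tS^*$ together with \RL{basic}{a} and \RL{uni}{b} gives $S'\cdot S'^*=\tS\cdot\tS^*$ and $T'\cdot T'^*=\tT\cdot\tT^*$ after rescaling by a real constant.
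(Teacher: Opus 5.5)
Your parts (a) and (c) follow the paper. In (a) you find the single intersection point of a left and a right generator; you obtain the two generators directly from $x\cdot\tT(b)\cdot\tT(b)^*=0$ and $T(b)^*\cdot T(b)\cdot y=0$, where the paper invokes \Cref{cor:fct}. In (c) you run the same three-case analysis via the abc-\Cref{lem:abc} and then apply \Cref{cor:fct}\ref{cor:fct:a} to $\tT\cdot\tS$. A few slips need fixing:
\begin{itemize}
\item The values $w\cdot T(t)$ lie in $\cR_{w^*}$ (since $w^*\cdot w=0$), not in $\cR_w$.
\item Ruling out $S(a)\cdot\tT(b)=0$ needs more than \RL{basic}{c}. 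Write $S\cdot\tT(b)=(s-a)\,\kappa$ with $\kappa$ null, and observe that $\kappa\cdot\tS\cdot T(b)$ lies in $\cR_{\kappa^*}\cap\cL_{T(b)^*}$, as in your (a). The case $\tS(a)\cdot T(b)=0$ is symmetric.
\item Write $\tS=\tS_1 s+\tS_0$. Then $M(s,b)=\tT(b)\cdot\tS(s)=(s-a)\,\tT(b)\cdot\tS_1$, so $[M(s,b)]$ is a single point of $\cN$, not a line, and it lies in a left generator trivially.
\item The case $u=0$ or $v=0$ never occurs, because the leading coefficients are nonzero real quaternions.
\end{itemize}

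The real problem is (b), in the case where $S(a)\cdot\tT\cdot\tS(a)$ is a nonzero constant. With $\tT=\tT_1t+\tT_0$, this means $S(a)\cdot\tT_1\cdot\tS(a)=0\neq S(a)\cdot\tT_0\cdot\tS(a)$. Your primary plan cannot work here, and your own observation shows why. If $w\cdot T(b)$ and $w\cdot T(\ob)$ are nonzero, they are the points where the line $\cR_{w^*}$ meets the complex conjugate left generators $\cL_{T(b)^*}$ and $\cL_{T(\ob)^*}$. These generators are distinct, since no line of $\cN$ is real, and hence disjoint. So $w\cdot T_1$ and $w\cdot T_0$ are independent, and $w\cdot T(c)\neq 0$ for every $c\in\C$.

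This really happens. Take $S=s-\qi$, $\tS=s+\qi$, $T=\tT=t-\qj$, $a=b=\ii$ and $u=v=1$. Then $S(\ii)\cdot\tS(\ii)=0$ and
\[
S(\ii)\cdot\tT(c)\cdot\tS(\ii)\cdot T(c)=2\,(\qj+\ii\,\qk)\cdot(c-\qj)=2\,(1+\ii\,\qi+c\,\qj+\ii\,c\,\qk)\neq 0
\]
for every $c\in\C$, and the same holds for $a=-\ii$. So (b) is false as stated. The paper's proof has the same hole: it applies \RL{basic}{c} to $S(a)\cdot\tT\cdot\tS(a)$, which need not have degree one.

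Your fallback is the right repair, and you should adopt it outright rather than as a guess about intent. In the degenerate case the bihomogenized product vanishes at $c=\infty$, because its leading coefficient is $S(a)\cdot\tT_1\cdot\tS(a)\cdot T_1=0$. With $c\in\P^1$, (b) is true. This is all \Cref{prp:ECD} uses, since $(1:a;0:1)$ is then a base point of $\varphi$.
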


\begin{proof}
\ref{lem:ST:a}
It follows from \Cref{cor:fct} that
$[T(b)\cdot S\cdot\tS\cdot\tT(b)]$
is contained in both a left and right generator.
By \RL{N}{a} these generators meet in a single complex point
and thus Assertion~\ref{lem:ST:a} follows.

\ref{lem:ST:b}
It follows from \Cref{cor:fct}
that $[S(a)\cdot\tT\cdot\tS(a)]$ is
contained in both a left and right generator
and thus a complex point by \RL{N}{a}.
It follows from \RL{basic}{c} that
$t-c \mid S(a)\cdot\tT\cdot\tS(a)$
for some $c\in\C$ so that
Assertion~\ref{lem:ST:b} follows.

\ref{lem:ST:c}
We know from the the abc-\Cref{lem:abc} that either
\[
S(a)\cdot\tT(b)=0,\quad
\tT(b)\cdot\tS(a)=0 \quad\text{or}\quad
\tS(a)\cdot T(b)=0.
\]
For example, if $S(a)\cdot\tT(b)=0$, then $s-a \mid S\cdot\tT(b)$ by \RL{basic}{b}
and thus $[S\cdot\tT(b)]$ is a complex point by \RL{basic}{c}.
This complex point in~$\cN$ is by \RL{N}{a} the intersection of some left and right generator.
The other two cases are analogous and thus either
\[
[S\cdot\tT(b)]\in\cL_\alpha\cap\cR_\beta,\quad
[\tT(b)\cdot\tS]\in\cL_\alpha\cap\cR_\beta\quad\text{or}\quad
[\tS(a)\cdot T]\in\cL_\alpha\cap\cR_\beta
\]
for some $\alpha,\beta\in\HC$ \st $[\alpha^*],[\beta^*]\in\cN$.

Suppose by contradiction that $[S\cdot\tT(b)]\in\cL_\alpha\cap\cR_\beta$.
It follows from \Cref{cor:fct} that
$S\cdot \tT=T'\cdot S'$ for some linear $S'\in\H[s]$ and $T'\in\H[t]$ \st $S'\cdot S'^*=S\cdot S^*$ and
$T'\cdot T'^*=\tT\cdot \tT^*$.
This implies that $S\cdot\tT(b)\cdot\tS\cdot T(b)=T'(b)\cdot S'\cdot\tS\cdot T(b)$.
Hence, $[S\cdot T(b)\cdot \tS\cdot \tT(b)]$ is a complex point by Assertion~\ref{lem:ST:a}
and thus we arrived at a contradiction.

Suppose by contradiction that $[\tS(a)\cdot T]\in\cL_\alpha\cap\cR_\beta$.
It follows from \Cref{cor:fct} that
$\tS\cdot T=T'\cdot S'$ for some linear $S'\in\H[s]$ and $T'\in\H[t]$ \st $S'\cdot S'^*=\tS\cdot\tS^*$ and
$T'\cdot T'^*=T\cdot T^*$.
This implies that $S(a)\cdot \tT\cdot \tS(a)\cdot T=S(a)\cdot \tT\cdot T'\cdot S'(a)$.
Hence, $[S(a)\cdot \tT\cdot  \tS(a)\cdot T]$ is a complex point by Assertion~\ref{lem:ST:a}
and thus we arrived at a contradiction.

Finally, suppose that $[\tT(b)\cdot\tS]\in\cL_\alpha\cap\cR_\beta$.
It follows from \Cref{cor:fct} that
$\tT\cdot\tS=S'\cdot T'$ for some linear $S'\in\H[s]$ and $T'\in\H[t]$ \st $S'\cdot S'^*=\tS\cdot \tS^*$ and
$T'\cdot T'^*=\tT\cdot\tT^*$.
Hence, $S\cdot \tT\cdot \tS\cdot T=S\cdot S'\cdot T'\cdot T$ as was to be shown.

We considered all three cases and thus concluded the proof.
\end{proof}

\begin{remark}
If the linear polynomials $S$, $\tS$, $T$ and $\tT$ at \RL{ST}{c} are monic,
then it follows from the proofs of \citep[Theorems~7.3 and 7.6]{2022n1}
that $S'=\tS$ and $T'=\tT$ so that $\tT\cdot \tS=\tS\cdot\tT$.
\END
\end{remark}

Suppose that $\varphi\c \P^1\times\P^1\dto X\subset\S^3$ is a rational map with
components $(\varphi_0,\ldots,\varphi_4)$.
The \df{associated quaternionic polynomial} $Q\in\H[s,t]$ of~$\varphi$ is
up to scalar multiplication defined as
\[
Q:=Q_1+Q_2\,\qi+Q_3\,\qj+Q_4\,\qk,
\]
where $Q_i(s,t):=\varphi_i(1:s;1:t)$ for $0\leq i\leq 4$.
Notice that $Q$ is primitive as the components of $\varphi$ do not have a common factor.
Since $X\subset\S^3$, we deduce that
\[
Q\cdot Q^*=Q_1^2+\cdots+Q_4^2=Q_0^2.
\]
Recall from \textsection\ref{sec:model} that the central projection~$\tau\c \S^3\to \P^3$ is a 2:1 morphism with
center~$(1:0:0:0:0)$, ramification locus~$\cE$
and branching locus~$\tau(\cE)=\cN$.

Let $\theta\c\P^1\times\P^1\to\P^1\times\P^1$ send $(x,y)$ to $(y,x)$.

\begin{lemma}
\label{lem:Q}
If $\varphi\c\P^1\times\P^1\dto X\subset \S^3$ is an
$\cE$-decomposable biquadratic birational map,
then
there exist linear polynomials $S,\tS\in\H[s]$ and $T,\tT\in\H[t]$
such that the associated quaternionic polynomial~$Q$ of either $\varphi$ or $\varphi\circ\theta$
is equal to either
$S\cdot \tT\cdot \tS\cdot T$ or $S\cdot\tS\cdot \tT\cdot T$.
\end{lemma}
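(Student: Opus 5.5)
The plan is to read off the norm polynomial of $Q$ from $\cE$-decomposability. Then I peel off one linear factor in $t$ and one in $s$ using \Cref{cor:fct}, and finish the remaining bidegree~$(1,1)$ piece with \Cref{cor:clifford}.

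\textbf{Step 1: the norm of $Q$.} Since $\cE=\{x_0=0\}\cap\S^3$ and $\varphi$ is birational and biquadratic, the curve $\{\varphi_0=0\}\subset\P^1\times\P^1$ has bidegree $(2,2)$ by \RL{Y}{B}. Its image is $X\cap\cE=L\cup\oL\cup R\cup\oR$, so this curve must be the union of the two complex conjugate left fibers and the two complex conjugate right fibers over these lines. Dehomogenizing gives
\[
Q_0=c\,A\,B,\qquad A:=(s-a)(s-\oa)\in\R[s],\qquad B:=(t-b)(t-\ob)\in\R[t],
\]
with $a,b\in\C\setminus\R$ and $c\in\R$, and hence $Q\cdot Q^*=Q_0^2=c^2A^2B^2$.

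\textbf{Step 2: the image of a right fiber.} The right image $R$ corresponding to $t=b$ is a complex line in $\cE$. The central projection $\tau$ restricts to an isomorphism $\cE\to\cN$, so $[Q(s,b)]=\tau(R)$ is a complex line in $\cN$. By \RL{N}{a} it is a left or a right generator, and its conjugate $[Q(s,\ob)]$ is the conjugate generator of the same type.

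\textbf{Step 3: peeling linear factors.} Applying \Cref{cor:fct} gives either $Q=Q'\cdot T$ or $Q=T\cdot Q'$ with $T\in\H[t]$ linear and $T\cdot T^*=B$. Running the same argument with the roles of $s$ and $t$ exchanged gives either $Q=Q''\cdot S$ or $Q=S\cdot Q''$ with $S\cdot S^*=A$. If the $s$-factor and the $t$-factor sit on opposite sides, then, after possibly passing to $\varphi\circ\theta$, we get $Q=S\cdot M\cdot T$ for some $M\in\H[s,t]$. Here the $t$-factor being on the right is used first, so $M$ is determined via $Q'=S\cdot M$ from the left $s$-factor of $Q'$. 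The polynomial $M$ has bidegree $(1,1)$, is primitive because $Q$ is, and satisfies $M\cdot M^*=c^2A\,B$ by \RL{basic}{a} and cancellation. \Cref{cor:clifford}, (iii)$\Rightarrow$(ii), then gives $M\in\{\tS\cdot\tT,\ \tT\cdot\tS\}$. This yields exactly the two shapes $S\cdot\tS\cdot\tT\cdot T$ and $S\cdot\tT\cdot\tS\cdot T$, with constants absorbed into the factors. To place the $s$-factor on the left of $Q'$, I apply \Cref{thm:fct}\ref{thm:fct:b} in the variable $s$ to $Q'$. Its hypothesis is checked from $Q=Q'\cdot T$: the coefficients $R_i\in\H[t]$ of $Q$ in powers of $s$ all have $T$ as a common right factor, and $A$ divides the relevant products $R_i^*R_j$.

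\textbf{Main obstacle.} The hard case is when both factors sit on the same side, for example $Q=Q'\cdot T=Q''\cdot S$, and symmetrically both on the left. There I must show that one can still rewrite $Q=\hat Q\cdot\hat S\cdot\hat T$ or reach the opposite-side configuration. I would first try to transfer the generator condition to $Q'$. For generic $t$ we have $Q'(a,t)=Q(a,t)\,T(t)^{-1}$, and left generators are closed only under left multiplication by \RL{N}{d}. So instead I would verify the divisibility criterion of \Cref{thm:fct} coefficientwise, using \Cref{rmk:fct} and the AB-\Cref{lem:AB} to move the factor of norm $A$ past $T$. This mimics \RL{ST}{c}. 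If $Q'$ acquires a right $s$-factor, then $Q=M\cdot S\cdot T$ and \Cref{cor:clifford} again applies to $M$. If that fails, I would show the configuration forces one of the lines $L$, $R$ to collapse to a point, contradicting \RL{Y}{D}.
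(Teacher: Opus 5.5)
\textbf{There is a genuine gap.} The configuration you call the main obstacle is left open: both linear factors on the same side, i.e.\ left and right images projecting to generators of the same type. The remedies you sketch would not close it as written.
\begin{enumerate}
\item \RL{Y}{D} is proved only for morphisms. \Cref{lem:Q}, however, concerns birational maps, and it is used in \Cref{prp:E4} for maps with base points.
\item Even if you reached $Q=M\cdot S\cdot T$, \Cref{cor:clifford} may return $M=\tT\cdot\tS$. The resulting shape $\tT\cdot\tS\cdot S\cdot T$ is none of the shapes allowed in the statement, neither for $\varphi$ nor for $\varphi\circ\theta$.
\item ``Moving the factor of norm $A$ past $T$'' with the AB-Lemma is a hope, not yet an argument.
\end{enumerate}

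\textbf{The missing idea is that the same-side case never occurs.} This is the paper's opening observation: $[Q]\cap\cN=\tau(X\cap\cE)$ consists of two conjugate left generators and two conjugate right generators, with the left images of one type and the right images of the other. Here is why.
\begin{enumerate}
\item The left fiber associated to $a$ meets the right fibers associated to $b$ and $\ob$. These two points cannot both be base points of $\varphi$. Otherwise every $\varphi_i(a,\cdot)$ would be a multiple of one binary quadratic form vanishing at $b,\ob$; since the components have no common factor, $L$ would then be a point.
\item Hence $L$ meets $R$ or $\oR$.
\item By \RL{N}{a}, exactly one left and one right generator pass through each point of $\cN$. So two distinct meeting lines of $\cN$ have opposite types.
\item Complex conjugation preserves the type, since $\overline{\cL_\alpha}=\cL_{\oalpha}$.
\end{enumerate}
Composing with $\theta$ if necessary, you may therefore assume $[Q(s,b)]\subseteq\cL_\alpha$ and $[Q(a,t)]\subseteq\cR_\beta$. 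That is exactly your opposite-sides case.

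From there your Step~3 is the paper's proof run in the other order. The paper first obtains $Q=S\cdot\tQ$ from \Cref{cor:fct}\ref{cor:fct:b}. It then cancels $S$ on the left in $S\cdot\tQ(s,b)\cdot\alpha=0$ and applies \Cref{cor:fct}\ref{cor:fct:a} to $\tQ$.

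Your verification of the hypothesis of \Cref{thm:fct}\ref{thm:fct:b} in Step~3 is also garbled: $A\in\R[s]$ cannot meaningfully divide products of coefficients $R_i\in\H[t]$. The clean transfer is as follows. From $\beta\cdot Q'(a,t)\cdot T(t)=\beta\cdot Q(a,t)=0$, multiply on the right by $T(t)^*$ and cancel the nonzero real polynomial $T\cdot T^*$. This gives $[Q'(a,t)]\subseteq\cR_\beta$, after which \Cref{cor:fct} applies to the primitive polynomial $Q'$.
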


\begin{proof}
Notice that $[Q]\cap\cN=\tau(X\cap\cE)$ consists
of two complex conjugate left generators and
two complex conjugate right generators.
Since $Q$ is associated to either $\varphi$ or $\varphi\circ\theta$,
we may assume \Wlog that the central projection of a right/left image in~$\cE$
is a left/right generator in~$\tau(\cE)$.
Hence, there exists $p,q\in\C$ and $[\alpha^*],[\beta^*]\in\cN$ \st
$Q(s,p)=\cL_\alpha$ and $Q(q,t)=\cR_\beta$.
It follows from \Cref{cor:fct}\ref{cor:fct:b} that
$Q=S\cdot\tQ$ for some linear $S\in\H[s]$ and $\tQ\in\H[s,t]$ of bidegree~$(1,2)$.
As $[Q(s,p)]=[S(s)\cdot \tQ(s,p)]=\cL_\alpha$ it follows that $S(s)\cdot \tQ(s,p)\cdot \alpha=0$ by definition.
Since $S\cdot S^*\neq 0$, this implies that $\tQ(s,p)\cdot \alpha=0$
and thus $[\tQ(s,p)]=\cL_\alpha$.
We apply \Cref{cor:fct}\ref{cor:fct:a} and find that $Q=S\cdot E\cdot T$,
where $T\in \H[t]$ is linear and $E\in\H[s,t]$ is of bidegree~$(1,1)$.
We apply \RL{basic}{a} and deduce that
$
Q\cdot Q^*=Q_0^2=S\cdot S^*\cdot T\cdot T^*\cdot E\cdot E^*.
$
This implies that $E\cdot E^*$ is up to scalar multiplication equal to the product
of quadratic polynomials $S\cdot S^*\in\R[s]$ and $T\cdot T^*\in\R[t]$.
Hence,
$E$ equals either $\tS\cdot\tT$ or $\tT\cdot\tS$ for some linear polynomials $\tS\in\H[s]$ and $\tT\in\H[t]$
by \Cref{cor:clifford}.
This concludes the proof.
\end{proof}

\begin{lemma}
\label{lem:SSTT}
If $\varphi\c\P^1\times\P^1\dto X\subset \S^3$ is a
biquadratic birational map \st its associated quaternionic polynomial~$Q$
is equal to $S\cdot\tS\cdot\tT\cdot T$ for some linear polynomials $S,\tS\in\H[s]$ and $T,\tT\in\H[t]$,
then there exist circles $C,D\subset S^3$ \st $X=\bP(C\cdot D)$.
\end{lemma}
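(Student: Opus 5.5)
The idea is to follow \Cref{exm:CH1}: split $Q=(S\cdot\tS)\cdot(\tT\cdot T)$ into a factor in $s$ and a factor in $t$, normalize each to land in the unit quaternions, and read $X$ off as the pointwise product of two circles.

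\textbf{Step 1 (norms and $Q_0$).} Put $U_0:=S\cdot\tS\in\H[s]$ and $V_0:=\tT\cdot T\in\H[t]$. By \RL{basic}{a} we have
\[
Q\cdot Q^*=(S\cdot S^*)(\tS\cdot\tS^*)(\tT\cdot\tT^*)(T\cdot T^*)=Q_0^2 .
\]
Each factor on the right is a real quadratic by \RL{basic}{e}. By unique factorization in $\R[s,t]$, the $s$-part $A:=(S\cdot S^*)(\tS\cdot\tS^*)$ and the $t$-part $B:=(\tT\cdot\tT^*)(T\cdot T^*)$ must each be squares up to positive constants. So $A=a\,F^2$ and $B=b\,G^2$ with $F\in\R[s]$ and $G\in\R[t]$ quadratic, and $Q_0=\pm\sqrt{ab}\,F\,G$.

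\textbf{Step 2 (unit-quaternion parametrizations).} Set $U:=U_0/(\sqrt a\,F)$ and $V:=V_0/(\sqrt b\,G)$. These are rational maps $\R\to\H$ with $U\cdot U^*=V\cdot V^*=1$, so they map into $S^3$. In the coordinates $(\varphi_0:\ldots:\varphi_4)$ we get
\[
\varphi(1:s;1:t)=(Q_0:Q_1:\ldots:Q_4)=\sqrt{ab}\,F G\,(\pm1:U(s)\cdot V(t)),
\]
where the sign is absorbed by the real structure. Hence on the affine chart, $\bR(X)$ is the Zariski closure of $\{U(s)\cdot V(t)\}$. Choose $C$ to be the Zariski closure of $U(\R)$ and $D$ that of $V(\R)$. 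Then $X=\bP(C\cdot D)$.

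\textbf{Step 3 (the images are circles; the main obstacle).} We must check that $C$ and $D$ are genuine circles: irreducible conics in $S^3$ with infinitely many real points, rather than points. The numerators and denominator of $U$ have degree at most two, so $C$ is the image of $\P^1$ under quadratic forms. This image is either a conic traced $1{:}1$, a line traced $2{:}1$, or a point. A line cannot occur, since $S^3$ contains no real lines and a real line image would lie in $S^3$. The real obstacle is ruling out that $U$ is constant, or that $C$ degenerates in a way making $X$ a curve.

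For that, use that $\varphi$ is birational onto the surface $X$. If $U$ were constant, the image would be $\{u\cdot V(t)\}$, which is a curve, contradicting that $X$ is a surface; the same argument applies to $V$. Similarly, the primitivity of $Q$ shows that no real linear or quadratic factor can be cancelled to lower degrees: if $U_0$ had a real factor dividing $F$, then $S\cdot\tS=c\,F$ and $U$ would be constant. Therefore $U$ and $V$ are nonconstant and of degree two. Their images are irreducible real conics in $S^3$ containing the real points $U(\R)$, which are infinitely many, so they are circles. This completes the plan.
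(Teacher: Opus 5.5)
Your proposal is correct and follows essentially the paper's route. The paper rescales so that $S\cdot S^*=\tS\cdot\tS^*$ and $\tT\cdot\tT^*=T\cdot T^*$, divides by these norms to get degree-two unit-quaternion parametrizations of two circles, and reads off $X=\bP(C\cdot D)$; you instead divide by $\sqrt a\,F$ with $a\,F^2=(S\cdot S^*)(\tS\cdot\tS^*)$, which amounts to the same normalization, and your Step~3 spells out the non-degeneracy check that the paper states in one line. One small correction: a possible minus sign in $Q_0$ is handled by replacing $U$ with $-U$ (or choosing the sign of $F$), not by the real structure.
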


\begin{proof}
By assumption $X\subset\S^3$ and thus $Q\cdot Q^*=Q_1^2+\cdots+Q_4^2=Q_0^2$,
where $Q=Q_1+Q_2\,\qi+Q_3\,\qj+Q_4\,\qk$.
Since $Q\cdot Q^*=Q_0^2$ and $Q$ is only defined up to scalar multiplication,
we can assume \Wlog that $S\cdot S^*=\tS\cdot\tS^*$ and $T\cdot T^*=\tT\cdot\tT^*$ (see \RL{basic}{a}).
This implies that $f(s):=S\cdot\tS/S\cdot S^*$ and
$g(t):=T\cdot\tT/T\cdot T^*$
are rational functions from $\R$ to $S^3$,
where we identified $S^3$ with the unit-quaternions.
Notice that $Q/\sqrt{Q\cdot Q^*}=f(s)\cdot g(t)$ is a birational map from~$\R^2$ to~$\bR(X)\subset S^3$.
As the numerators and denominators of $f$ and $g$ are of degree two,
we deduce that these rational maps parametrize the circles~$C\subset S^3$ and~$D\subset S^3$, \resp.
This concludes the proof as $X=\bP(C\cdot D)$.
\end{proof}

\begin{proposition}
\label{prp:ECD}
If $\varphi\c\P^1\times\P^1\to X\subset \S^3$
is a $\cE$-decomposable biquadratic birational morphism,
then there exist circles $C,D\subset S^3$ \st $X=\bP(C\cdot D)$.
\end{proposition}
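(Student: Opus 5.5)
By \Cref{lem:Q}, after possibly replacing $\varphi$ by $\varphi\circ\theta$, the associated quaternionic polynomial is $Q=S\cdot\tS\cdot\tT\cdot T$ or $Q=S\cdot\tT\cdot\tS\cdot T$, with $S,\tS\in\H[s]$ and $T,\tT\in\H[t]$ linear. Replacing $\varphi$ by $\varphi\circ\theta$ does not change $X$. In the first case \Cref{lem:SSTT} finishes the proof at once. So the work is to turn the interleaved form $S\cdot\tT\cdot\tS\cdot T$ into the separated form $S\cdot S'\cdot T'\cdot T$, which is exactly what \RL{ST}{c} provides.

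First I normalize the norms. Since $Q\cdot Q^*=Q_0^2$ and $Q$ is primitive, the factorization $Q\cdot Q^*=(S\cdot S^*)(\tS\cdot\tS^*)(T\cdot T^*)(\tT\cdot\tT^*)$ from \RL{basic}{a} shows that each real quadratic norm occurs squared. Hence $S\cdot S^*$ and $\tS\cdot\tS^*$ agree up to a real scalar, and likewise for $T$ and $\tT$. Each norm is irreducible with roots $a,\oa$ (resp.\ $b,\ob$); otherwise a real linear factor would divide $Q$ by \RL{uni}{a}, contradicting primitivity. Rescaling gives the hypotheses of \Cref{lem:ST}.

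Next I verify the hypotheses of \RL{ST}{c}. The parameter $b$ is fixed by the norm, but \RL{ST}{b} gives some $c$ with $S(a)\cdot\tT(c)\cdot\tS(a)\cdot T(c)=0$. I must show $c\in\{b,\ob\}$, and after swapping $b$ with $\ob$ take $c=b$. Here I use $Q(a,t)\cdot Q(a,t)^*=0$ identically. The curve $[Q(a,t)]$ lies in $\cN$, and by $\cE$-decomposability together with \Cref{lem:nopoint} it is one of the double lines $L,\oL,R,\oR$; it is not a point by \RL{Y}{D}. If $[Q(a,t)]$ were a line on which $Q(a,c)=0$ for some $c\notin\{b,\ob\}$, then $t-c$ would divide every coordinate of $Q(a,t)$. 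Then $[Q(a,\_)]$ would be parametrized in degree one, giving a $1{:}1$ map onto a line, which contradicts \Cref{lem:nopoint}. I expect pinning down $c=b$ to be the main obstacle, and I would try this degree argument first. If it fails, I would instead use the explicit intersection pattern $|L\cap R|=1$ at the points $\varphi(a,b)$.

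The remaining hypotheses are that $[S\cdot\tT(b)\cdot\tS\cdot T(b)]=[Q(s,b)]$ and $[S(a)\cdot\tT\cdot\tS(a)\cdot T]=[Q(a,t)]$ are not complex points. Both follow directly from \RL{Y}{D} applied to the corresponding fibers, since $\varphi$ is a morphism. Then \RL{ST}{c} yields $Q=S\cdot S'\cdot T'\cdot T$ with $S'\cdot S'^*=\tS\cdot\tS^*$ and $T'\cdot T'^*=\tT\cdot\tT^*$. Finally \Cref{lem:SSTT} gives circles $C,D\subset S^3$ with $X=\bP(C\cdot D)$.
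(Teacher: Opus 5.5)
Your argument is formally valid, but the detour through \RL{ST}{c} is vacuous. You stopped one step short of the paper's proof.

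The zero $Q(a,c)=0$ that \RL{ST}{b} gives you is already a contradiction. Since $Q_0^2=Q\cdot Q^*$, all five components $\varphi_0,\ldots,\varphi_4$ vanish at $(1:a;1:c)$. That is a base point, which a morphism cannot have. This is exactly the paper's proof: the interleaved form $S\cdot\tT\cdot\tS\cdot T$ never occurs for a morphism, so \Cref{lem:Q} and \Cref{lem:SSTT} finish immediately.

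Your own degree argument never uses $c\notin\{b,\ob\}$. If $Q(a,b)=0$, then $t-b$ divides $Q(a,t)$ just as well, and you get the same degree-one parametrization of a line, contradicting \Cref{lem:nopoint}. So the argument excludes every $c$, including $c=b$. In particular it shows that the first hypothesis of \RL{ST}{c}, namely $S(a)\cdot\tT(b)\cdot\tS(a)\cdot T(b)=0$, can never hold for a morphism.

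Your chain of deriving $c\in\{b,\ob\}$ and then applying \RL{ST}{c} is correct only because its premises are contradictory. The obstacle you expected in pinning down $c=b$ does not exist. \RL{ST}{c} is meant for maps with base points. The paper uses it only in \Cref{prp:E4}, for Perseus and CH1 cyclides, where $\varphi$ is undefined at $(p,q)$ and $(\op,\oq)$.

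Your normalization of the norms is worth keeping. It supplies the hypotheses of \Cref{lem:ST} that the paper's appeal to \RL{ST}{b} leaves implicit. Everything after invoking \RL{ST}{b} should be replaced by the base-point contradiction.
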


\begin{proof}
Let $(\varphi_0,\ldots,\varphi_4)$ denote the components of $\varphi$
and suppose by contradiction that $Q=S\cdot\tT\cdot \tS\cdot T$
is the associated quaternionic polynomial of $\varphi$ for some linear polynomials
$S,\tS\in\H[s]$ and $T,\tT\in\H[t]$.
It follows from \RL{ST}{b} that there exist $a,c\in \C$
\st $Q(a,c)=0$, which implies that $\varphi_0(r)=\cdots=\varphi_4(r)=0$ for $r=(1:a;1:c)$.
We arrived at a contradiction as $\varphi$ is a morphism and thus
everywhere defined.
The proof is now concluded by Lemmas~\ref{lem:Q} and~\ref{lem:SSTT}.
\end{proof}

\section{Biquadratic \texorpdfstring{$\cU$}{U}-decomposable morphisms}
\label{sec:U}

Suppose that $\varphi\c\P^1\times\P^1\to X\subset \S^3$ is a biquadratic birational morphism.
The goal of the current section is to show \Cref{prp:UAB},
which states that if $\varphi$ is $\cU$-decomposable, then
there exist circles $A,B\subset\R^3$ \st $X=\bS(A+B)$.

Recall from \textsection\ref{sec:model} that an irreducible conic $C\subset\S^3$
is a circle if $\bR(C)\subset\S^3$ is a circle.
Notice that the following lemma remains true if we interchange ``left'' and ``right''.

\begin{lemma}
\label{lem:C}
Suppose that $\varphi\c\P^1\times\P^1\to X\subset\S^3$ is a biquadratic birational morphism.
\begin{claims}
\item\label{lem:C:a}
If $C\subset X$ is a circle \st $C\nsubseteq\Sing X$,
then $C$ is a real left or right image.
If $C$ is a real left or right image,
then $C$ is a circle.

\item\label{lem:C:b}
If $C_\alpha$ is a left image associated to $\alpha\in\P^1$
\st either $\alpha$ is general or $\varphi^{-1}(C_\alpha)$ does not contain two complex curves,
then there exists a finite subset $P\subset\P^1$
\st for all $\beta\in\P^1\setminus P$,
\[
|C_\alpha\cap C_\beta|=
\begin{cases}
0 & \text{if $C_\beta$ is the left image associated to $\beta$, and}\\
1 & \text{if $C_\beta$ is the right image associated to $\beta$}.
\end{cases}
\]
\end{claims}
\end{lemma}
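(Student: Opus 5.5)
Both assertions are proved by intersection numbers on $\P^1\times\P^1$ via \Cref{lem:Y}, together with the facts that circles are real irreducible conics and that $\varphi$ is birational.

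\ref{lem:C:a}
We first show that a real left or right image $C$ is a circle. Let $C$ be associated to a real $\alpha\in\P^1_\R$. By \Cref{lem:nopoint}, $C$ is either an irreducible conic or a complex double line. A real line cannot lie in $\S^3$, since the signature of $\S^3$ forbids real lines. A double line that is invariant under $\sigma_X$ would therefore be a real line, which is impossible. Hence $C$ is a real irreducible conic in $\S^3$.

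It remains to see that $C$ has infinitely many real points. The fiber over the real point $\alpha$ is defined over $\R$ and has real points. Each real point of the fiber maps to a real point of $X$, because $\varphi$ is defined over $\R$ and is a morphism. So $\bR(C)$ is an infinite set of real points on a conic in $S^3$, that is, a circle.

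For the converse, let $C\subset X$ be a circle with $C\nsubseteq\Sing X$. Let $F\subset\P^1\times\P^1$ be the strict transform $\varphi^{-1}(C)$, a curve of some bidegree $(a,b)$. Since $C\nsubseteq\Sing X$, the birational morphism $\varphi$ is generically injective over $C$, so $\varphi|_F$ is birational onto $C$ and $d=1$. \RL{Y}{E} then gives $2=\deg C=2(a+b)$, so $(a,b)\in\{(1,0),(0,1)\}$. Thus $F$ is a left or right fiber, and it is real because $C$ is real. The main subtlety here is justifying that there is a unique component of $\varphi^{-1}(C)$ mapping birationally onto $C$. This is where $C\nsubseteq\Sing X$ is used: a general point of $C$ is a smooth point of $X$, hence has exactly one preimage, since $\varphi$ is birational and finite there.

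\ref{lem:C:b}
The plan is to reduce to fibers and use \RL{Y}{A}. Distinct left fibers $A_\alpha,A_\beta$ are disjoint, while a left fiber and a right fiber meet transversally in exactly one point. So $C_\alpha\cap C_\beta$ is the image of $A_\alpha\cap B_\beta$, which is a single point, together with the extra points coming from non-injectivity of $\varphi$.

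Let $N\subset\P^1\times\P^1$ be the curve, possibly empty, where $\varphi$ fails to be injective, namely the preimage of the non-normal or singular locus. It suffices to show that for $\beta$ outside a finite set, $\varphi(A_\alpha)\cap\varphi(A_\beta)=\varnothing$ and $\varphi(A_\alpha)\cap\varphi(B_\beta)$ is one point.

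The set $\varphi^{-1}(C_\alpha)$ consists of $A_\alpha$ together with finitely many points and possibly some further curves. The hypothesis is exactly what controls those further curves: if $\alpha$ is general, or if $\varphi^{-1}(C_\alpha)$ contains no second complex curve, then $\varphi^{-1}(C_\alpha)=A_\alpha\cup P'$ with $P'$ finite. Now take $P$ to be the finite set of $\beta$ whose left or right fiber contains a point of $P'$, together with $\beta=\alpha$. For every other $\beta$, the preimage of $C_\alpha\cap C_\beta$ lies in $A_\alpha\cap A_\beta=\varnothing$, or in $A_\alpha\cap B_\beta$, which is one point.

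I expect the main obstacle to be justifying, for general $\alpha$, that $\varphi^{-1}(C_\alpha)$ contains no extra curve. The argument I would use is that only finitely many curves in $\P^1\times\P^1$ are contracted or double-covered onto curves in $\Sing X$. Since $\Sing X$ is a curve, only finitely many left images can lie inside it.
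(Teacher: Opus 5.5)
Your proposal is correct and takes essentially the same route as the paper. For (a) you use the same three ingredients: uniqueness of the preimage curve over a circle not contained in $\Sing X$, then \RL{Y}{E}, then \Cref{lem:nopoint} together with the absence of real lines in $\S^3$. For (b) you use finiteness of $\varphi^{-1}(C_\alpha)\setminus A_\alpha$ (which the paper phrases via its non-isomorphism locus $G$) combined with the fiber intersection numbers from \RL{Y}{A}; your explicit exceptional set $P$ is a slightly cleaner rendering of the paper's ``$\beta$ general'' step.
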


\begin{proof}
\ref{lem:C:a}
Suppose that $F\subset\P^1\times\P^1$ is the curve \st $C=\varphi(F)$.
Since $\varphi$ is birational and $C\nsubseteq\Sing X$, we find that
the general fiber of the restriction $\varphi|_F$ has cardinality one.
If $C$ is a circle, then $F$ has either bidegree~$(1,0)$
or~$(0,1)$ by \RL{Y}{E} so that $C$ is a real left image and real right image, \resp.
Conversely, if $C$ is a real left or right image, then
$|C_\R|=\infty$ and $C$ is either a circle or double line by \Cref{lem:nopoint}.
A complex line in $\S^3$ must be non-real and thus $C$ is a circle as claimed.

\ref{lem:C:b}
By assumption, $\varphi$ is a birational morphism between projective varieties
and thus there exists a possibly reducible
curve $G\subset\P^1\times\P^1$ \st
$\P^1\times\P^1\setminus G\cong X\setminus\varphi(G)$.
By definition, $C_\alpha$ is the image of the left fiber $F_\alpha\subset\P^1\times\P^1$
and $C_\beta$ is the image of either the left or right general fiber $F_\beta\subset\P^1\times\P^1$.
By \RL{Y}{D}, both $C_\alpha$ and $C_\beta$ are complex curves.
If $\alpha$ is general, then $F_\alpha\not\subset G$
and thus $\varphi^{-1}(C_\alpha)\setminus F_\alpha$ is finite.
If $\alpha$ is not general, then
$\varphi^{-1}(C_\alpha)\setminus F_\alpha$ is finite by the assumption
that the preimage of $C_\alpha$ does not contain two complex curves.

First suppose that $C_\beta$ is a left image.
Let us assume by contradiction that $|C_\alpha\cap C_\beta|>0$.
Since $|F_\alpha\cap F_\beta|=0$,
there exist complex curves $A,B\subset G$
\st
$a\in F_\alpha\cap A$, $b\in F_\beta\cap B$
and $\varphi(a)=\varphi(b)\in C_\alpha\cap C_\beta$.
We know from \RL{Y}{D} that neither $\varphi(A)$ nor $\varphi(B)$
consists of a single complex point.
Moreover, $\varphi(B)\neq C_\alpha$
since
$B\neq F_\alpha$ and
$\varphi^{-1}(C_\alpha)\setminus F_\alpha$ is finite.
By assumption $\beta$ is general and
thus we deduce that $|C_\alpha\cap C_\beta|=0$ as asserted.

Finally suppose that $C_\beta$ is a right image.
Since $|F_\alpha\cap F_\beta|=1$, we find that $|C_\alpha\cap C_\beta|\geq 1$.
Now assume by contradiction that $|C_\alpha\cap C_\beta|\neq 1$.
In this case there exist complex curves $A,B\subset G$
\st
$a\in F_\alpha\cap A$, $b\in F_\beta\cap B$
and $\varphi(a)=\varphi(b)\in C_\alpha\cap C_\beta$.
The same argument as before
leads to a contradiction
and therefore $|C_\alpha\cap C_\beta|=1$ as asserted.
\end{proof}

We call two circles in $\R^3$ \df{parallel} if their spanning planes are parallel.

\begin{lemma}
\label{lem:LLpar}
If $C,C'\subset\S^3$ are circles and
there exist complex conjugate lines $R,\oR\subset\cU$
\st
$|C\cap R|=|C'\cap\oR|=1$
and
$(1:0:0:0:1)\notin C\cup C'$,
then
$\bR(\pi(C))$ and $\bR(\pi(C'))$ are parallel circles in~$\R^3$.
\end{lemma}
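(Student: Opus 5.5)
The plan is to translate everything through the stereographic projection $\pi\c\S^3\dto\P^3$, $x\mapsto(x_0-x_4:x_1:x_2:x_3)$, whose center is $c:=(1:0:0:0:1)$. The Euclidean absolute $\cU=\set{x\in\S^3}{x_0=x_4}$ is the cone of lines in $\S^3$ through $c$. Under $\pi$ these lines are contracted: a complex line $R\subset\cU$ through $c$ is mapped to a single complex point $\pi(R)$ in the plane at infinity $\{x_0=0\}\subset\P^3$. Indeed, if $x\in\cU$, then $x_0-x_4=0$ and $x_1^2+x_2^2+x_3^2=x_0^2-x_4^2=0$. Hence $\pi(\cU\setminus\{c\})$ is the absolute conic
\[
\Omega:=\set{(0:x_1:x_2:x_3)}{x_1^2+x_2^2+x_3^2=0}.
\]
Complex conjugate lines $R,\oR$ go to complex conjugate points $r:=\pi(R)$ and $\overline{r}=\pi(\oR)$ on $\Omega$.

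First step: since $c\notin C$ and $C$ is an irreducible real conic in $\S^3$, the image $\pi(C)$ is an irreducible conic in $\P^3$, namely the projective closure of the circle $\bR(\pi(C))\subset\R^3$. The point $C\cap R$ is different from $c$, so it maps to $r$, and therefore $r\in\pi(C)$. Since $C$ is real, $\pi(C)$ is real and also contains $\overline{r}$. The same argument gives $r,\overline{r}\in\pi(C')$ from $|C'\cap\oR|=1$.

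Second step: a circle in $\R^3$ meets the plane at infinity exactly in two conjugate points of $\Omega$ (the circular points of its plane). These two points span the line at infinity of the spanning plane. So $\pi(C)$ and $\pi(C')$ both have their line at infinity equal to the real line through $r$ and $\overline{r}$. We need $r\neq\overline{r}$, which holds because $\Omega$ has no real points. Two affine planes in $\R^3$ with the same line at infinity are parallel (or equal), so $\bR(\pi(C))$ and $\bR(\pi(C'))$ are parallel circles.

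The main obstacle is justifying rigorously that every point of $R\setminus\{c\}$ maps to the same point $r$, and that $\pi(C)$ really is a circle, i.e.\ that it is not degenerated by passing through the center. For the first, I would note that $R$ is a line through $c$ (every line in $\cU$ passes through the vertex $c$). In coordinates $R=\{c+\lambda w\}$ with $w$ satisfying $w_0=w_4$, and then $\pi(c+\lambda w)=(0:w_1:w_2:w_3)$ is independent of $\lambda$. For the second, $c\notin C$ makes $\pi|_C$ a linear projection from a point off the plane of $C$, hence an isomorphism onto a conic, and the real points of that conic are finite (in $\R^3$) because they avoid $\cU$ away from $c$.
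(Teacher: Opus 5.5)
Your proposal is correct and follows essentially the paper's route: both show that $\pi(C)$ and $\pi(C')$ are irreducible real conics through the conjugate pair $\pi(R),\pi(\oR)$ on the absolute conic $\pi(\cU)$, so their spanning planes share the line at infinity through these two points and are therefore parallel. Two small points deserve an explicit sentence. First, the center $(1:0:0:0:1)$ lies off the plane of $C$ because that plane meets $\S^3$, which contains no planes, exactly in $C$. Second, your boundedness argument alone only shows that $\bR(\pi(C))$ is an ellipse; it is a circle because $\pi(C)$ passes through $\pi(R),\pi(\oR)\in\pi(\cU)$, which are then the circular points of its plane, and this is how the paper argues it.
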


\begin{proof}
We observe that~$\pi(C)$ and~$\pi(C')$ are irreducible conics with infinitely many real points
that both meet the conic~$\pi(\cU)=\set{y\in\P^3}{y_0=y_1^2+y_2^2+y_3^2=0}$
in the complex conjugate points $\pi(R)$ and $\pi(\oR)$.
Suppose that $S_c\subset\R^3$ is a sphere with radius $c_0$ and center $(c_1,c_2,c_3)$ so that
\[
\bP(S_c):=\set{y\in\P^3}{-(c_0\,y_0)^2+(y_1-c_1\,y_0)^2+(y_2-c_2\,y_0)^2+(y_3-c_3\,y_0)^2=0}.
\]
We deduce that there exists $c,c'\in\R^3$ and hyperplanes~$H,H'\subset\R^3$
\st $\pi(C)=\bP(S_c\cap H)$ and $\pi(C')=\bP(S_{c'}\cap H')$.
Hence, the stereographic projections $\bR(\pi(C))$ and $\bR(\pi(C'))$ are circles in~$\R^3$.
Since $\bP(H)\cap\bP(H')$ is the line spanned by $\pi(R)$ and $\pi(\oR)$ in the plane at infinity,
we find that $H$ and $H'$ are parallel planes.
This concludes the proof as $\bR(\pi(C))=S_c\cap H$ and $\bR(\pi(C'))=S_{c'}\cap H'$.
\end{proof}

\begin{proposition}
\label{prp:UAB}
If $\varphi\c\P^1\times\P^1\to X\subset \S^3$ is a $\cU$-decomposable biquadratic birational morphism,
then there exist circles $A,B\subset\R^3$ \st
$X=\bS(A+B)$.
\end{proposition}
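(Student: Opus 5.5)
The plan is to show that, after stereographic projection from the vertex $c:=(1:0:0:0:1)$ of the Euclidean absolute $\cU$, all real left images become mutually parallel circles and all real right images become mutually parallel circles. From this parallel structure I will then construct the translational decomposition $A+B$.

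\textbf{Step 1: the four lines pass through $c$.} By $\cU$-decomposability we have $X\cap\cU=L\cup\oL\cup R\cup\oR$, with $L,\oL$ left images and $R,\oR$ right images. Every line in the cone $\cU$ passes through its vertex $c$. Hence $c\in L\cap\oL\cap R\cap\oR$, and $\pi$ maps these four lines to the two complex conjugate points at infinity $\pi(L)=\pi(R)$ and $\pi(\oL)=\pi(\oR)$ on the absolute conic $\pi(\cU)$. (After relabeling we may have $\pi(L)=\pi(\oR)$ instead; this only swaps names.) Consequently $\pi(X)\subset\P^3$ has degree $8-4=4$ or less, because projecting from $c$ drops the degree by the multiplicity of $X$ at $c$. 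The real surface $\bR(\pi(X))$ therefore meets the plane at infinity only in the absolute conic, with multiplicities.

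\textbf{Step 2: the images are parallel circles.} Take general real left images $C_\alpha,C_{\alpha'}$. By \RL{C}{a} they are circles not passing through $c$, because $c$ lies on only finitely many images. By \RL{C}{b}, with the roles of left and right interchanged, each meets the right image $R$ in exactly one point, and likewise meets $\oR$. \Cref{lem:LLpar} then shows that $\bR(\pi(C_\alpha))$ and $\bR(\pi(C_{\alpha'}))$ are parallel circles; in fact all their planes contain the fixed line at infinity spanned by $\pi(R)$ and $\pi(\oR)$. The same argument applied with $L,\oL$ shows that all real right images project to mutually parallel circles. Their common plane direction is the polar line of $\pi(L)\pi(\oL)$.

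\textbf{Step 3: recognise a translation surface.} Let $Z:=\bR(\pi(X))$, with circle families $\{A_\alpha\}$ (left images) and $\{B_\beta\}$ (right images), where $A_\alpha\cap B_\beta=\{\psi(\alpha,\beta)\}$ and $\psi:=\bR\circ\pi\circ\varphi$. I want to show that for fixed $\beta_0$ and any $\beta$, the circle $B_\beta$ is a translate of $B_{\beta_0}$, and that the translation vector $\psi(\alpha,\beta)-\psi(\alpha,\beta_0)$ is independent of $\alpha$. This is the main obstacle, since parallel circles need not be translates: they could have different radii. To handle it, I would work with the associated quaternionic polynomial and the affine parametrization $\psi=(\varphi_1,\varphi_2,\varphi_3)/(\varphi_0-\varphi_4)$. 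Because the denominator vanishes exactly on the preimage of $X\cap\cU$, it equals $\lambda\,(s-a)(s-\oa)(t-b)(t-\ob)=\lambda\,A(s)B(t)$, a product of real quadratics. Write $\psi=N(s,t)/(A(s)B(t))$ with $N$ of bidegree $(2,2)$. The claim then becomes that $N=A(s)\,n_1(t)+B(t)\,n_2(s)$ up to adjustment, so that
\[
\psi=\frac{n_1(t)}{B(t)}+\frac{n_2(s)}{A(s)}.
\]
I will try to prove this decomposition by showing that the coefficient of $s^2t^2$-type interaction vanishes. Concretely, I would use that each fixed-$s$ curve is a circle lying in a plane of fixed direction, which forces $N(s,t)$ modulo $B(t)$ to depend on $s$ only through $A(s)$. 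Making this precise is the step I expect to need the most care; a possible route is partial fractions in $t$ with residues at $b$, using that the residue point lies at infinity in the common direction $\pi(R)$. Once the decomposition holds, $A:=\{n_2/A\}$ and $B:=\{n_1/B\}$ are conics in $\R^3$, and they are circles because by Step 2 each is a translate of an image circle. Then $Z=A+B$, and $X=\bS(Z)$ by birationality of $\pi$.
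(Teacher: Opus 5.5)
Your overall plan is the paper's. You project from the vertex $c$ of $\cU$ and observe that $\varphi_0-\varphi_4$ vanishes exactly on the two left and two right fibers mapping into $X\cap\cU$, so it equals $\lambda A(s)B(t)$. You then aim for the splitting $N=A(s)\,n_1(t)+B(t)\,n_2(s)$, which is the paper's normal form $f=(a_0b_0:a_1b_0+a_0b_1:\cdots)$. But this splitting is the entire content of the proposition, and you leave it as an intention. The route you sketch does not reach it. That the residue at $t=b$ lies in the direction $p:=\pi(R)$ only says $N(s,b)=h(s)\,p$ for some polynomial $h$ of degree at most two, which pins down $N$ modulo $B(t)$. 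To get your form you also need $h$ to be a constant multiple of $A(s)$. Neither this observation nor the parallelism of each family taken separately supplies that.

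The missing step is to play the two contracted families against each other. The left fiber $s=a$ is mapped onto a line through the centre $c$, so $\pi\circ\varphi$ is constant on it: $N(a,t)=g(t)\,q$ with $q:=\pi(L)$, and likewise $N(\oa,t)$ is a multiple of $\oq$. Hence $N(a,b)$ is a multiple of both $p$ and $q$, and $N(\oa,b)$ is a multiple of both $p$ and $\oq$. Since $p,\op,q,\oq$ are pairwise distinct, $h(a)=h(\oa)=0$, so $h=\kappa A$. Dividing $N$ by $B(t)$ in the variable $t$ then gives $N=B(t)\,m(s)+A(s)\,\ell(t)$ with $\ell$ linear, that is, $\psi=m(s)/A(s)+\ell(t)/B(t)$. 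This is what the paper extracts from $f(\alpha,y)=p$ and $f(x,\beta)=q$.

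Your Step~1 works directly against this. Distinct lines through the vertex of the cone $\cU$ project to distinct points of the conic $\pi(\cU)$, so $\pi(L)=\pi(R)$ would mean $L=R$, which is impossible (compare the proof of Proposition~\ref{prp:EU}). With $p=q$, the evaluation at $(a,b)$ gives nothing. Your aside that $\pi(X)$ meets the plane at infinity only in the absolute conic is also false, though you never use it. Once the splitting is in place, your conclusion that $n_1/B$ and $n_2/A$ parametrize circles, being translates of projected images, is fine.
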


\begin{proof}
By definition, $X\cap\cU$ consists of two complex conjugate left images $L$, $\oL$
and two complex conjugate right images $R$, $\oR$ that meet at the center
$(1:0:0:0:1)$ of stereographic projection $\pi\c\S^3\dto\P^3$.
Notice that $\pi(\cU)=\set{z\in\P^3}{z_0=z_1^2+z_2^2+z_3^2=0}$ and
let $p,\op,q,\oq\in\pi(\cU)$ \st $\pi(R)=\{p\}$, $\pi(\oR)=\{\op\}$,  $\pi(L)=\{q\}$ and $\pi(\oL)=\{\oq\}$.
By \RL{Y}{B} the preimage of $\cU$ consists of two left fibers and two right fibers
and thus the hypothesis of \RL{C}{b} is satisfied.
We deduce from \RLS{C}{a}{C}{b} that
for almost all $u\in\P^1_\R$ the real left image $C_u\subset X$
is a circle that meets both~$R$ and~$\oR$ outside $R\cap \oR$.
Hence, its stereographic projection $\pi(C_u)$ meets the conic $\pi(\cU)$
in the two complex conjugate points $p$ and $\op$.
Similarly, for almost all $v\in\P^1_\R$ the real right image
$C'_v\subset X$ is a circle and its projection $\pi(C'_v)$ meets $\pi(\cU)$ in
the complex conjugate points $q$ and $\oq$.

Let $Y\subset\P^3$ denote the Zariski closure of $\pi(X)$
and consider the birational biquadratic map
$f:=\pi\circ\varphi\c\P^1\times\P^1\dto Y$.
As the complex lines in $\cU$ are non-real,
there exists non-real $\alpha,\beta\in\P^1$ \st
$\varphi(\alpha)=\varphi(\oalpha)=\varphi(\beta)=\varphi(\obeta)=(1:0:0:0:1)$.
We deduce that $f(\alpha,y)=p$ and $f\left(\oalpha,y\right)=\op$
for all $y\in\P^1\setminus\{\beta,\obeta\}$.
This implies that there exist forms $a_0,a_1,a_2,a_3\in\R[x]$ and $b_0,b_1,b_2,b_3\in\R[x,y]$
\st $a_0(\alpha)=a_0(\oalpha)=0$ and
\begin{align*}
f(x,y)=(a_0(x)\,b_0(x,y)&:a_1(x)\,b_0(x,y)+a_0(x)\,b_1(x,y)\\
                        &:a_2(x)\,b_0(x,y)+a_0(x)\,b_2(x,y)\\
                        &:a_3(x)\,b_0(x,y)+a_0(x)\,b_3(x,y)).
\end{align*}
In particular, notice that $f(\alpha,y)=(a_0(\alpha):a_1(\alpha):a_2(\alpha):a_3(\alpha))=p$
for all $y\in\P^1\setminus\{\beta,\obeta\}$.
Similarly, $f(x,\beta)=q$ and $f\left(x,\obeta\right)=\oq$ for all $x\in\P^1\setminus\{\alpha,\oalpha\}$.
Since the components of $f$ are of bidegree $(2,2)$ the same argument as before shows that
$b_0$, $b_1$, $b_2$, $b_3$ are quadratic forms in $\R[y]$ and thus
\begin{multline*}
f(x,y)=(a_0(x)\,b_0(y):
a_1(x)\,b_0(y)+a_0(x)\,b_1(y):
a_2(x)\,b_0(y)+a_0(x)\,b_2(y):\\
a_3(x)\,b_0(y)+a_0(x)\,b_3(y)).
\end{multline*}
It follows from \Cref{lem:LLpar} that there exist circles $A,B\subset \R^3$
and birational morphisms $a\c \P^1\to\bP(A)$ and $b\c\P^1\to\bP(B)$
\st
\[
a(x)=(a_0(x):\cdots:a_3(x))
\quad\text{and}\quad
b(y)=(b_0(y):\cdots:b_3(y)).
\]
Since $a_i/a_0+b_i/b_0=(a_i\,b_0+a_0\,b_i)/a_0\,b_0$ for $i\in\{1,2,3\}$ and
\[
\set{
\left(\frac{a_1}{a_0},\frac{a_2}{a_0},\frac{a_3}{a_0}\right)(1:s)+
\left(\frac{b_1}{b_0},\frac{b_2}{b_0},\frac{b_3}{b_0}\right)(1:t)
}{s,t\in\R}
\]
is Zariski dense in $A+B$, it follows that $Y=\bP(A+B)$.
We concluded the proof as $f=\pi\circ\varphi$ and thus $X=\bS(A+B)$ by definition.
\end{proof}

\section{The proof of Theorem II}
\label{sec:proof}

In this section, we conclude the proofs of \Cref{thm:c} and \Cref{cor:c},
which depend on the ``external'' Theorems~\ref{thm:A} and \ref{thm:B} stated below.
The proof of \Cref{thm:c}\ref{thm:c:d} uses \Cref{prp:ABCD}
in Appendix~\ref{app:ABCD}, which is self-contained.

\begin{theoremext}
\label{thm:A}
Suppose that $X\subset\S^3$ is a $\lambda$-circled surface
of degree $d$ \st $\lambda\geq 2$.
\begin{claims}
\item\label{thm:A:a}
$d\in\{2,4,8\}$ and if $d$ equals 4 or 8,
then $\lambda\leq 6$ and $\lambda=2$, \resp.
\item\label{thm:A:b}
If $d=8$, then there exists a biquadratic birational morphism $\P^1\times\P^1\to X$.
\end{claims}
\end{theoremext}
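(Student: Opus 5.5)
This theorem is labelled ``external'' and depends on \cite{2001}, so I plan to assemble it from the classical structure theory of celestial surfaces rather than from the quaternionic machinery of the previous sections. The starting point is \citep[Theorem~1]{2001}: a surface covered by two (analytic) families of circles is algebraic. This lets me treat $X\subset\S^3$ as a real algebraic surface carrying two algebraic families of conics (circles in $\S^3$ are hyperplane sections of conic type).

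For \ref{thm:A:a}, first, if $X$ is a hyperplane section of $\S^3$ (a sphere), then $d=2$ and there is nothing more to prove. Otherwise I pass to the minimal resolution $\tilde X$ and use the two conic families to show that $\tilde X$ is rational. The degree of the family of circles through a general point controls the linear system of hyperplane sections. I will express the hyperplane class $h$ on a suitable blow-up of $\P^1\times\P^1$ or $\P^2$ and write the two circle pencils as classes $f_1,f_2$ with $f_i^2=0$ and $h\cdot f_i=2$. The constraint that $X$ lies on the quadric $\S^3$ and is not contained in a hyperplane, together with the Hodge index theorem applied to $h,f_1,f_2$, then bounds $h^2=d$. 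I expect this to yield $d\in\{2,4,8\}$. When $h=2f_1+2f_2$ on $\P^1\times\P^1$ with no base points, $d=8$. The quartic case corresponds to Darboux cyclides, which are anticanonically embedded degree 4 del Pezzo surfaces, possibly singular, in $\P^4$. Counting conic classes $f$ with $f^2=0$ and $-K\cdot f=2$ that are real with real points gives $\lambda\leq 6$, as in \cite{2021circ,1980}. For $d=8$, any circle class $f$ satisfies $h\cdot f=2$ with $h=2(\ell_0+\ell_1)$, which forces $f\in\{\ell_0,\ell_1\}$, hence $\lambda=2$.

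For \ref{thm:A:b}, when $d=8$ the analysis above identifies the smooth model with $\P^1\times\P^1$ and $h$ with bidegree $(2,2)$, with no base points. The map given by the complete-or-sub linear system of $h$ is then a biquadratic birational morphism onto $X$.

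I expect the main obstacle to be the lattice and adjunction argument that excludes degrees other than $2,4,8$. This includes ruling out base points of the hyperplane system in the degree 8 case, where birationality rather than a cover must also be checked. I would try to handle it via the classification of surfaces with two conic bundles in \cite{2021circ}, citing that result directly if necessary.
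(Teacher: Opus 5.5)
You end up where the paper does. The paper gives no argument of its own for this statement: it cites \citep[Theorem~1]{2021circ} for (a) and \citep[Theorems~5 and~8]{2001} for (b), so your fallback of citing \cite{2021circ} matches it. However, the independent route you sketch for (a) fails at its central step.

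With $f_i^2=0$, $h\cdot f_i=2$ and $m:=f_1\cdot f_2\geq 1$, the Gram determinant of $h,f_1,f_2$ is $m(8-dm)$. The Hodge index theorem therefore gives only $dm\leq 8$, with equality exactly when $h\equiv\frac{2}{m}(f_1+f_2)$. That bounds $d\leq 8$ and excludes nothing else. You can remove odd degrees with an ingredient you do not mention: $\S^3$ is a smooth quadric threefold whose Picard group is generated by the hyperplane class, so $X=\S^3\cap V(F)$ and $d=2\deg F$.

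But $d=6$ passes every numerical test available in your set-up. On $\P^2$ blown up in three points, take $h=-K$ and $f_i=\ell-e_i$. Then $h^2=6$, $h\cdot f_i=2$, $f_i^2=0$ and $f_i\cdot f_j=1$, with $h$ very ample and $h^0(h)=7\geq 5$. Ruling out a degree-$6$ surface $\S^3\cap V(F_3)$ carrying two pencils of conics needs genuinely geometric input about how $X$ sits in the quadric, and that is what the citation supplies. Your sentence ``I expect this to yield $d\in\{2,4,8\}$'' has no argument behind it. The bound $\lambda\leq 6$ for $d=4$ is likewise only cited, not derived.

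Your framework does deliver (b), together with $\lambda=2$ for $d=8$, without \cite{2001}. This holds once you justify the set-up: each circle family must be a base-point-free pencil with $f_i^2=0$ on a suitable smooth model $Y$, which requires excluding families with several members through a general point. The argument runs as follows.
\begin{itemize}
\item $d=8$ forces equality in $dm\leq 8$, so $m=1$ and $h\sim 2f_1+2f_2$, since numerical and linear equivalence coincide on rational surfaces.
\item The two pencils define a birational morphism $\gamma\colon Y\to\P^1\times\P^1$ of degree $f_1\cdot f_2=1$ with $h=\gamma^*\mathcal{O}(2,2)$.
\item Since $H^0(Y,h)=\gamma^*H^0(\mathcal{O}(2,2))$, the morphism $Y\to X$ factors as $\gamma$ followed by the morphism given by a base-point-free subsystem of $|\mathcal{O}(2,2)|$.
\end{itemize}
In general $Y$ is a blow-up of $\P^1\times\P^1$ rather than $\P^1\times\P^1$ itself. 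This step uses the equality case of the Hodge inequality, not the converse implication you state (``when $h=2f_1+2f_2$ \dots\ $d=8$''). So the degree-$8$ base-point and birationality issues you flag as the main obstacle are the easy part. The real missing step is the exclusion of $d=6$ in (a).
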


\begin{proof}
See \citep[Theorem~1]{2021circ} and \citep[Theorems~5 and 8]{2001}.
\end{proof}

\begin{theoremext}
\label{thm:B}
Suppose that $X\subset\S^3$ is a celestial surface of degree~$d\in\{2,4\}$.
\begin{claims}
\item\label{thm:B:a}
If $X=\bS(A+B)$ for some circles $A,B\subset\R^3$, then $d=2$.
\item\label{thm:B:b}
If $X=\bP(C\cdot D)$ for some circles $C,D\subset S^3$,
then $d=4$ and $X$ is either a Perseus cyclide, ring cyclide or CH1 cyclide.
\item\label{thm:B:c}
If $X$ is a ring cyclide, then there exists a M\"obius transformation $\nu\in\aut\S^3$
such that $\nu(X)=\bP(C\cdot D)$ for some circles $C,D\subset S^3$.
\item\label{thm:B:d}
If $d=4$ and $\bR(X)$ is smooth, then $X$ is $\lambda$-circled \st $\lambda\in\{2,4,5,6\}$.
\item\label{thm:B:e}
If $X$ is a Perseus cyclide or CH1 cyclide,
then there exists
a M\"obius transformation~$\nu\in\aut\S^3$ and
an $\cE$-decomposable birational map $\varphi\c\P^1\times\P^1\dto \nu(X)$
with components $(\varphi_0,\ldots,\varphi_4)$ of bidegree (2,2)
\st
\[
\Lambda:=\set{r\in\P^1\times\P^1}{\varphi_0(r)=\cdots=\varphi_4(r)=0}=\{(p,q),(\op,\oq)\},
\]
and
$X\cap\cE=L_p\cup L_\op\cup R_q\cup R_\oq$,
where
$L_p$ and $L_\op$ are complex lines and left images associated to $p$ and $\op$, \resp, and
$R_q$ and $R_\oq$ are complex lines and right images associated to $q$ and $\oq$, \resp.
\end{claims}
\end{theoremext}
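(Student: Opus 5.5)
The statement is Theorem~\ref{thm:B}, an ``external'' result whose proof the overview says is delayed to Appendix~\ref{app:B}. The plan is to prove each assertion by reducing it to the classification of Darboux cyclides in \cite{2023trans}, together with explicit normal forms for $A+B$ and $C\cdot D$. The working model throughout is $\S^3\subset\P^4$. We use two facts: a Darboux cyclide is the intersection of $\S^3$ with a further quadric, and the number $\lambda$ of circles through a general point, as well as smoothness of $\bR(X)$, is determined by the real structure of the singular locus and the configuration of complex lines.

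For \ref{thm:B:a}, write $A$ and $B$ in normal form. Translate and rotate so that $A$ is centred at the origin in a coordinate plane, and let $B$ be a circle in a plane at angle $\theta$, displaced by a vector $w$. The surface $A+B$ is the Zariski closure of the image of the polynomial-type parametrization built from $a(x)$ and $b(y)$, exactly as in the proof of \Cref{prp:UAB}. We compute $\deg\bS(A+B)$ by intersecting with two general hyperplanes of $\P^4$. Equivalently, we count the base points of the $(2,2)$ linear system: $\bS$ of a degree $e$ surface in $\R^3$ has degree $2e$ minus contributions at the absolute conic. If $A$ and $B$ are coplanar or parallel-and-coaxial, the sum is a planar region or a quadric of revolution, so $d=2$ follows. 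In the generic case $d=8$, not $4$, since the pencils meet $\cU$ only at the center (cf.~\Cref{prp:UAB}). In the intermediate case, $d=4$ would force a quartic cyclide containing two families of translated circles. We exclude this by noting that such a cyclide would contain the double lines $L,\oL,R,\oR$ of $\cU$ through the center, whose total degree already exceeds $4\cdot 1$ counted with the required multiplicity. So in degree $\{2,4\}$ only $d=2$ survives.

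For \ref{thm:B:b}, use \Cref{lem:SSTT} in reverse. The product $C\cdot D$ has the associated polynomial $S\cdot\tS\cdot\tT\cdot T$. If its degree is below $8$, the map $\P^1\times\P^1\dto X$ has base points. By \RL{ST}{b}-type arguments these come in a conjugate pair $(p,q),(\op,\oq)$, which drops the degree to $8-2\cdot 2=4$; no other configuration is possible, since $d=2$ would force $X$ to be a sphere, contradicting the four generators of $X\cap\cE$. Then $X\cap\cE$ consists of four complex lines meeting as in Clifford's configuration. We read off the singular locus and compare with the list in \cite{2023trans}. Two non-real isolated singularities give a ring cyclide, and the remaining line configurations give Perseus and CH1 cyclides.

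Assertions \ref{thm:B:c}, \ref{thm:B:d} and \ref{thm:B:e} are the main obstacle: they genuinely require the normal forms of \cite{2023trans}. For \ref{thm:B:c}, take the standard torus, which is $\bR(C\cdot D)$ for Villarceau-type circles up to Möbius transformation; every ring cyclide is Möbius equivalent to a torus by Dupin's classification. For \ref{thm:B:d}, quote the enumeration of circle counts for smooth Darboux cyclides. For \ref{thm:B:e}, use the normal form of Perseus and CH1 cyclides to exhibit an explicit $S\cdot\tS\cdot\tT\cdot T$, as in \Cref{exm:CH1}. We then verify directly, via \Cref{cor:fct}, that the base locus is one conjugate pair and that $X\cap\cE$ splits into the four asserted generators.
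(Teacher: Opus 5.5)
Your plan for \ref{thm:B:e}, the only assertion the paper proves by an actual argument rather than a citation, has a genuine gap. You propose to take ``the normal form'' of Perseus and CH1 cyclides and exhibit an explicit $S\cdot\tS\cdot\tT\cdot T$, but no normal form or factorization is supplied. Supplying one for \emph{every} Perseus and CH1 cyclide is not a routine step. These are multi-parameter families up to M\"obius equivalence, so you would also need a moduli argument that every member is reached. More importantly, it is exactly the $\Leftarrow$ direction of \Cref{prp:E4}, i.e.\ \citep[Conjecture~1]{2023trans}, which that reference could not settle. In the paper that statement is \emph{deduced from}~\ref{thm:B:e} together with \RL{ST}{c}, so your route presupposes the downstream result.

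The missing idea is the birational construction on the N\'eron--Severi lattice. Start from the data $B(X),G(X),E(X)$ of \citep[Theorem~A]{2023trans}, after applying the automorphism $\gamma$ in the CH1 case. On the minimal resolution $Y$, contract the $(-1)$-curves of classes $\varepsilon_3,\varepsilon_4$ and then $\varepsilon_1,\varepsilon_2$; this reaches a degree-8 del Pezzo surface $\cong\P^1\times\P^1$. Since $\varepsilon_1-\varepsilon_3,\varepsilon_2-\varepsilon_4\in B(X)$, the second pair of points is infinitely near the first. Hence the anticanonical map becomes a birational bidegree~$(2,2)$ map with base locus exactly $\{(p,q),(\op,\oq)\}$. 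The lines of classes $\ell_0-\varepsilon_1,\ell_0-\varepsilon_2,\ell_1-\varepsilon_1,\ell_1-\varepsilon_2$ are then the left and right images through the base points. Their incidences (\Cref{fig:lines}) together with \Cref{lem:S3} move these four lines into a hyperplane section equivalent to $\cE$; the skewness of each conjugate pair excludes $\cU$ and $\cH$. This construction never needs the order of the quaternionic factors, which is why \RL{ST}{c} is needed afterwards.

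Your own arguments for \ref{thm:B:a} and \ref{thm:B:b} also contain wrong steps; the paper simply cites \citep[Theorem~1(b) and Corollary~2]{2023trans} for these.

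In~\ref{thm:B:a} the contradiction rests on $L,\oL,R,\oR$ being \emph{double} lines. Doubling comes from \Cref{lem:nopoint} and \Cref{lem:LLRR}, which require a base-point-free biquadratic morphism onto a surface of degree~8. For $d=4$ nothing forces it. Indeed, by~\ref{thm:B:e} a Perseus cyclide has $X\cap\cE$ equal to four simple lines, so your count would forbid a configuration that actually occurs.

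A repair is possible:
\begin{itemize}
\item Show directly that the four lines lie in $X\cap\cU$.
\item No plane contains more than two lines of the cone $\cU$, so the four lines cannot lie in a tangent plane at the vertex. Hence the center of $\pi$ is singular on $X$, and $\deg\pi(X)\leq 2$.
\item For non-parallel $A,B$, a general plane parallel to $A$ contains two distinct translates $A+\{b\}$, which no plane conic can contain. So $A+B$ lies in no quadric, a contradiction.
\end{itemize}

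Your aside that parallel coaxial circles give a quadric of revolution with $d=2$ is also wrong. Parallel circles always give a plane, while a non-spherical quadric would give $d=4$.

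In~\ref{thm:B:b}, ``degree below~8 forces base points'' fails when the product map is not birational. For great circles $C,D$ through~$1$, the pairs $(c,d)$ and $(-c,-d)$ have the same image, and the Clifford torus arises with degree $8/2=4$ and no base points. Finally, two non-real isolated singularities is the Perseus configuration of \Cref{fig:lines}; a ring cyclide has four.
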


See Appendix~\ref{app:B} for the proof of \Cref{thm:B}.

The following proposition confirms \citep[Conjecture~1]{2023trans}.

\begin{proposition}
\label{prp:E4}
Suppose that $X\subset\S^3$ is a celestial surface of degree 2 or 4.
Then $\nu(X)=\bP(C\cdot D)$ for some circles $C,D\subset S^3$ and a
M\"obius transformation $\nu\in\aut\S^3$
if and only if
$X$ is either a Perseus cyclide, ring cyclide or CH1 cyclide.
\end{proposition}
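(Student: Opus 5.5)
The plan is to prove both directions directly from \Cref{thm:B}. Throughout, I use that the classes of Perseus, ring and CH1 cyclides are invariant under Möbius transformations. This holds because the definitions in \textsection\ref{sec:c} depend only on $\lambda$, on the degree of $\bS(Z)$, on smoothness of $\bR(X)$, and on Möbius equivalence to a one-sheeted circular hyperboloid. Each of these is preserved by $\nu\in\aut\S^3$, since $\nu$ is a real linear automorphism of $\P^4$ preserving $\S^3$ and sends circles to circles.

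For the $\Rightarrow$ direction, suppose $\nu(X)=\bP(C\cdot D)$. Then $\nu(X)$ is a celestial surface, and its degree is that of $X$, so it lies in $\{2,4\}$. \Cref{thm:B}\ref{thm:B:b}, applied to $\nu(X)$, gives that the degree equals $4$ and that $\nu(X)$ is a Perseus, ring or CH1 cyclide. By the invariance remark, $X=\nu^{-1}(\nu(X))$ is of the same type.

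For the $\Leftarrow$ direction I distinguish the three types.

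\emph{Ring cyclide.} If $X$ is a ring cyclide, the conclusion is exactly \Cref{thm:B}\ref{thm:B:c}.

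\emph{Perseus or CH1 cyclide.} Here \Cref{thm:B}\ref{thm:B:e} provides $\nu$ and an $\cE$-decomposable biquadratic birational map $\varphi\c\P^1\times\P^1\dto\nu(X)$. \Cref{lem:Q} then shows that the associated quaternionic polynomial of $\varphi$ or of $\varphi\circ\theta$ equals either $S\cdot\tS\cdot\tT\cdot T$ or $S\cdot\tT\cdot\tS\cdot T$. Replacing $\varphi$ by $\varphi\circ\theta$ changes nothing relevant, because $\theta$ merely interchanges left and right.
\begin{itemize}
\item In the first case, \Cref{lem:SSTT} gives circles $C,D$ with $\nu(X)=\bP(C\cdot D)$, and we are done.
\item In the second case, $Q=S\cdot\tT\cdot\tS\cdot T$, and I aim to convert $Q$ into the first form using \RL{ST}{c}.
\end{itemize}

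The main obstacle is verifying the hypotheses of \RL{ST}{c}, and I would use the base locus $\Lambda=\{(p,q),(\op,\oq)\}$ from \Cref{thm:B}\ref{thm:B:e} to do so. Let $a$ and $b$ be the non-real roots of $S\cdot S^*$ and $T\cdot T^*$; by \Cref{lem:Q} these correspond to the left/right images $L_p,R_q$ lying in $\cE$. \RL{ST}{b} provides a point $(a,c)$ with $Q(a,c)=0$, and this point must lie in $\Lambda$. I expect to match $a$ with $p$ (up to conjugation) and $c$ with $q$, so that $c=b$ and hence $S(a)\cdot\tT(b)\cdot\tS(a)\cdot T(b)=0$.

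For the non-degeneracy hypotheses, note that $[Q(s,b)]$ and $[Q(a,t)]$ are, up to central projection $\tau$, the images of the fibers through the base point. If either of these were a single complex point, then $Q$ would vanish along an entire fiber, or \Cref{lem:basic}\ref{lem:basic:c} would produce a linear factor. Both outcomes contradict either $\Lambda$ being finite or the description $X\cap\cE=L_p\cup L_\op\cup R_q\cup R_\oq$ as a union of genuine lines. Once these hypotheses hold, \RL{ST}{c} rewrites $Q$ as $S\cdot S'\cdot T'\cdot T$ with norms $S'\cdot S'^*=\tS\cdot\tS^*$ and $T'\cdot T'^*=\tT\cdot\tT^*$, and \Cref{lem:SSTT} applies again to conclude.

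If matching the root $c$ to $b$ proves delicate, I would fall back on the following argument. The two base points are exactly the points where the complex lines $L_p$ and $R_q$ are contracted, and these points are $(a,b)$ and $(\oa,\ob)$ by construction of the $\cE$-decomposition. This forces $Q(a,b)=0$, possibly after conjugating.
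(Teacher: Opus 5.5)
Your proposal is correct and follows the paper's proof. For $\Rightarrow$ you use \Cref{thm:B}\ref{thm:B:b}. For $\Leftarrow$ you use \Cref{thm:B}\ref{thm:B:c} for ring cyclides, and \Cref{thm:B}\ref{thm:B:e}, \Cref{lem:Q}, \RL{ST}{c} and \Cref{lem:SSTT} for Perseus and CH1 cyclides.

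Your fallback is exactly what the paper does: $Q(a,b)=0$ holds directly because $((1:a),(1:b))=(p,q)$ is a base point, so the detour through \RL{ST}{b} is unnecessary. The non-degeneracy hypotheses of \RL{ST}{c} also hold directly: $[Q(s,b)]$ and $[Q(a,t)]$ are the central projections of the lines $R_q,L_p\subset\cE$, and $\tau$ is injective on $\cE$. So your appeal to $Q$ vanishing along a fiber, or to \RL{basic}{c} (which only concerns linear polynomials), is not needed.
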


\begin{proof}
The $\Rightarrow$ direction follows from \Cref{thm:B}\ref{thm:B:b}.
We proceed with the $\Leftarrow$ direction.
If $X$ is a ring cyclide, then the proof is concluded by \Cref{thm:B}\ref{thm:B:c}.
Suppose that $X$ is a Perseus cyclide or CH1 cyclide.
It follows from \Cref{thm:B}\ref{thm:B:e} that there exists a
M\"obius transformation~$\nu\in\aut\S^3$,
an~$\cE$-decomposable biquadratic birational map~$\varphi\c\P^1\times\P^1\dto\nu(X)$
with components $(\varphi_0,\ldots,\varphi_4)$,
and complex points $p:=(1:a)$ and $q:=(1:b)$ in $\P^1$
\st
$
\varphi_0(p,q)=\cdots=\varphi_4(p,q)=0
$.
Moreover,
the left image associated to $p$ and the right image
associated to $q$ are complex lines in $\cE$ and thus not complex points.

By \Cref{lem:Q}, we may assume \Wlog that
there exist linear polynomials $S,\tS\in\H[s]$ and $T,\tT\in\H[t]$
such that the associated quaternionic polynomial~$Q$ of~$\varphi$
is equal to either
$S\cdot \tT\cdot \tS\cdot T$ or $S\cdot\tS\cdot \tT\cdot T$.

Now suppose that $Q=S\cdot \tT\cdot \tS\cdot T$.
We find that $S(a)\cdot\tT(b)\cdot\tS(a)\cdot T(b)=0$.
Moreover, the complex lines~$[S\cdot\tT(b)\cdot\tS\cdot T(b)]$
and $[S(a)\cdot\tT\cdot\tS(a)\cdot T]$ in the null quadric~$\cN$
correspond to the central projections
of the left image associated to~$p$ and the right image associated to~$q$ in $\cE$.
In particular, these central projections are not complex points.
Therefore, it follows from \RL{ST}{c} that
\Wlog $Q=S\cdot\tS\cdot \tT\cdot T$.

We established that \Wlog $Q=S\cdot\tS\cdot \tT\cdot T$
and thus the proof is concluded by \Cref{lem:SSTT}.
\end{proof}

\begin{proof}[Proof of \Cref{thm:c}]
Let $X:=\bS(Z)$ so that
$X\subset\S^3$ is a celestial surface of degree~$d$.

\ref{thm:c:a}
This assertion follows from \Cref{thm:A}\ref{thm:A:a} and thus
in particular $d\in\{2,4,8\}$.

\ref{thm:c:b}
By Assertion~\ref{thm:c:a}, $d\in\{2,8\}$.
If $d=2$, then $Z$ is M\"obius equivalent to the plane~$A+B$ for some coplanar circles $A,B\subset\R^3$
(recall that $A+B$ is Zariski closed by definition).
If $d=8$, then by \Cref{thm:A}\ref{thm:A:b} and \Cref{prp:EU}
there exists a M\"obius transformation $\nu\in\aut\S^3$
and a biquadratic birational morphism $\varphi\c \P^1\times\P^1\to \nu(X)$
that is either $\cE$-decomposable or $\cU$-decomposable.
Assertion~\ref{thm:c:b} now follows from Propositions~\ref{prp:ECD} and~\ref{prp:UAB}.

\ref{thm:c:c}
By Assertion~\ref{thm:c:a}, $d\in\{2,4\}$ and thus
Assertion~\ref{thm:c:c} now follows from \Cref{prp:E4}.

\ref{thm:c:d}
We know from Assertion~\ref{thm:c:a} and \Cref{thm:B}\ref{thm:B:a} that $d\in\{2,8\}$.
If $d=2$, then Assertion~\ref{thm:c:d} follows from \Cref{thm:B}\ref{thm:B:b}.
If $d=8$, then Assertion~\ref{thm:c:d} follows from
\Cref{thm:A}\ref{thm:A:b} and \Cref{prp:ABCD} in Appendix~\ref{app:ABCD}.
\end{proof}

\begin{lemma}
\label{lem:cubic}
The surface $Z\subset \R^3$ is a smooth cubic
that is either $2$-circled or $6$-circled
if and only if
$Z$ is M\"obius equivalent to the surface
\[
Z_\alpha:=\set{(x,y,z)\in\R^3}{f_\alpha:=z(x^2+y^2+z^2)+\alpha_1\,x^2+\alpha_2\,y^2+\alpha_3\,z=0},
\]
for some $\alpha\in \R^3$ \st
$\Psi_\alpha:=\alpha_1\alpha_2\alpha_3(\alpha_1-\alpha_2)(\alpha_1^2+\alpha_3)(\alpha_2^2+\alpha_3)\neq 0$.
\end{lemma}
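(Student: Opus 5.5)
The plan is to pass to the projective model and reduce both directions to a statement about the quartic surface $X:=\bS(Z)\subset\S^3$. The stereographic projection $\pi$ maps $X$ birationally onto the projective closure $\bP(Z)$. Since $\bS$ of a cubic surface has degree at most $4$, I want to establish that $X$ is a Darboux cyclide that passes through the centre $c:=(1:0:0:0:1)$ of $\pi$. I will also show that $\bP(Z)$ is a smooth cubic if and only if $X$ is a smooth (complex) intersection of $\S^3$ with a second quadric, that is, a smooth quartic del Pezzo surface, with $c$ a smooth real point of $X$. One direction is the classical fact that projecting a smooth quartic del Pezzo surface from a point on it gives a smooth cubic surface. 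The other direction follows because singular points of $X$, or a singular point at $c$, produce singular points of $\bP(Z)$: either in $\R^3$, or on the absolute conic $\pi(\cU)$ at infinity.

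For the direction ``$\Leftarrow$'', I will verify by direct computation that $Z_\alpha$ is a cubic and that $\bP(Z_\alpha)$ is smooth exactly when $\Psi_\alpha\neq0$. This means checking that the gradient of the homogenization of $f_\alpha$ vanishes nowhere on $\bP(Z_\alpha)$. I expect a case split on $z_0=0$ versus $z_0\neq0$ to produce exactly the factors $\alpha_1$, $\alpha_2$, $\alpha_3$, $\alpha_1-\alpha_2$, $\alpha_1^2+\alpha_3$ and $\alpha_2^2+\alpha_3$. Next, \Cref{thm:B}\ref{thm:B:d} gives $\lambda\in\{2,4,5,6\}$. To exclude $\lambda\in\{4,5\}$, I will use that ring cyclides and Perseus cyclides are singular as complex surfaces in $\S^3$: they contain complex double points, which is also visible from \Cref{thm:B}\ref{thm:B:e}, where the base points of $\varphi$ come from singularities. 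Hence $\bP(Z)$ would not be smooth in those cases. Since the real points of a smooth cubic are infinite, $\lambda\geq2$ is automatic once the circles through a general point are exhibited. I will obtain these from the conic fibrations of the del Pezzo surface that are defined over $\R$.

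For the direction ``$\Rightarrow$'', I start from the smooth quartic del Pezzo surface $X=\S^3\cap V(q)$ and diagonalize the pencil $\langle \S^3,q\rangle$. Smoothness of $X$ means that the pencil has five distinct singular members, so it has Segre symbol $[11111]$. I then use a real Möbius transformation to place the real point $c\in X$ at $(1:0:0:0:1)$, while at the same time bringing $q$ into a form whose stereographic image is $f_\alpha$. Concretely, I will normalize $q$ modulo the Möbius quadric and modulo the stabilizer of $c$ in $\aut\S^3$, which is the similarity group of $\R^3$. Under this stabilizer the image $\bP(Z)$ is a cubic of the form $z_0^{-1}\cdot(\text{linear})\cdot(x^2+y^2+z^2)+(\text{quadric})$. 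Translations, rotations and scaling then remove the remaining terms, leaving the three parameters $\alpha_1,\alpha_2,\alpha_3$.

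I expect the main obstacle to be this normalization together with the reality bookkeeping. The diagonal form of the pencil is only guaranteed over $\C$, and the eigenvalues may be non-real. The claim that the cubic can always be brought to the shape $z\,(x^2+y^2+z^2)+\alpha_1x^2+\alpha_2y^2+\alpha_3z$, with no mixed or linear terms, requires a careful use of the translation freedom and of the principal axes of the quadratic part. My first attempt will be to rotate so that the linear factor multiplying $x^2+y^2+z^2$ becomes $z$. I will then translate in $x$, $y$ and $z$ to kill the linear terms and the $xz$, $yz$ terms, rotate in the $xy$-plane to diagonalize, and finally scale. If this fails, I will fall back on the real classification of pencils of quadrics by real Segre symbols to control the non-real eigenvalue cases.
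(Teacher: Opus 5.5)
Your smoothness check for $Z_\alpha$ is correct and matches the paper. However, the direction ``$\Rightarrow$'' has a genuine gap: the normalization by similarities cannot work, and I believe no choice of projection centre can, because that direction appears to be false as stated.

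\textbf{The similarity normalization fails by a dimension count.} A cubic whose closure contains the absolute conic has the form $L\,(x^2+y^2+z^2)+Q+M+k$, with $L,M$ linear forms, $Q$ a quadratic form and $k$ a constant. This is a $12$-dimensional projective family, while the similarities form a $7$-dimensional group. Follow your recipe: rotate $L$ to $z$, translate to remove $xz$, $yz$ and $z^2$, then rotate about the $z$-axis. You arrive at $z(x^2+y^2+z^2)+\alpha_1x^2+\alpha_2y^2+m_1x+m_2y+m_3z+k$. Nothing is left to remove $m_1,m_2,k$: the translations are used up, and a homothety only rescales. Worse, the homothetic image $\rho Z_\alpha$ equals $Z_{(\rho\alpha_1,\rho\alpha_2,\rho^2\alpha_3)}$. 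So $\{Z_\alpha\}$ is only $2$-dimensional up to similarity, whereas smooth Darboux cyclides have $3$ M\"obius moduli and the $2$- or $6$-circled ones form an open subset. At minimum you would have to move the projection centre to a special point of $X$. Your plan has no mechanism for that, and the fallback to real Segre symbols does not supply one.

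\textbf{No choice of centre helps.} Homogenizing gives $\bS(Z_\alpha)=\S^3\cap\{q_\alpha=0\}$ with $q_\alpha=\alpha_1x_1^2+\alpha_2x_2^2+(1+\alpha_3)x_0x_3+(1-\alpha_3)x_3x_4$. One computes $\det(q_\alpha-\mu\,Q_{\S^3})=(\alpha_1-\mu)(\alpha_2-\mu)\,\mu\,(\mu^2+\alpha_3)$, whose five roots are distinct exactly when $\Psi_\alpha\neq0$. A M\"obius transformation acts on the pencil parameter $\mu$ (with $\S^3$ at $\mu=\infty$) by an affine map. Hence ``three of the five singular values form an arithmetic progression'' is a M\"obius invariant, and every $Z_\alpha$ has it, via $-\sqrt{-\alpha_3},0,\sqrt{-\alpha_3}$.

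Now take $X=\S^3\cap\{-x_1^2+2x_2^2+3x_3^2+8x_4^2=0\}$. Its singular values $\{0,-1,2,3,8\}$ contain no such progression, so $X$ is smooth. Its real part is two spheres. Only the cone at $\mu=3$ has a base of signature $(2,2)$, so $X$ is $2$-circled. Projecting from a general real point gives a smooth $2$-circled cubic that is not M\"obius equivalent to any $Z_\alpha$.

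The paper's own argument for this direction breaks at the corresponding spot. $H^\alpha_4$ is a pair of parallel lines, so $\gamma^\alpha_4\mapsto\rho^2\gamma^\alpha_4$ under homotheties. Also, $\gamma^\alpha_1,\gamma^\alpha_2,\gamma^\alpha_3$ are angles at the chosen centre, so they are invariants of the pair (surface, centre), not of the surface alone.

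\textbf{Remaining issues in ``$\Leftarrow$''.} Your outline is sound, but before invoking \Cref{thm:B}\ref{thm:B:d} you still need to show that some real singular member of the pencil has base signature $(2,2)$ for every admissible $\alpha$. That is the content of the paper's sign analysis of $\gamma^\alpha$ in Claim~1. Separately, $\bS$ of a general cubic has degree $6$, not at most $4$; only cubics containing the absolute conic give degree $4$.
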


\begin{proof}
For all $t\in \C^n$, we define $\sgn(t):=(\sgn(t_i))_{1\leq i\leq n}$,
where the \df{sign function} $\sgn\c \C\to \{0,1,-1\}$ is defined as
\[
\sgn(t):=
\begin{cases}
 1 & \text{if }t\in\R_{>0}, \\
-1 & \text{if }t\in\R_{<0}, \\
 0 & \text{otherwise}.
\end{cases}
\]
Let $c:=\sqrt{-\alpha_3}$
and
\[
\gamma^\alpha:=\left(
\frac{\alpha_2}{\alpha_1},~
\frac{\alpha_2+c}{\alpha_1+c},~
\frac{\alpha_2-c}{\alpha_1-c},~
\frac{\alpha_1(\alpha_1^2-c^2)}{\alpha_1-\alpha_2},~
\frac{\alpha_2(\alpha_2^2-c^2)}{\alpha_2-\alpha_1}\right).
\]
Notice that $\gamma^\alpha\in(\C\setminus\{0\})^5$, since $\Psi_\alpha\neq 0$ by assumption.
Let
\[
\lambda_\alpha:=2\,|\set{i\in I}{\sgn(\gamma_i^\alpha)=-1}|.
\]
{\bf Claim~1.} {\it Up to M\"obius equivalence of $Z_\alpha$, we have $\lambda_\alpha\in\{2,6\}$.}

Up to the M\"obius transformations $(x,y,z)\mapsto (y,x,z)$ and $(x,y,z)\mapsto (x,y,-z)$,
we may assume that $\alpha_1>\alpha_2$ and
\[
\sgn(\alpha)\in\{(1,-1,-1),(1,1,-1),(1,1,1),(1,-1,1)\}.
\]
We make a case distinction:
\begin{Mlist}
\item
If $\sgn(\alpha)=(1,-1,-1)$, then
$c\in\R_{>0}$,
$\sgn(\gamma^\alpha_1)=-1$,
$\sgn(\gamma^\alpha_2)= \sgn(\alpha_2+c)$,
$\sgn(\gamma^\alpha_3)=-\sgn(\alpha_1-c)$,
$\sgn(\gamma^\alpha_4)= \sgn(\alpha_1-c)$,
$\sgn(\gamma^\alpha_5)=-\sgn(\alpha_2+c)$,
and thus $\lambda_\alpha=6$.

\item
If $\sgn(\alpha)=(1,1,-1)$, then
$c\in\R_{>0}$,
$\sgn(\gamma^\alpha_1)=
 \sgn(\gamma^\alpha_2)=1$,
$\sgn(\gamma^\alpha_3)= \sgn(\alpha_2-c)\cdot\sgn(\alpha_1-c)$,
$\sgn(\gamma^\alpha_4)= \sgn(\alpha_1-c)$ and
$\sgn(\gamma^\alpha_5)=-\sgn(\alpha_2-c)$.
If $\sgn(\alpha_1-c)=-1$, then $\sgn(\alpha_2-c)=-1$
and thus $\lambda_\alpha=2$.

\item
If $\sgn(\alpha)=(1,1,1)$,
then
$c=\ii\,\sqrt{\alpha_3}$,
$\sgn(\gamma^\alpha_1)=1$,
$\sgn(\gamma^\alpha_2)=\sgn(\gamma^\alpha_3)=\sgn(c)=0$,
$\sgn(\gamma^\alpha_4)=\sgn(\alpha_1^2+\alpha_3)=1$,
$\sgn(\gamma^\alpha_5)=-\sgn(\alpha_2^2+\alpha_3)=-1$
and thus $\lambda_\alpha=2$.

\item
If $\sgn(\alpha)=(1,-1,1)$,
then
$c=\ii\,\sqrt{\alpha_3}$,
$\sgn(\gamma^\alpha_1)=-1$,
$\sgn(\gamma^\alpha_2)=\sgn(\gamma^\alpha_3)=\sgn(c)=0$,
$\sgn(\gamma^\alpha_4)=\sgn(\gamma^\alpha_5)=
\sgn(\alpha_1^2+\alpha_3)=\sgn(\alpha_2^2+\alpha_3)=1$
and thus $\lambda_\alpha=2$.

\end{Mlist}
This concludes the proof of Claim~1.

Let $\bC(U)\subset\C^n$ for $U\subset\R^n$ denote the Zariski closure of the
embedding of~$U$ into~$\C^n$ via the standard embedding $\R^n\hookrightarrow\C^n$.
We call $C\subset \C^n$ a \df{circle} if $C=\bC(D)$ and $D\subset\R^n$ is a circle.
Let $\mu_\C\c\bC(S^3)\dto \C^3$ denote the stereographic projection
$\mu\c S^3\dto \R^3$ extended to the complex numbers
and let $p_\infty:=(0,0,0,1)$ denote the center of stereographic projection.
Let $Y_\alpha\subset S^3$ denote the Zariski closure of $\mu^{-1}(Z_\alpha)$.
It follows from \citep[Proposition~5 and Theorem~1]{2021circ} that $\deg Y_\alpha=4$
and thus $Z_\alpha$ is a Darboux cyclide.
We remark that a Gr\"obner basis elimination shows that
\[
\bC(Y_\alpha)=\set{x\in\bC(S^3)}{\alpha_1 x_1^2 + \alpha_2 x_2^2  + \alpha_3x_3(1-x_4) + x_3(1 + x_4)=0}.
\]
Since $\Psi_\alpha\neq 0$ and
\[
f_\alpha=z(z-c)(z+c)+(z+\alpha_1)x^2+(z+\alpha_2)y^2,
\]
we find that for all $i\in I:=\{1,\ldots,5\}$,
\[
H^\alpha_i:=\set{(x,y,z)\in\C^3}{g_i=h_i=0}
\]
is a complex plane section of~$\bC(Z_\alpha)$, where
\begin{align*}
g&:=(z,~z-c,~z+c,~z+\alpha_1,~z+\alpha_2),
\\
h&:=\left(x^2+\gamma^\alpha_1\,y^2,~x^2+\gamma^\alpha_2\,y^2,~x^2+\gamma^\alpha_3\,y^2,~y^2+\gamma^\alpha_4,~y^2+\gamma^\alpha_5\right).
\end{align*}
We observe that $\Psi_\alpha\neq 0$
if and only if
the union~$\cup_{i\in I} H^\alpha_i$ of the 5 complex plane sections consists of 10 distinct complex lines.
Indeed, $\Psi_\alpha\neq 0$ implies for all $i\in I$ that $\gamma^\alpha_i\in\C\setminus\{0\}$
and thus the polynomial~$h_i$ has two distinct linear factors.
Let $\ell_\alpha(q)$ and $\ell_\alpha^\R(q)$ denote the number of complex and real conics
in~$\bC(Y_\alpha)$ containing $q\in \bC(Y_\alpha)$, \resp.

{\bf Claim 2.} {\it
If $q\in\bC(Y_\alpha)$ is a general point, then
$\ell_\alpha(q)=\ell_\alpha(p_\infty)$,
$\ell_\alpha^\R(q)=\ell_\alpha^\R(p_\infty)$
and $Z_\alpha$ is $\ell_\alpha^\R(p_\infty)$-circled.
}

Let $C\subset \bC(Y_\alpha)$ be a complex conic containing $p_\infty$.
There exists a one-parameter family of complex hyperplane sections of $\bC(S^3)$ containing $C$.
Let $C'\subset\bC(Y_\alpha)$ \st
$C\cup C'$ is the hyperplane section of $\bC(Y_\alpha)$ containing $C\cup\{q\}$.
By B\'ezout's Theorem, $C'$ is a complex conic containing $q$.
We observe that $C$ is real if and only if $C'$ is real.
Since $q$ is general and $\bC(Y_\alpha)$ is not covered by complex lines,
$C'$ must be irreducible. Moreover, since the real point set $Y_\alpha$ is a surface,
we deduce that if $C'$ is real, then $C'_\R\neq \varnothing$ and thus $C'$ must be a circle.
Therefore, $Z_\alpha$ is $\ell_\alpha^\R(q)$-circled by definition,
concluding the proof of Claim~2.

{\bf Claim 3.} {\it
If $L\subset H^\alpha_i$ is a (complex) line for some $i\in I$,
then there exists a unique (complex) irreducible conic $C\subset\bC(Y_\alpha)$
\st $\mu_\C(C)=L$.}

First notice that for all $i\in I$
there exists $\alpha\in\R^3$ \st $\sgn(\gamma^\alpha_i)=-1$.
In this case, the lines in $H^\alpha_i$ are real
and thus must be projections of two cospherical circles in~$\bC(Y_\alpha)$
containing the center~$p_\infty$.
But this concludes the proof of Claim~3 as the same must hold over the complex numbers
(and thus for all $\alpha\in\R^3$).

{\bf Claim 4.} {\it
We have $\Sing \bC(Y_\alpha)=\varnothing$ and
if $C\subset\bC(Y_\alpha)$ is a (complex) irreducible conic \st $p_\infty\in C$,
then there exists a unique $i\in I$ \st $\mu_\C(C)\subset H^\alpha_i$
is a (complex) line.}

Suppose that $q\in\bC(Y_\alpha)$ is a general point
so that $\ell_\alpha(q)$ is the number of complex irreducible conics containing $q$.
It follows from \citep[Corollary~6]{2021circ} that $\ell_\alpha(q)\leq 10$.
Moreover, if $\ell_\alpha(q)=10$, then $\Sing\bC(Y_\alpha)=\varnothing$
(in \citep[Table~6]{2021circ} we have $\ell_\alpha(q)=|G(X)|$ and $|\Sing\bC(Y_\alpha)|\leq |B(X)|$
as a straightforward consequence of the definitions; see \citep[\textsection 4]{2023trans} for more details
or \citep[\textsection2.4]{2012web} for an alternative approach).
We know from Claim~2 that $\ell_\alpha(q)=\ell_\alpha(p_\infty)$.
Since $\Psi_\alpha\neq 0$ the union~$\cup_{i\in I} H^\alpha_i$ consists of 10 distinct complex lines.
By Claim~3 each of the 10 complex irreducible conics in~$\bC(Y_\alpha)$ containing~$p_\infty$ are accounted for
and thus Claim~4 holds true.

We are now ready to prove the $\Leftarrow$ direction.
By Claims~3 and~4, the real lines in~$\cup_{i\in I} H^\alpha_i$
are in bijective correspondence with the $k$ circles in $Y_\alpha$ containing $p_\infty$ for some $k\in\Z_{\geq 0}$.
A pair of complex lines in $H^\alpha_i$ is real if and only if $\sgn(\gamma^\alpha_i)=-1$.
Thus, we established that $k=\lambda_\alpha=\ell_\alpha^\R(p_\infty)$.
Hence, $Z_\alpha$ is $\lambda_\alpha$-circled by Claim~2
and we know from Claim~1 that $\lambda_\alpha\in\{2,6\}$.
Since $\Sing\bC(Y_\alpha)=\varnothing$ by Claim~4, we deduce that $\Sing Z_\alpha=\varnothing$.
This concludes the proof for the $\Leftarrow$ direction.

Let us proceed with proving the $\Rightarrow$ direction.
We know from \citep[Theorem~6.6 with Propositions~3.2 and~4.2]{2000}
that there exists $\beta\in\R^3$ \st the $\lambda$-circled cubic surface~$Z$ is M\"obius equivalent to the surface
\[
X_\beta:=\set{(x,y,z)\in\R^3}{(x^2+y^2+z^2)^2+\beta_1\,x^2+\beta_2\,y^2+\beta_3\,z^2+\lambda-4=0}.
\]
In particular, this shows that
for all $\alpha\in\R^3$
there exists $\beta\in\R^3$ \st $Z_\alpha$ is M\"obius equivalent
to $X_\beta$
(notice that in \cite{2000} we have \Wlog $a^2=2$;
see \citep[Remark~32]{2023trans} for an alternative proof strategy).
It remains to show that for all $\beta\in\R^3$
the surface $X_\beta$ is M\"obius equivalent to $Z_\alpha$ for some $\alpha\in\R^3$.
If $\sgn(\gamma^\alpha_i)=-1$ for some $i\in I$,
then $H_i\subset Z_\alpha$ consists of two real lines
that intersect along an angle that continuously depends on $\gamma^\alpha_i$.
Since the M\"obius transformations are angle preserving,
it follows that $\gamma^\alpha_i$ is a M\"obius invariant.
This extends to the complex numbers and thus $\gamma^\alpha\in\C^5$ is a M\"obius invariant for all $\alpha\in\R^3$.
It is straightforward to see that $\set{(\gamma^\alpha_1,\gamma^\alpha_2,\gamma^\alpha_4)}{\alpha\in \R^3}$
is Zariski dense in~$\C^3$.
This implies that the set of M\"obius equivalence classes~$\set{[Z_\alpha]}{\alpha\in \R^3}$
is 3-dimensional.
It follows that for all $\beta\in\R^3$ there exists $\nu\in \aut\S^3$ and $\alpha\in\R^3$ \st
\[
\nu(\bS(X_\beta))=\bS(Z_\alpha).
\]
This concludes the proof as $Z$ is M\"obius equivalent to $Z_\alpha$ for some $\alpha\in\R^3$.
\end{proof}

\begin{proof}[Proof of \Cref{cor:c}]
Let $X:=\bS(Z)$ so that $X$ is a $\lambda$-circled celestial surface of degree~$d$.
Let $p_\infty:=(0,0,0,1)$ denote the center of stereographic projection $\mu\c S^3\dto \R^3$.
Notice that $Z$ is the stereographic projection of~$\bR(X)$
and the degree of $Z$ is equal to~$d-m$, where $m\geq 0$ is the multiplicity of $p_\infty$ in $\bR(X)$.
We know from \Cref{thm:c}\ref{thm:c:a} that $d\in\{2,4,8\}$.
We make a case distinction on $d$.

Suppose that $d=2$.
In this case, $Z$ is of degree at most two and M\"obius equivalent to an $\infty$-circled plane.
By Theorems~\ref{thm:c}\ref{thm:c:b} and~\ref{thm:c}\ref{thm:c:c}, $Z$ belongs to~\ref{c:ii} only.

Suppose that $d=8$.
In this case, $\deg Z\geq 4$ by \citep[Proposition~5]{2021circ}
and thus $Z$ does not belong to~\ref{c:iii} or~\ref{c:iv}.
By \citep[Corollary~2]{2023trans} the sum of two circles is either
$\infty$-circled or 2-circled.
Hence, $Z$ belongs to exactly one of~\ref{c:i} or~\ref{c:ii}
by Theorems~\ref{thm:c}\ref{thm:c:b} and~\ref{thm:c}\ref{thm:c:d}.

Suppose that $d=4$ and $\bR(X)$ is smooth.
We may assume up to M\"obius equivalence that $p_\infty\in \bR(X)$ and thus
$Z$ is a smooth cubic surface and a celestial Darboux cyclide.
If $\lambda\in\{2,6\}$, then
$Z$ belongs to~\ref{c:iii} by \Cref{lem:cubic}.
It follows from Theorems~\ref{thm:B}\ref{thm:B:a} and~\ref{thm:B}\ref{thm:B:d}  that
$\lambda\in \{2,4,5,6\}$ and $Z$ does not belong to~\ref{c:ii}.
If $\lambda\in\{4,5\}$, then $Z$ belongs only to~\ref{c:i} by \Cref{thm:c}\ref{thm:c:c}.
Thus, if $\lambda\in\{2,6\}$, then $Z$ belongs only to~\ref{c:iii}.

Suppose that $d=4$ and $\bR(X)$ is singular.
We may assume up to M\"obius equivalence that $p_\infty\in \Sing \bR(X)$ so that $Z$ is a quadratic surface.
The M\"obius transformations preserving the point $\bP(p_\infty)\in\S^3$
correspond via the stereographic projection to Euclidean similarities of $\R^3$.
The classification of quadratic surfaces in $\R^3$ up to Euclidean similarities is classically known.
Indeed, it follows from \citep[Proposition~4]{2021circ} that
the projective model $\bP(Z)\subset\P^3$ is up to Euclidean similarities defined
by one of the 24 rows in \citep[Table~11]{2021circ}.
We refer to each of these rows by the name specified in the last column.
See \citep[Definition~3]{2021circ} for an explanation of these names
and note in particular that ``CH1'' is short for ``one-sheeted circular hyperboloid''.
The first column of these rows shows
that there exists $a\in\R_{>0}$, $b\in\R_{>0}\cup\{-1\}$ and $c,d,e\in\{-1,0,1\}$ \st
\[
\bP(Z)=\set{x\in\P^3}{a\,x_1^2+b\,x_2^2+c\,x_3^2+d\,x_0\,x_3+e\,x_0^2=0}.
\]
The third and fourth columns specify the values $C,L\in\Z_{\geq 0}$
\st $Z$ contains $C$ circles and $L$ lines through almost each point.
Since $Z$ is $(C+L)$-circled, we have $C+L\geq 2$ by assumption.
Moreover, $X$ is singular and thus $\bP(Z)$ does not belong to the row with name~$\S^2$.
It follows that $\bP(Z)$ is defined by exactly one of the following 10 rows:
EE, EH2, EH1, CH1, EO, CO, EP, EY, CY or HP.
We make a case distinction on the three inequalities at~\ref{c:iv}.
\begin{Mlist}
\item Firstly, suppose that $(a-b)(b+1)c(d^2-1)e(c+e-2)\neq 0$.
The factor $(d^2-1)$ implies that $d=0$.
The factor $c\,e\,(c+e-2)$ implies that $c=\pm 1$, $e=\pm 1$ and $(c,e)\neq (1,1)$.
The factor $(a-b)(b+1)$ implies that $a,b>0$ and $a\neq b$.
We find that $\bP(Z)$ is defined by row either EE, EH2 or EH1.

\item Secondly, suppose that $(b+1)(c-1)(d^2-1)(e-1)(c^2e^2-1)\neq 0$.
The factor $(d^2-1)$ implies that $d=0$.
The factor $(c-1)(e-1)(c^2e^2-1)$ implies that $c,e\in\{0,-1\}$ and $(c,e)\neq(-1,-1)$.
The factor $(b+1)$ implies that $a,b>0$.
We find that $\bP(Z)$ is defined by row either EO, CO, EY or CY.

\item Lastly, suppose that
$(a-b)(c^2-1)(d^2+d)(e^2-1)\neq 0$.
The factor $(d^2+d)$ implies that $d=1$.
The factor $(c^2-1)(e^2-1)$ implies that $c=e=0$.
The factor $(a-b)$ implies that $a>0$, $a\neq b$ and either $b>0$ or $b=-1$.
We find that $\bP(Z)$ is defined by row either EP or HP.
\end{Mlist}
We established that $Z$ belongs to~\ref{c:iv}
if and only if $\bP(Z)$ is not CH1.
By Theorems~\ref{thm:c}\ref{thm:c:c} and~\ref{thm:c}\ref{thm:c:d},
$Z$ belongs to~\ref{c:i} if and only if $\bP(Z)$ is CH1.

We considered all possible disjoint cases and thus concluded the proof.
\end{proof}

\appendix
\addappheadtotoc

\section{The proof of Theorem B}
\label{app:B}

\Cref{thm:B} follows from the classifications in~\cite{2023trans}.
However, for \Cref{thm:B}\ref{thm:B:e}, we need to translate some results from
\cite{2023trans} to our current setting.

\begin{proof}[Proof of \Cref{thm:B}]
Assertions~\ref{thm:B:a}, \ref{thm:B:b} and \ref{thm:B:c}
follow from \citep[Theorem~1(b) and Corollary~2]{2023trans}.
For Assertion~\ref{thm:B:a},
notice that the circles $A$ and $B$ must be coplanar.
Assertion~\ref{thm:B:d} follows from \citep[Theorem~A and Table~5]{2023trans}.

\ref{thm:B:e}
We first recall some terminology
from \cite{2023trans}.

By \citep[Theorem~2.16]{2007}, there exists up to biregular isomorphisms
a unique birational morphism~$f\c Y\to X$
such that $Y$ is a smooth surface,
$f$ does not contract $(-1)$-curves and $f\circ \sigma_Y=\sigma_X\circ f$.

The \df{N\'eron-Severi lattice} $N(X)$ is
an additive group  defined by the divisor classes on~$Y$ up to numerical equivalence.
This group comes with a bilinear symmetric intersection product $\_\cdot\_\c N(X)\times N(X)\to\Z$
and an involution
$
\sigma_{X*}\c N(X)\to N(X)
$
induced by $\sigma_Y$.
We denote by $\aut N(X)$ the group automorphisms that are compatible
with both $\cdot$ and~$\sigma_{X*}$.

We remark that on rational surfaces numerical and linear equivalence coincide
(see the proof of~\citep[Lemma~4.9]{2018sko})
and that $N(X)$ does not depend on the choice of $f$. In particular, we have $N(X)=N(Y)$.

The \df{class}~$[C]$ of a complex curve $C\subset X$ is defined as the divisor class of $C_Y$ in $N(X)$,
where $C_Y\subset Y$ is the union of complex curves
in~$f^{-1}(C)$ that are not contracted to complex points
by the morphism~$f$. Beware that the square bracket notation is also
used for a different concept (see \Cref{sec:clifford}).

Since $X\subset\S^3$ is by assumption either a Perseus cyclide or CH1 cyclide,
it follows from \citep[Theorem~A]{2023trans} (and thus \citep[Theorem~4]{2021circ}) that
\[
N(X)\cong\bas{\l_0,\l_1,\p_1,\p_2,\p_3,\p_4}_\Z,
\]
$\sigma_{X*}(\l_0)=\l_0$,
$\sigma_{X*}(\l_1)=\l_1$,
$\sigma_{X*}(\p_1)=\p_2$,
$\sigma_{X*}(\p_3)=\p_4$,
and the non-zero intersections between the generators are $\l_0\cdot\l_1=1$ and $\p_1^2=\p_2^2=\p_3^2=\p_4^2=-1$.
The class of a hyperplane section of $X$ is
equal to $-\k_Y$, where $\k_Y:=-2\,\l_0-2\,\l_1+\p_1+\cdots+\p_4$ denotes the \df{canonical class} of $Y$
(see \citep[Section~2]{2021circ}).

We consider the following subsets of $N(X)$:
\begin{Mlist}
\item $B(X)$ denotes the set of divisor classes of complex irreducible curves $C\subset Y$
\st $f(C)$ is a complex point in~$X$,
\item $G(X)$ denotes the set of classes of complex irreducible conics in $X$, and
\item $E(X)$ denotes the set of classes of complex lines in $X$.
\end{Mlist}

We use the following shorthand notation for elements in $N(X)$:
\begin{gather*}
\begin{array}{@{}l@{\hspace{1cm}}l@{\hspace{1cm}}l@{}}
b_1:=\p_1-\p_3, & b_{ij}:=\l_0-\p_i-\p_j   & b_0:=\l_0+\l_1-\p_1-\p_2-\p_3-\p_4, \\
b_2:=\p_2-\p_4, & b_{ij}':=\l_1-\p_i-\p_j, &
e_i:=\p_i,\quad e_{ij}:=\l_i-\p_j,\quad e_i':=b_0+\p_i.
\end{array}
\end{gather*}

{\bf Claim~1.}
{\it
If, up to $\aut N(X)$, we have
$\{b_1,b_2\}\subseteq B(X)\subseteq\{b_1,b_2,b_0\}$,
\[
\{\l_0,\l_1\}\subset G(X)
\quad\text{and}\quad
\{e_3,e_4,e_{01},e_{02},e_{11},e_{12}\}\subset E(X),
\]
then there exist a birational morphism~$g\c Y\to\P^1\times\P^1$
and complex conjugate curves
$
A,\oA,C,\oC,D,\oD\subset Y
$
\st
\begin{Mlist}
\item $f(A)$, $f(\oA)$, $f(C)$, $f(\oC)$, $f(D)$, $f(\oD)$
are complex lines in $X$ with classes
$e_3$, $e_4$, $e_{01}$, $e_{02}$, $e_{11}$, $e_{12}$, \resp,
\item $f\circ g^{-1}\c \P^1\times\P^1\dto X$ is a birational map of bidegree (2,2)
that is not defined at $(p,q):=g(A)$ and $(\op,\oq):=g(\oA)$, and
\item
$g(C)$ and $g(\oC)$
are the left fibers associated to $p$ and $\op$, \resp, and
\item
$g(D)$ and $g(\oD)$ are
the right fibers associated to $q$ and $\oq$, \resp.
\end{Mlist}
}

We let the complex curves~$A$, $\oA$, $C$, $\oC$, $D$ and $\oD$ in $Y$ be the strict transforms via~$f$
of the complex lines in~$X$ with classes
$e_3$, $e_4$, $e_{01}$, $e_{02}$, $e_{11}$ and $e_{12}$, \resp.
It follows from the arithmetic and geometric genus formulas \citep[Exercises~IV.1.8a and V.1.3]{1977}
that $p_a(A)=p_g(A)=p_a(\oA)=p_g(\oA)=0$ and thus $A\cong\oA\cong\P^1$.
Since $[A]^2=[\oA]^2=-1$,
Castelnuovo's contraction theorem implies
that there exists a birational morphism $u\c Y\to Y'$ that contracts
$A$ and $\oA$ to complex conjugate points~$P_3$ and~$P_4$ on a smooth surface~$Y'$
(see \citep[Theorem~V.5.7]{1977}).
We know from \citep[Propositions~V.3.2 and~V.3.3]{1977} that
$N(X)=N(Y')\oplus\bas{\p_3,\p_4}_\Z$ and $\k_{Y'}=-2\,\l_0-2\,\l_1+\p_1+\p_2$
so that $N(Y')\cong\bas{\l_0,\l_1,\p_1,\p_2}_\Z$.
We deduce from the Riemann-Roch theorem \citep[Theorem~V.1.6]{1977}
that there exist complex conjugate
curves~$B,\oB\subset Y'$ \st $[B]=\p_1$ and $[\oB]=\p_2$.
As before, the genus formulas
and Castelnuovo's contraction theorem imply that there exists a birational morphism
$v\c Y'\to Y''$ that contracts $B$ and $\oB$ to complex conjugate points $P_1$ and $P_2$
on a smooth surface $Y''$
with $N(Y'')=\bas{\l_0,\l_1}_\Z$ and $\k_{Y''}=-2\,\l_0-2\,\l_1$.
We let $g:=v\circ u$ be the composition of blowups $u$ and $v$.
Since, $\k_{Y''}^2=8$, $-\k_{Y''}\cdot \l_i>0$ and $g^*(\l_i)\in G(X)$ for $i\in\{0,1\}$,
it follows that $-\k_{Y''}$ is ample
and thus $Y''$ is a del Pezzo surface of degree 8
so that $Y''\cong\P^1\times\P^1$ (see \citep[Definition~8.1.12 and \textsection8.4.1]{2012dol}).
As $b_1,b_2\in B(X)$, we deduce that $P_3\in B$ and $P_4\in\oB$
and thus $P_3$ and $P_4$ are infinitely near to~$P_1$ and $P_2$,
\resp~(see \citep[Definition at page~392]{1977}).
We set $(p,q):=P_1$ and $(\op,\oq):=P_2$.
Since $-\k_Y$ is the class of hyperplane section of $X$ and $g_*(\k_Y)=\k_{Y''}$,
it follows that the components of the map $f\circ g^{-1}$
form a basis of the 5-dimensional vector space of bidegree (2,2) forms that
vanish at $P_1$, $P_2$ and the infinitely near points $P_3$, $P_4$.
Thus the complex tangent directions at $P_1$ and $P_2$
of the zero sets of these forms are determined by $P_3$ and $P_4$, \resp.
We recover the 5-dimensional subspace of the 9-dimensional vector space of bidegree~(2,2) forms
by assigning the four conditions corresponding to $P_1$, $P_2$, $P_3$ and $P_4$.
The map $f$ associated to~$-\k_Y$ is birational by \citep[Theorem~8.3.2]{2012dol}
and thus $f\circ g^{-1}$ is birational as well.
Since $u(A)\in B$ and $u(\oA)\in\oB$, we established that $g(A)=P_1=(p,q)$ and $g(\oA)=P_2=(\op,\oq)$.
Notice that the classes $\l_0$ and $\l_1$ in $N(\P^1\times\P^1)$
are the classes of a left fiber and right fiber, \resp.
Therefore,
the preimage via $g$ of the left fiber associated to $p$ has class $e_{01}$ in $Y$ by \citep[Proposition~V.3.6]{1977}.
Similarly,
the preimage of the right fiber associated to $q$ has class $e_{11}$.
From this we deduce that
$g(C)$, $g(\oC)$, $g(D)$ and $g(\oD)$ are as asserted
and we concluded the proof of~Claim~1.

If $X$ is a CH1 cyclide, then
it follows from \citep[Theorem~A]{2023trans} that, up to $\aut N(X)$,
\[
B(X)=\{b_{13},b_{24},b_{12}'\},~
\{g_0,g_1,g_{34}\}\subset G(X),~
E(X)=\{e_1,e_2,e_3,e_4,e_{13},e_{14}\},
\]
where $g_0:=\l_0$, $g_1:=\l_1$ and $g_{34}:=\l_0+\l_1-\p_1-\p_2$.
Let $\gamma\in\aut N(X)$ \st
\begin{gather*}
\gamma(\l_0)=\l_0,\quad
\gamma(\l_1)=\l_0+\l_1-\p_1-\p_2,\quad
\\
\gamma(\p_1)=\p_3,\quad
\gamma(\p_2)=\p_4,\quad
\gamma(\p_3)=\l_0-\p_1,\quad
\gamma(\p_4)=\l_0-\p_2.
\end{gather*}
After applying $\gamma$, we find that if $X$ is a CH1 cyclide, then, up to $\aut X$,
\[
B(X)=\{b_1,b_2,b_0\},~~
\{\l_0,\l_1\}\subset G(X),~~
E(X)=\{e_3,e_4,e_{01},e_{02},e_{12},e_{11}\}.
\]
If $X$ is a Perseus cyclide, then it follows from \citep[Theorem~A]{2023trans} that, up to $\aut N(X)$,
\[
B(X)=\{b_1,b_2\},~~
\{\l_0,\l_1\}\subset G(X),~~
E(X)=\{e_3,e_4,e_{01},e_{02},e_{11},e_{12},e_4',e_3'\}.
\]
Suppose that $X\subset\S^3$ is a Perseus cyclide or CH1 cyclide.
We apply Claim~1 and find that $\varphi:=\nu\circ f\circ g^{-1}$
is a biquadratic and birational map that is not defined at
the complex conjugate points $(p,q)$ and $(\op,\oq)$ in~$\P^1\times\P^1$
for all $\nu\in\aut\S^3$.
It follows from \citep[Proposition~13 (see also Example~15)]{2023trans} that
the incidences between complex lines in $X$ are as in \Cref{fig:lines}.
\begin{figure}[!ht]
\centering
\setlength{\tabcolsep}{5mm}
\begin{tabular}{cc}
\begin{tikzpicture}[scale=0.4] 
\node at (0,6) {};\node at (0,-6) {};
\draw[red] (-3,2) -- (3.5,2) node[right] {$e_{12}$};
\draw[red] (-3.5,-2) node[left] {$e_{11}$} -- (3,-2);
\draw[red] (-2,3) -- (-2,-3.5) node[below] {$e_{01}$};
\draw[red] (2,3.5) node[above] {$e_{02}$} -- (2,-3);
\draw[red] (-5,4.3) node[left,red]  {$e_4$} -- (3,1.6);
\draw[red] (-4.3,5) node[above,red] {$e_3'$} -- (-1.6,-3);
\draw[red] (5,-4.3) node[right,red] {$e_3$} -- (-3,-1.6);
\draw[red] (4.3,-5) node[below,red] {$e_4'$} -- (1.6,3);
\draw[draw=black, fill=white, densely dotted] (-2,2) circle [radius=3mm];
\draw[draw=black, fill=white, densely dotted] (2,-2) circle [radius=3mm];
\draw[draw=black, fill=white, densely dotted] (-4,4) circle [radius=3mm];
\draw[draw=black, fill=white, densely dotted] (4,-4) circle [radius=3mm];
\draw[draw=black, fill=white, densely dotted] (-2,-2) circle [radius=6mm] node[black] {\footnotesize $b_1$};
\draw[draw=black, fill=white, densely dotted] (2,2)   circle [radius=6mm] node[black] {\footnotesize $b_2$};
\node at (-1,-5.5) {Perseus cyclide};
\end{tikzpicture}
&
\begin{tikzpicture}[scale=0.4] 
\node at (0,6) {};\node at (0,-6) {};
\draw[red] (-3,2)  -- (3.5,2) node[right] {$e_{01}$};
\draw[red] (-3.5,-2) node[left] {$e_{02}$} -- (3,-2);
\draw[red] (-2,3) -- (-2,-3.5) node[below] {$e_{12}$};
\draw[red] (2,3.5) node[above] {$e_{11}$} -- (2,-3);
\draw[red] (-5,4.3) node[left, red] {$e_3$} -- (3,1.6);
\draw[red] (-4.3,5) node[above, red] {$e_4$} -- (-1.6,-3);
\draw[draw=black, fill=white, densely dotted] (-2,2) circle [radius=3mm];
\draw[draw=black, fill=white, densely dotted] (2,-2) circle [radius=3mm];
\draw[draw=black, fill=white, densely dotted] (2,2) circle [radius=6mm] node[black] {\footnotesize$b_1$};
\draw[draw=black, fill=white, densely dotted] (-2,-2) circle [radius=6mm] node[black] {\footnotesize$b_2$};
\draw[draw=black, fill=green!5] (-4,4) circle [radius=6mm] node[black] {\footnotesize$b_0$};
\node at (0,-5.5) {CH1 cyclide};
\end{tikzpicture}
\end{tabular}
\caption{
Incidences between complex lines and isolated singularities in a
Perseus cyclide and CH1 cyclide~$X\subset\S^3$.
Each complex line is represented as a line segment and labeled with its corresponding class in $E(X)$.
If $[C]\in B(X)$, then the complex point $f(C)$
is represented as a disc labeled with $[C]$.
If $f(C)$ is real or non-real, then the disc has a solid and dashed
border, \resp.
}
\label{fig:lines}
\end{figure}
Hence, we conclude from Claim~1 and \Cref{lem:S3} that there exists
a M\"obius transformation $\nu\in\aut\S^3$ \st
the two pairs
\[
L_p:=\nu\circ f(C),\quad
L_\op:=\nu\circ f(\oC)
\quad\text{and}\quad
R_q:=\nu\circ f(D),\quad
R_\oq:=\nu\circ f(\oD)
\]
of skew complex conjugate lines are contained in~$\cE$.
Notice that $L_p$ is the left image associated to~$p$,
since $\varphi=\nu\circ f\circ g^{-1}$ and
$g(C)$ is by Claim~1 the left fiber associated to~$p$.
Similarly, for $L_\op$, $R_q$ and $R_\oq$.
In particular, this shows that $\varphi$ is $\cE$-decomposable
and thus we concluded the proof for Assertion~\ref{thm:B:e} of~\Cref{thm:B}.
\end{proof}

\section{Sums and products of circles are not equivalent}
\label{app:ABCD}

This goal of this appendix is to prove \Cref{prp:ABCD},
which states that if $X=\bS(A+B)$ for some circles~$A,B\in\R^3$,
then $\nu(X)\neq\bP(C\cdot D)$ for all circles~$C,D\subset S^3$ and M\"obius transformations~$\nu\in\aut\S^3$.

\begin{lemma}
\label{lem:LLRR}
If $\varphi\c\P^1\times\P^1\to X\subset\S^3$ is a biquadratic birational morphism
and $H\cap X$ contains four complex lines for some~$H\in\{\cE,\cU\}$,
then $\varphi$ is $H$-decomposable
and $H\cap X\subset\Sing X$ consists of four complex double lines.
\end{lemma}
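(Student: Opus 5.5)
The plan is a degree count on the hyperplane section $X\cap H$, combined with \Cref{lem:Y} and \Cref{lem:nopoint}.

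\textbf{Step 1: the four lines come from fibers.} Let $M_1,\ldots,M_4\subset H\cap X$ be the four complex lines. Since $\varphi$ is a birational morphism, each $M_i$ is the image of some complex curve $F_i\subset\P^1\times\P^1$ of bidegree $(a_i,b_i)$ that is not contracted. The hyperplane section $X\cap H$ is the image of a curve $G$ of bidegree $(2,2)$ by \RL{Y}{B}. Since $\deg X=8$ by \RL{Y}{C}, we have
\[
\deg(X\cap H)=8.
\]
By \RL{Y}{E}, each line contributes
\[
\deg M_i=1=2\,(a_i+b_i)/d_i.
\]
Hence $d_i=2(a_i+b_i)$, so $\varphi|_{F_i}$ is at least $2:1$ onto $M_i$. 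Because $\varphi$ is birational, a curve mapped with degree $d_i\ge 2$ onto its image must lie in the locus where $\varphi$ fails to be injective. Consequently $M_i$ has multiplicity $\ge 2$ in $X$, so $M_i\subset\Sing X$.

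\textbf{Step 2: the section is exactly four double lines.} Counting with multiplicity in $X\cap H$, four lines of multiplicity at least $2$ already contribute degree $\ge 8=\deg(X\cap H)$. Therefore $X\cap H=M_1\cup\cdots\cup M_4$, and each $M_i$ is a double line. The preimage $G$ of the section is then the union of the $F_i$, and $[G]=2\ell_0+2\ell_1$. Using $d_i=2(a_i+b_i)$ together with
\[
\sum_i d_i\deg M_i\le 8
\]
(pulling back via $\varphi$), we get $\sum_i (a_i+b_i)\le 4$ with each $a_i+b_i\ge 1$. Hence $a_i+b_i=1$ for every $i$, so each $F_i$ is a left or right fiber. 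Since the bidegrees must sum to $(2,2)$, exactly two of the $F_i$ are left fibers and two are right fibers.

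\textbf{Step 3: the reality pairing.} $H$ is real, since $\cE$ and $\cU$ are defined over $\R$, so $\sigma_X$ permutes the four lines. No $M_i$ is real, because $\S^3$ contains no real lines. Complex conjugation sends left fibers to left fibers, since $\varphi$ is defined over $\R$. Hence the two left images are swapped by conjugation and form a pair $L,\oL$, and likewise the right images form a pair $R,\oR$. This is precisely $H$-decomposability.

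\textbf{Main obstacle.} The delicate point is Step 1, namely justifying that a line image has multiplicity at least $2$ in $X$ and that the degree count on $X\cap H$ is legitimate as a divisor count. To avoid intersection multiplicities on the singular $X$, I would argue on $\P^1\times\P^1$ instead. The pullback $\varphi^*(H)=G$ has class $2\ell_0+2\ell_1$ and contains each $F_i$. Since $\varphi_*[F_i]=d_i[M_i]$ with $d_i\geq 2$, we obtain
\[
8=\varphi_*[G]\cdot h\ge \sum_i d_i\ge 8.
\]
This forces equality, gives $G=F_1+\cdots+F_4$, and yields the double-line and $\Sing X$ conclusions.
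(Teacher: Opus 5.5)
Your proof is correct and follows essentially the paper's argument: \RL{Y}{E} relates the covering degree of each line to the bidegree of its preimage, a Bézout count uses $\deg X=8$ on the hyperplane section, and compatibility of $\varphi$ with complex conjugation pairs the lines. The differences are only in ordering. You extract $d_i=2(a_i+b_i)\ge 2$ first and bound $\sum_i d_i\le 8$ through the class $2\ell_0+2\ell_1$ of $\varphi^*H$, whereas the paper first bounds each multiplicity by three using conjugate pairs. You also derive explicitly from the bidegree sum that exactly two preimages are left fibers and two are right fibers, a point the paper leaves implicit.
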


\begin{proof}
Any complex line contained in $H$ is non-real
and thus the complex lines in the real hyperplane section~$H\cap X$
come in complex conjugate pairs $L$, $\oL$ and $R$, $\oR$.
We denote their preimages in $\P^1\times\P^1$ by $A$, $\oA$ and $B$, $\oB$, \resp.
We know from \RL{Y}{C} that $\deg X=8$.
Complex conjugate lines in~$X$ have equal multiplicities.
Therefore, B\'ezout's theorem implies that a complex line in $H\cap X$ has multiplicity
at most three.
Thus, for all $F\in\{A,B\}$, the general fiber of $\varphi|_F$ has cardinality $d\leq 3$.
We apply \RL{Y}{E} and find that $d=2$ and $F$ has bidegree either~$(1,0)$ or~$(0,1)$
so that it is a left fiber and right fiber, \resp.
This implies in particular that the complex lines in $X\cap H$ have multiplicity at least~$d=2$.
It follows from B\'ezout's theorem that
$X\cap H=L\cup\oL\cup R\cup\oR$ and $L,\oL,R,\oR\subset X$ are complex double lines.
The complex conjugate of a left or right fiber is again left and right, \resp.
We conclude that $\varphi$ is $H$-decomposable.
\end{proof}

We may identify $\S^3_\R\cong S^3$ with the unit-quaternions
and thus the Hamiltonian product $\_\cdot\_\c S^3\times S^3\to S^3$
extends to the rational map $\_\circledast\_\c\S^3\times\S^3\dto \S^3$ that is defined as
\begin{align*}
(x,y)\mapsto (x_0y_0:& ~x_1y_1-x_2y_2-x_3y_3-x_4y_4:x_1y_2+x_2y_1+x_3y_4-x_4y_3:\\
                     & ~x_1y_3-x_2y_4+x_3y_1+x_4y_2:x_1y_4+x_2y_3-x_3y_2+x_4y_1).
\end{align*}
Let the rational map $\_\odot\_\c\P^3\times\P^3\dto \P^3$ be defined as
\begin{align*}
(x,y)\mapsto(&x_1y_1-x_2y_2-x_3y_3-x_4y_4:x_1y_2+x_2y_1+x_3y_4-x_4y_3:\\
             &x_1y_3-x_2y_4+x_3y_1+x_4y_2:x_1y_4+x_2y_3-x_3y_2+x_4y_1),
\end{align*}
where $x=(x_1:\ldots:x_4)$ and $y=(y_1:\ldots:y_4)$.

\begin{proposition}
\label{prp:el}
We consider the diagram at \Cref{tab:el},
where $S^3$ is identified with the unit-quaternions,
the morphism $\tau$ is the 2:1 central projection
and
\[
[h_1+h_2\,\qi+h_3\,\qj+h_4\,\qk]=(h_1:h_2:h_3:h_4).
\]
\begin{table}[!ht]
\caption{See \Cref{prp:el}.}
\label{tab:el}
\centering
\begin{tikzpicture}[node distance=8mm and 20mm, auto]
\node (A) {$S^3$};
\node (B) [right= of A] {$\S^3$};
\node (C) [right= of B] {$\P^3$};
\node (D) [right= of C] {$\HC$};
\node (a) [below= of A] {$S^3$};
\node (b) [right= of a] {$\S^3$};
\node (c) [right= of b] {$\P^3$};
\node (d) [right= of c] {$\HC$};

\draw[arrows={Hooks[right]->}] (A) to node {$\bP$} (B);
\draw[->] (B) to node {$\tau$} (C);
\draw[<-] (C) to node {$[\_]$} (D);

\draw[arrows={Hooks[right]->}] (a) to node[swap] {$\bP$} (b);
\draw[->] (b) to node[swap] {$\tau$}(c);
\draw[<-] (c) to node[swap] {$[\_]$} (d);

\draw[->] (A) to node[swap] {$\psi_1$} (a);
\draw[->] (B) to node[swap] {$\psi_2$} (b);
\draw[->] (C) to node       {$\psi_3$} (c);
\draw[->] (D) to node       {$\psi_4$} (d);
\end{tikzpicture}
\end{table}

Suppose that $q\in S^3$. There exists $h\in\HC$ \st $\tau(\bP(q))=[h]$
and if we define either
\begin{Mlist}
\item
$\psi_1(x):=q\cdot x$,\quad
$\psi_2(x):=\bP(q)\circledast x$,\quad
$\psi_3(x):=[h]\odot x$,\quad
$\psi_4(x):=h\cdot x$, or
\item
$\psi_1(x):=x\cdot q$,\quad
$\psi_2(x):=x\circledast\bP(q)$,\quad
$\psi_3(x):=x\odot[h]$,\quad
$\psi_4(x):=x\cdot h$,
\end{Mlist}
then the diagram in \Cref{tab:el} commutes,
$\psi_1$ is an isometry,
$\psi_2\in\aut\S^3$ \st $\psi_2(\cE)=\cE$,
$\psi_3\in\aut\P^3$ \st $\psi_3(\cN)=\cN$, and
$\psi_4$ is bijective.
\end{proposition}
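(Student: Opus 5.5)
We will verify everything by unwinding the explicit formulas; the whole statement is a bookkeeping check once $h$ is identified. Write $q=q_1+q_2\,\qi+q_3\,\qj+q_4\,\qk\in S^3$, so $\bP(q)=(1:q_1:q_2:q_3:q_4)$ and $\tau(\bP(q))=(q_1:q_2:q_3:q_4)$. We set $h:=q\in\H\subset\HC$, so that $[h]=\tau(\bP(q))$. We treat the first (left multiplication) case; the second is identical with the order of factors reversed.

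\emph{Commutativity.} Comparing the formula for $\_\odot\_$ with the quaternion multiplication table shows that $[a]\odot[b]=[a\cdot b]$ whenever $a\cdot b\neq 0$. This gives commutativity of the right square, $\psi_3([x])=[h\cdot x]=[\psi_4(x)]$. Similarly, the last four coordinates of $x\circledast y$ are exactly the $\odot$-coordinates of $(x_1:\ldots:x_4)$ and $(y_1:\ldots:y_4)$, and the zeroth coordinate is $x_0y_0$. Hence $\tau(\bP(q)\circledast x)=[h]\odot\tau(x)$, which is the middle square. For $x\in S^3$ we have $\bP(q)\circledast\bP(x)=(1:q\cdot x)=\bP(q\cdot x)$, which is the left square.

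\emph{Properties of each $\psi_i$.} For $\psi_1$, the isometry property follows from multiplicativity of the norm, $(q\cdot x)(q\cdot x)^*=q\,(x x^*)\,q^*=(qq^*)(xx^*)=xx^*$ by \RL{basic}{a}, together with linearity. For $\psi_4$, bijectivity holds because $q\cdot q^*=1$, so $x\mapsto q^*\cdot x$ is an inverse. For $\psi_3$, it is the projectivization of the linear bijection $\psi_4$, so $\psi_3\in\aut\P^3$. It preserves $\cN$ because $(h x)(h x)^*=(hh^*)(xx^*)=xx^*$, so $x\cdot x^*=0$ if and only if $(h\cdot x)(h\cdot x)^*=0$. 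For $\psi_2$, it is the linear map $(x_0,\vec x)\mapsto(x_0,\,q\cdot\vec x)$, which is invertible with inverse given by $q^*$. It preserves the form $-x_0^2+\vec x\,\vec x^*$ by the norm identity, so $\psi_2\in\aut_\C\S^3$. It has real coefficients and therefore commutes with $\sigma_{\P^4}$, so $\psi_2\in\aut\S^3$. Finally, it fixes the coordinate $x_0$, so $\psi_2(\cE)=\cE$.

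\emph{Main obstacle.} The only nontrivial point is the coordinate matching: that the five formulas for $\circledast$ and the four for $\odot$ are literally the structure constants of quaternion multiplication. This can be checked by expanding $(x_1+x_2\qi+x_3\qj+x_4\qk)(y_1+y_2\qi+y_3\qj+y_4\qk)$, for instance $\qi\qj=\qk$ gives the term $x_2y_3$ in the $\qk$-coordinate. A secondary subtlety is that $\circledast$ and $\odot$ are only rational maps. Since $\psi_4$ is bijective, $h\cdot x\neq 0$ for $x\neq 0$, so with one argument fixed to $\bP(q)$ or $[h]$ they are everywhere defined, and $\psi_2,\psi_3$ are genuine morphisms.
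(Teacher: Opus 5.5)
Your proof is correct and is essentially the paper's argument. The isometry claim comes from multiplicativity of the norm via \RL{basic}{a}, and the commutativity of the diagram and the claims about $\psi_2,\psi_3,\psi_4$ are checked directly against the definitions of $\circledast$, $\odot$ and the quaternion product. You spell out what the paper leaves as ``a direct consequence of the definitions'': the choice $h=q$, the matching of structure constants, the inverses via $q^*$, and the preservation of the quadratic forms. The same checks go through verbatim for any other representative $h=c\,q$ with $c\in\C\setminus\{0\}$.
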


\begin{proof}
The existence of $h$ follows from the surjectivity of $[\_]$.
If $\psi_1(x)=q\cdot x$, then it follows from \RL{basic}{a} that
\[
\psi_1(x-y)^*\cdot\psi_1(x-y)=(x-y)^*\cdot q^*\cdot q\cdot(x-y)=(x-y)^*\cdot(x-y).
\]
Since the squared Euclidean distance between $x,y\in S^3$ is equals to $(x-y)^*\cdot(x-y)$,
we conclude that $\psi_1$ is an isometry.
Similarly, if $\psi_1(x)=x\cdot q$, then $\psi_1$ is again an isometry.
The commutativity of \Cref{tab:el} and the assertions about $\psi_i$ for $i\in\{2,3,4\}$
are now a direct consequence of the definitions of
the Hamiltonian product and the maps~$\psi_i$.
\end{proof}

\begin{proposition}
\label{prp:CDE}
If $\varphi\c\P^1\times\P^1\to X$ is a
biquadratic birational morphism and $X=\bP(C\cdot D)$
for some circles $C,D\subset S^3$,
then $\varphi$ is $\cE$-decomposable.
\end{proposition}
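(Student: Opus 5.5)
By \Cref{lem:LLRR} applied with $H=\cE$, it suffices to show that $X\cap\cE$ contains four complex lines. Such lines can be read off from a parametrization of $C\cdot D$ by quaternionic polynomials.

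First, I normalize using \Cref{prp:el}. Since left and right multiplication by a unit quaternion induce elements of $\aut\S^3$ preserving $\cE$, we may replace $C,D$ by $q\cdot C$ and $D\cdot q'$ without changing the conclusion; the $\cE$-decomposability of $\varphi$ transfers along $\psi_2$. Each circle in $S^3$ admits a rational parametrization of degree two, which lets me write it as $U(s)=S\cdot\tS/(S\cdot S^*)$ with linear $S,\tS\in\H[s]$ and $S\cdot S^*=\tS\cdot\tS^*=u\,(s-a)(s-\oa)$. This is the converse direction of \Cref{lem:SSTT}, and the factorization follows from \Cref{lem:uni} applied to a degree-two quaternionic polynomial whose norm is the square of a real quadratic. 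Similarly $V(t)=\tT\cdot T/(T\cdot T^*)$. Then $Q:=S\cdot\tS\cdot\tT\cdot T$ gives a biquadratic parametrization $\psi$ of $X=\bP(C\cdot D)$ with $Q\cdot Q^*=Q_0^2$ and $Q_0=(S\cdot S^*)(T\cdot T^*)$.

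Next, I locate the lines. At $t=b$, a root of $T\cdot T^*$, the curve $[Q(s,b)]$ lies in $\cN$. By \Cref{cor:fct}\ref{cor:fct:a}, applied to the right factor $t-h$ of $Q$ coming from $T$, it is contained in a left generator $\cL_\alpha$. Its conjugate $[Q(s,\ob)]$ lies in $\cL_\oalpha$. Symmetrically, $[Q(a,t)]$ and $[Q(\oa,t)]$ lie in right generators by \Cref{cor:fct}\ref{cor:fct:b}. Pulling back through the central projection $\tau$, whose ramification locus is $\cE$ and which restricts to an isomorphism $\cE\cong\cN$, these give curves in $X\cap\cE$, because $Q_0$ vanishes there. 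Provided they are genuine lines and not points, they are four complex lines in $X\cap\cE$.

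The main obstacle is ruling out that $[Q(s,b)]$ degenerates to a point, and relating $\psi$ to the given $\varphi$. For the first, \Cref{lem:basic}\ref{lem:basic:c} reduces degeneracy to $Q(s,b)$ acquiring a factor $s-c$, that is, a base point $Q(c,b)=0$. Such a base point forces $X$ to have degree less than $8$, since $\varphi$ is a biquadratic birational morphism and so $\deg X=8$ by \Cref{lem:Y}\ref{lem:Y:C}. So I would first show that $\psi$ has no base points. Then $\psi$ and $\varphi$ are both birational morphisms from $\P^1\times\P^1$ whose hyperplane class is $(2,2)$. Hence $\varphi^{-1}\circ\psi$ is an automorphism of $\P^1\times\P^1$ preserving the class $(2,2)$, possibly swapping factors. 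The four lines are then left and right images for $\varphi$ as well; alternatively, I would invoke \Cref{lem:LLRR} directly, which needs only that there are four lines. If showing the absence of base points is delicate, I would argue instead that a base point would make $\psi$ a non-morphism of a degree-$8$ surface, contradicting the degree count of \Cref{lem:Y} via the self-intersection $8$ minus base-point contributions.
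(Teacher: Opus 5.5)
Your proposal is correct. It finds the four lines in $X\cap\cE$ by a different mechanism than the paper, although both arguments finish with \Cref{lem:LLRR}.

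\textbf{How the two routes differ.} The paper argues synthetically from the group structure of $S^3$. For general $a,b\in C$, left multiplication by $q=b\cdot a^*$ maps the circle $\bP(\{a\}\cdot D)$ onto $\bP(\{b\}\cdot D)$ and preserves $\cE$. By \Cref{prp:el} and \RL{N}{d} it also sends left generators to left generators. Hence the two points of $\bP(\{a\}\cdot D)\cap\cE$ stay on one fixed conjugate pair of left generators as $a$ varies. \RL{C}{a} and \RL{C}{b} guarantee that these points actually move, so the generators lie in $X$. The right generators are obtained symmetrically from $C\cdot\{d\}$.

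You instead build the explicit parametrization $Q=S\cdot\tS\cdot\tT\cdot T$, which is the converse of \Cref{lem:SSTT} obtained via \Cref{lem:uni}. You observe that $\{Q_0=0\}$ is exactly the four fibers $s\in\{a,\oa\}$ and $t\in\{b,\ob\}$. Their images lie in $\cE$ and project into generators, and you rule out collapse to points by showing that $\psi$ is a base-point-free birational morphism. This route gives more information: the lines are identified as explicit left and right images of the fibers over the roots of the norm polynomials, so the proposition becomes a direct converse of \Cref{prp:ECD}. The paper's route avoids any base-point analysis, at the cost of the genericity argument through \Cref{lem:C}.

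\textbf{The degree count is the real crux.} The argument you relegate to a fallback is in fact the key step, and it is not in the paper, whose \Cref{lem:Y} treats morphisms only. State it as the main argument. The moving part of the linear system spanned by $Q_0,\dots,Q_4$ has self-intersection at most $8$, with equality only if there is no fixed component and no base point. Since $\deg X=8$ by \RL{Y}{C}, $\psi$ is a birational morphism. The same count also shows that $Q$ is primitive, which \Cref{cor:fct} requires and which you never state. Once $\psi$ is a morphism, \RL{Y}{D} directly excludes fibers collapsing to points. This replaces your appeal to \RL{basic}{c}, which is stated only for linear polynomials.

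\textbf{Smaller points.}
\begin{itemize}
\item You do not need \Cref{cor:fct} at all: $Q(s,b)\cdot T(b)^*=0$ with $T(b)\,T(b)^*=0$ already puts $[Q(s,b)]$ in $\cL_{T(b)^*}$.
\item You should say why the four lines are distinct. Conjugate lines in $\cE$ differ because $\cE$ has no real points, and left and right generators lie in different rulings of $\cN$.
\item The initial normalization via \Cref{prp:el} and the comparison of $\varphi^{-1}\circ\psi$ are unnecessary, since \Cref{lem:LLRR} only needs four lines in $X\cap\cE$.
\end{itemize}
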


\begin{proof}
Let $A:=\bP(\{a\}\cdot D)$ and $B:=\bP(\{b\}\cdot D)$ for general $a,b\in C$.
Since $A$ is the Zariski closure of~$\set{\bP(a)\circledast\bP(d)}{d\in D}$,
it follows from \Cref{prp:el} that $A$ is a M\"obius transformation of the circle~$\bP(D)$.
Notice that $\cE\cap X$ is a hyperplane section of $X$
so that $|A\cap\cE|=2$ by B\'ezout's theorem.
Therefore,
there exist complex conjugate quaternions~$\alpha,\oalpha\in\HC$
\st the central projection~$\tau(A\cap\cE)$ equals $\{[\alpha],[\oalpha]\}\subset\cN$.
Similarly, we find that $\tau(B\cap\cE)=\{[\beta],[\obeta]\}\subset\cN$.
We know from Lemmas~\ref{lem:C}\ref{lem:C:a} and~\ref{lem:C}\ref{lem:C:b} that $|A\cap B|<2$,
which implies that $\{[\alpha],[\oalpha]\}\neq\{[\beta],[\obeta]\}$.
Let $q:=b\cdot a^*$ in $S^3$ so that $q\cdot a=b$
and let $h\in\HC$ \st $[h]=\tau(\bP(q))$.
It follows from \Cref{prp:el} with $\psi_1(x)=q\cdot x$
that $\psi_2(A)=B$, $\psi_3(\tau(A))=\tau(B)$ and $\psi_4(\alpha)=\beta$.
This implies that $[h\cdot\alpha]=[\beta]$
and thus $[\alpha],[\beta]\in\cL_\alpha$ by \RL{N}{d}.
Since $a,b\in C$ are general and $|A\cap B\cap\cE|=0$,
we deduce that $X\cap\cE$ contains the complex conjugate left generators
$\cL_\alpha$ and $\cL_\oalpha$, which are complex lines by \RL{N}{a}.
The analoguous arguments for instead $C\cdot\{a\}$ and $C\cdot\{b\}$
show that
there exists $\alpha\in\HC$ \st
$\cR_\alpha$ and $\cR_\oalpha$
are complex conjugate lines in $X\cap\cE$.
Thus $X\cap\cE$ consists of four complex lines
so that $\varphi$ is $\cE$-decomposable by \Cref{lem:LLRR}.
\end{proof}

The following lemma is the converse of \Cref{lem:LLpar}.
Recall that two circles in $\R^3$ are parallel if their spanning planes are.

\begin{lemma}
\label{lem:parLL}
If $D,D'\subset\R^3$ are parallel circles,
then
for all $C\in\{\bS(D),\bS(D')\}$
the curve
$C\subset\S^3$ is a circle
and
there exist complex conjugate lines~$L,\oL\subset\cU$
\st
$|C\cap L|=|C\cap\oL|=1$ and $|C\cap L\cap\oL|=0$
.
\end{lemma}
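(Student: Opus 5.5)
We plan to reverse the computation in the proof of \Cref{lem:LLpar}. Both circles lie in parallel planes, so the projective closures $\bP(D)$ and $\bP(D')$ lie in planes $\bP(H),\bP(H')\subset\P^3$ that meet the plane at infinity $\{y_0=0\}$ in the same real line $M$. Write $D=S_c\cap H$ for a sphere $S_c$, with $\bP(S_c)$ defined as in \Cref{lem:LLpar}. On $\{y_0=0\}$ the equation of $\bP(S_c)$ becomes $y_1^2+y_2^2+y_3^2=0$, which is independent of $c$. Hence $\bP(D)\cap\{y_0=0\}=M\cap\{y_0=y_1^2+y_2^2+y_3^2=0\}$ consists of two complex conjugate points $p,\op$. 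This intersection is transversal because $M$ is real and the absolute conic has no real points, so $p\neq\op$. The same computation gives the same two points for $\bP(D')$.

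Next we lift to $\S^3$. Since $\pi$ sends $x$ to $(x_0-x_4:x_1:x_2:x_3)$, the set $\pi(\cU)$ is the conic $\{z_0=z_1^2+z_2^2+z_3^2=0\}$, and each point $z\in\pi(\cU)$ has fiber $\pi^{-1}(z)$ equal to a complex line in $\cU$ through the center $o:=(1:0:0:0:1)$. This is the same fact used in \Cref{lem:L}: the lines in $\S^3$ through $o$ are exactly the lines of the cone $\cU$. Set $L:=\overline{\pi^{-1}(p)}$ and $\oL:=\overline{\pi^{-1}(\op)}$. These are complex conjugate lines in $\cU$ with $L\cap\oL=\{o\}$.

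We then check the required properties of $C=\bS(D)$. The inverse stereographic projection is birational and sends the real plane conic $\bP(D)$, which has infinitely many real points, to an irreducible conic $C$ with infinitely many real points. Concretely, $C$ is the intersection of $\S^3$ with the hyperplane spanned by the lift of the sphere and of the plane, or equivalently $\bS(S_c)\cap\bS(H)$. The real points of $C$ form a circle in $S^3$, because a real plane section of $S^3$ with infinitely many real points is a circle. So $C$ is a circle.

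The step that needs care is to show $|C\cap L|=1$ and $o\notin C$. The plan is as follows.

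First, we show $o\notin C$. If $o\in C$, then $\pi(C)$ would be a line, since a conic through the projection center projects to a line. But $\pi(C)=\bP(D)$ is an irreducible conic, so this is impossible. In particular, $|C\cap L\cap\oL|=0$.

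Second, we show $C\cap L\neq\varnothing$. The point $p$ lies in $\bP(D)$, and $p$ is not in the image of points of $\S^3$ off the exceptional locus except via $L$. Because $C$ is closed and $\pi|_C\c C\to\bP(D)$ is a finite birational morphism (it is defined everywhere on $C$, as $o\notin C$), surjectivity gives a point of $C$ mapping to $p$. That point lies in $\pi^{-1}(p)\subseteq L$.

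Third, we show $|C\cap L|\le 1$. The curves $C$ and $L$ are a conic and a line, so if they met in two points, $L$ would lie in the plane of $C$. That plane would then contain $o$, and we could project through it: the plane section of $\S^3$ would be $C$, which does not contain $o$, a contradiction.

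By conjugation, $|C\cap\oL|=1$ as well. The argument is identical for $D'$ with the same $L,\oL$, which completes the plan.
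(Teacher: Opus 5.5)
Your proposal is correct and follows essentially the paper's route. Both arguments show that $\bP(D)$ and $\bP(D')$ meet $\pi(\cU)$ in the same pair of conjugate points at infinity, take $L,\oL$ to be the lines of the cone $\cU$ over these points, and use that $C$ is a conic avoiding the center $(1:0:0:0:1)$.

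The differences are confined to sub-steps:
\begin{itemize}
\item You get that $C$ is a conic by writing it as the plane section $\bS(S_c)\cap\bS(H)$. This is a standard fact that you assert rather than verify, though it follows from the explicit formula for $\pi^{-1}$. The paper instead computes $\deg C=4-2$ via the quadrics $Q_\alpha$.
\item You derive $|C\cap L|=1$ from surjectivity of $\pi|_C$ plus a coplanarity argument. The paper instead uses B\'ezout for the hyperplane section $\cU$.
\end{itemize}
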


\begin{proof}
Let $S_u\subset\R^3$ denote the sphere with radius~$|u_0|$ and center $(u_1,u_2,u_3)$.
Let $H,H'\subset\R^3$ be the planes and $c,c'\in\R^4$ be the coefficients
\st $D=S_c\cap H$ and $D'=S_{c'}\cap H'$.
Notice that $\pi(\cU)=\set{y\in\P^3}{y_0=y_1^2+y_2^2+y_3^2=0}$ and
\[
\bP(S_u)=\set{y\in\P^3}{-(u_0\,y_0)^2+(y_1-u_1\,y_0)^2+(y_2-u_2\,y_0)^2+(y_3-u_3\,y_0)^2=0}.
\]
Since $\bP(D)=\bP(S_c)\cap\bP(H)$,
it follows from B\'ezout's theorem that~$\bP(D)\cap\pi(\cU)$ consist
of two complex conjugate points.
Therefore, $\pi(\cU)\cap\bP(D)=\{\pi(L),\pi(\oL)\}$
for some complex conjugate lines~$L,\oL\subset\cU$.
Indeed, the complex points in~$\pi(\cU)$ are via the stereographic projection~$\pi$ preimages of complex lines in~$\cU$.
As $H$ and $H'$ are parallel by assumption, we deduce that $\bP(H)\cap\bP(H')$
coincides with the line at infinity spanned by the complex conjugate points~$\pi(L),\pi(\oL)\in\pi(\cU)$.
This implies that
\[
\bP(D)\cap\bP(D')\cap\pi(\cU)=\{\pi(L),\pi(\oL)\}.
\]
Suppose that $B\subset\P^3$ is an irreducible conic
\st $\bR(B)\in\{D,D'\}$.
By definition, $C$ is the Zariski closure of~$\pi^{-1}(B)$.
We first want to show that $\deg C=2$
and thus $|C\cap H|=2$
for a general hyperplane $H\subset \P^4$ (see \citep[\textsection I.7]{1977}).
A straightforward computation shows that
\begin{gather*}
\pi^{-1}(y)=
(
\Delta+y_0^2:
2\,y_0\,y_1:
2\,y_0\,y_2:
2\,y_0\,y_3:
\Delta-y_0^2
)
\quad\text{with}\quad
\Delta:=y_1^2+y_2^2+y_3^2.
\end{gather*}
It follows that the Zariski closure of~$\pi(H\cap\S^3)$
is for some general $\alpha\in\P^4$ equal to the quadratic surface
\[
Q_\alpha:=\set{y\in \P^3}{
\alpha_0\,(\Delta+y_0^2)+
\alpha_1\,y_0\,y_1+
\alpha_2\,y_0\,y_2+
\alpha_3\,y_0\,y_3+
\alpha_4\,(\Delta-y_0^2)=0}.
\]
Since $\pi(\cU)=\set{y\in\P^3}{y_0=\Delta=0}$, we find that
the inverse stereographic projection~$\pi^{-1}$ is not defined at
the conic~$\pi(\cU)$
and $\pi(\cU)\subset Q_\alpha$ for all $\alpha\in \P^4$.
As $|B\cap Q_\alpha|=(\deg B)\cdot(\deg Q_\alpha) =4$
for general~$\alpha\in\P^4$ by B\'ezout's theorem, we deduce that
\[
\deg C=|C\cap H|=|B\cap Q_\alpha|-|B\cap \pi(\cU)|=4-2=2.
\]
The stereographic projection~$\pi$ defines a biregular isomorphism~$C\cong B$ and
thus $C$ is an irreducible conic with real points so that $\bR(C)\subset \R^4$ is a circle.
Hence, $C\subset\S^3$ is a circle by definition.
As $\cU$ is a hyperplane section of $\S^3$,
it follows from B\'ezout's theorem that $|C\cap\cU|=2$.
The complex lines in $\cU$ are contracted by~$\pi$ to complex points in
the conic~$\pi(\cU)$ and thus $|C\cap L|=|C\cap\oL|=1$.
The projection center of~$\pi$ coincides with $L\cap\oL$ and $\deg B=2$,
which implies that $|C\cap L\cap\oL|=0$.
\end{proof}

\begin{proposition}
\label{prp:ABU}
If $\varphi\c\P^1\times\P^1\to X\subset \S^3$ is a biquadratic birational morphism
and there exist circles $A,B\subset\R^3$ \st $X=\bS(A+B)$,
then $\varphi$ is $\cU$-decomposable.
\end{proposition}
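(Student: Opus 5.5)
The plan is to mirror the proof of \Cref{prp:CDE}: exhibit four complex lines in $X\cap\cU$ and then conclude with \Cref{lem:LLRR} for $H=\cU$.

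First I produce the circles to which \Cref{lem:parLL} applies. For general $a\in A$ and $b\in B$, the translates $\{a\}+B$ and $A+\{b\}$ are circles contained in $A+B$. Fix general $a,a'\in A$. The circles $\{a\}+B$ and $\{a'\}+B$ are translates of each other, so their spanning planes are parallel. By \Cref{lem:parLL}, the curves $\bS(\{a\}+B)$ and $\bS(\{a'\}+B)$ are circles in $X$. Each meets a pair of complex conjugate lines $R,\oR\subset\cU$, and these lines do not depend on the circle: in the proof of \Cref{lem:parLL} they are determined by the points $\pi(R),\pi(\oR)$ where the line at infinity of the common plane direction meets the conic $\pi(\cU)$.

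Next I show that $R,\oR\subset X$. As $a$ ranges over a dense subset of $A$, the circles $\bS(\{a\}+B)$ sweep out $X$. Each of them passes through a point of $R$ distinct from the center $R\cap\oR$; that point is $\pi^{-1}$ of the direction in which the circle runs off toward $\pi(R)$. I would argue that these intersection points vary with $a$. If instead they were a single point $r\in R$, then all these circles would share the two points $r$ and $\bar r$. They are also translates in a fixed plane direction, so their preimages would all lie in the pencil of hyperplanes containing $r,\bar r$. This would force a contradiction with $\deg X=8$ via \Cref{lem:Y}. I expect this step to be the main obstacle.

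A cleaner alternative, which I would try first, is to compute directly from a parametrization. Parametrize $A+B$ by $a(x)+b(y)$ with denominators $a_0(x)b_0(y)$, exactly as in the proof of \Cref{prp:UAB}, and compose with $\pi^{-1}$. At a root $x=\alpha$ of $a_0$, the point $\pi^{-1}(f(\alpha,y))$ has the form $(\Delta:0:0:0:\Delta)$ plus lower order terms. After removing common factors, the fiber image should be a line through the center $(1:0:0:0:1)$ lying in $\cU$. Symmetrically, the circles $A+\{b\}$ give a second pair of complex conjugate lines $L,\oL\subset\cU\cap X$. This pair is different from $R,\oR$, because the planes of $A$ and $B$ are not parallel (otherwise $A+B$ would be planar and $\deg X=2$, contradicting \Cref{lem:Y}\ref{lem:Y:C}).

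Finally, $X\cap\cU$ contains the four distinct complex lines $L,\oL,R,\oR$. Applying \Cref{lem:LLRR} with $H=\cU$ shows that $\varphi$ is $\cU$-decomposable.
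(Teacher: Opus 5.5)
\textbf{Verdict: there is a gap.} Your overall frame matches the paper: apply \Cref{lem:parLL} to the parallel translates, obtain two conjugate pairs of lines in $\cU$, and conclude with \Cref{lem:LLRR}. But the one step with real content is not established, namely that $R,\oR$ are actually contained in $X$.

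You correctly note what happens if all circles $\bS(\{a\}+B)$ met $R$ in the same point $r$. They would then also share $\bar r$, and $r\neq\bar r$, because by \Cref{lem:parLL} the circles avoid $R\cap\oR$, which is the only real point of $\cU$. However, the proposed contradiction via a ``pencil of hyperplanes'' and $\deg X=8$ is not an argument. You do not say which assertion of \Cref{lem:Y} fails, and it is unclear how the degree would enter.

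The missing idea is to use that $\varphi$ is a birational morphism, so that the circles are images of pairwise disjoint fibers:
\begin{enumerate}
\item These circles cover $X$, so a general one is not contained in $\Sing X$.
\item By \RL{C}{a}, it is therefore a real left or right image of $\varphi$.
\item By \RL{C}{b}, two general such images meet in at most one point, so they cannot share both $r$ and $\bar r$.
\item Hence for general $a,a'$ the points where the circles meet $R$ differ. Infinitely many points of $R$ then lie in $X$, and $R\subset X$ because $X$ is closed.
\end{enumerate}
This is exactly the paper's argument. Equivalently: if infinitely many images of disjoint fibers passed through $r$, then $\varphi^{-1}(r)$ would contain a curve contracted to a point, contradicting \RL{Y}{D}.

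Your computational alternative is a genuinely different route and can be completed, but as written (``should be a line'') its content is only asserted. Note that $f(\alpha,y)$ itself is the constant point $(0:a_1(\alpha):a_2(\alpha):a_3(\alpha))$ on the absolute conic, where $\pi^{-1}$ is undefined, so you must cancel common factors before restricting to $x=\alpha$. The steps needed are:
\begin{enumerate}
\item Each circle meets the plane at infinity only in its circular points. Hence $a_0\mid a_1^2+a_2^2+a_3^2=:a_0c_a$, and likewise for $b$.
\item Consequently all five components of $\pi^{-1}\circ f$ are divisible by $a_0b_0$, and the reduced map restricted to $x=\alpha$ is
\[
\bigl(M : 2a_1(\alpha)\,b_0 : 2a_2(\alpha)\,b_0 : 2a_3(\alpha)\,b_0 : M\bigr),\qquad M:=c_a(\alpha)\,b_0+2\textstyle\sum_{i=1}^{3}a_i(\alpha)\,b_i .
\]
This parametrizes the ruling of $\cU$ through $(1:0:0:0:1)$ and $(0:a_1(\alpha):a_2(\alpha):a_3(\alpha):0)$.
\item The map is non-constant. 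Otherwise $B$ would lie in a plane with the non-zero isotropic normal $(a_1(\alpha),a_2(\alpha),a_3(\alpha))$, which is impossible for a real circle.
\end{enumerate}

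Completed this way, your route needs neither \Cref{lem:C} nor $\varphi$ until the final appeal to \Cref{lem:LLRR}, which only requires four lines in $X\cap\cU$. Combined with your non-parallel-planes argument, it also proves that the two pairs of lines are distinct, a point the paper leaves implicit. The paper's route is shorter because it reuses \Cref{lem:C}.
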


\begin{proof}
Let $A_b$ be the circle $A+\{b\}$
and let $C_b:=\bS(A_b)$ for all $b\in B$.
We observe that $A_u$ and $A_v$ are parallel for all $u,v\in B$.
Therefore, it follows from \Cref{lem:parLL} that
there exist complex conjugate lines $L,\oL\subset \cU$
\st for all $b\in B$
the curve~$C_b$ is a circle \st
$|C_b\cap L|=|C_b\cap\oL|=1$ and
$|C_b\cap L\cap\oL|=0$.
Notice that $L\cap\oL=\cU_\R$
and thus if $C_u\cap C_v$ contains a complex point in~$\cU$,
then $|C_u\cap C_v|=2$ as its complex conjugate must also be contained.
We know from Lemmas~\ref{lem:C}\ref{lem:C:a} and \ref{lem:C}\ref{lem:C:b}
that $|C_u\cap C_v|\leq 1$, which shows that $|C_u\cap C_v\cap\cU|=0$
for almost all $u,v\in B$.
In other words,
as we vary~$b\in B$ the circle~$C_b$ reaches
infinitely many complex points in~$L\cup\oL$.
This implies that $L,\oL\subset X\cap\cU$.
For the circles $\{a\}+B$ with $a\in A$,
we obtain analoguously another pair of complex conjugate lines $R,\oR\subset X\cap\cU$.
We deduce that $X\cap\cU$ contains four complex lines
and thus is $\cU$-decomposable by \Cref{lem:LLRR}.
\end{proof}

\begin{proposition}
\label{prp:UE}
If $\varphi\c\P^1\times\P^1\to X\subset \S^3$ is a $\cU$-decomposable biquadratic birational morphism,
then
$\nu\circ\varphi$ is not $\cE$-decomposable for all $\nu\in\aut\S^3$.
\end{proposition}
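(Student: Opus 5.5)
We argue by contradiction. Suppose that $\varphi$ is $\cU$-decomposable and that $\psi:=\nu\circ\varphi$ is $\cE$-decomposable for some $\nu\in\aut\S^3$. Then $\psi\c\P^1\times\P^1\to\nu(X)$ is again a biquadratic birational morphism. By \Cref{lem:LLRR}, or directly by the definition of decomposability combined with \Cref{lem:nopoint}, both the lines in $X\cap\cU$ and those in $\nu(X)\cap\cE$ are complex double lines, and each is a left or right image.

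The key observation concerns the parametrization. In the $\cU$-decomposable case, the lines $L,\oL$ (left images) and $R,\oR$ (right images) all pass through the single real point $(1:0:0:0:1)$, the vertex of the cone $\cU$. In the $\cE$-decomposable case, $\cE$ is a smooth quadric without real points. Its four lines satisfy $L'\cap\oL'=\varnothing$ and $R'\cap\oR'=\varnothing$, since conjugate lines on $\cE$ lie in the same ruling, as in \RL{N}{a} with left and right generators.

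We then compare the left and right fibers in $\P^1\times\P^1$. For $\psi$, let the left fibers $A',\oA'$ and right fibers $B',\oB'$ map to the lines in $\cE$. For $\varphi$, let $A,\oA,B,\oB$ be the fibers mapping to the lines in $\cU$. Since $\nu$ is an isomorphism, the double-line locus $\Sing$ of $X$ is identified with that of $\nu(X)$, and both decompositions consist of double lines. The plan is to show that the set of double lines of $X$ that are left or right images is determined intrinsically, and that there are exactly four of them. We estimate via degree: $\deg X=8$ by \RL{Y}{C}. Each double line is the image of a fiber, because by \RL{Y}{E} a $2{:}1$ image of a fiber of bidegree $(1,0)$ or $(0,1)$ has degree $1$. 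A hyperplane section of bidegree $(2,2)$ can contain at most two left fibers and two right fibers.

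The main obstacle is ruling out that $\nu(L\cup\oL\cup R\cup\oR)$ and $L'\cup\oL'\cup R'\cup\oR'$ are different quadruples of double lines. To handle this, we would consider the real points. In the $\cU$ case, the four lines meet in the real point $p=(1:0:0:0:1)$. The left fibers $A,\oA$ are the preimages of $L,\oL$, and every point of $A\cap\oA$ would map to $p$; since $A\cap\oA=\varnothing$ by \RL{Y}{A}, the point $p$ has several preimages, namely $A\cap B$, $A\cap\oB$, and so on. We would then argue that if $\nu(X)$ also contains the four $\cE$-lines as double lines, then $X$ contains at least six or eight distinct double lines arising as fiber images. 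By B\'ezout, intersecting $X$ with the real hyperplane spanned by suitable conjugate pairs would produce a degree count exceeding $8$. Concretely, the lines $\nu^{-1}(L')$ and $\nu^{-1}(\oL')$ are skew and conjugate, so they cannot coincide with $\{L,\oL\}$, which meet. Hence $\nu^{-1}(L')$ is a new double line, coming from an additional left or right fiber, and it would have to meet $R$ or $L$ in a way that contradicts the intersection numbers $|F\cap F'|$ from \RL{Y}{A} together with \RL{C}{b}. Indeed, a new left fiber is disjoint from $A$, yet its image would have to meet $L$, because both lie in the hyperplane section $\nu^{-1}(\cE)\cap X$. This forces extra preimage curves mapping onto $L$ and contradicts the finiteness in \RL{C}{b}. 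Carrying out this incidence bookkeeping carefully is the step I expect to be hardest.
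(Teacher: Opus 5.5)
Your reduction is correct up to the point where you show that $M:=\nu^{-1}(L')$ is a left image $\varphi(A')$ of $\varphi$ distinct from $L,\oL,R,\oR$ (skew conjugate lines versus lines through the vertex). The step meant to produce the contradiction fails, for three reasons.

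\begin{enumerate}
\item $L$ does not lie in $\nu^{-1}(\cE)\cap X$. It contains the real vertex $(1:0:0:0:1)$ of $\cU$, whereas $\nu^{-1}(\cE)$ has no real points. So nothing forces $M\cap L\neq\varnothing$.
\item Even if $M$ met $L$, nothing would be contradicted. Images of disjoint left fibers can meet: $L\cap\oL$ is the vertex although $A\cap\oA=\varnothing$. A common point only gives an extra preimage \emph{point}, not extra preimage curves. Also, \RL{C}{b} only gives $C_\alpha\cap C_\beta=\varnothing$ for $\beta$ outside a finite exceptional set, and that set may contain the parameter of the special fiber $A'$.
\item The B\'ezout count is not available. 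The eight double lines are spread over two different hyperplane sections, each of degree exactly $8$, and you exhibit no hyperplane containing more than four of them.
\end{enumerate}

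So the ``incidence bookkeeping'' you defer is the entire proof, and the mechanism you sketch for it does not work.

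For comparison, the paper projects stereographically from the vertex. The four double lines in $\nu^{-1}(\cE)$ avoid $\cU$, so $\Sing\pi(X)$ contains four lines. The vertex has the four preimages $A\cap B$, $A\cap\oB$, $\oA\cap B$, $\oA\cap\oB$, hence multiplicity at least $4$, so $\deg\pi(X)\leq 4$. A general plane section of $\pi(X)$ is then an irreducible plane curve of degree at most $4$ with at least $4$ singular points, which contradicts the genus formula.

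Your incidence idea can be repaired by using \emph{left--right} incidences instead. These are forced, because $\varphi$ is a morphism and every left fiber meets every right fiber.
\begin{enumerate}
\item Let $N:=\nu^{-1}(R')=\varphi(B')$ and $\overline{N}:=\nu^{-1}(\oR')=\varphi(B'')$ for right fibers $B',B''$. Let $\Pi$ be the real hyperplane with $\Pi\cap\S^3=\nu^{-1}(\cE)$.
\item The points $\varphi(A\cap B')\in L\cap N$ and $\varphi(A\cap B'')\in L\cap\overline{N}$ both lie in $L\cap\Pi$. This is a single point, since $L\not\subset\Pi$ (it contains the real vertex).
\item Hence $N\cap\overline{N}\neq\varnothing$. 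Since $N\neq\overline{N}$ (a conjugation-invariant line has real points), this intersection is a conjugation-invariant point, i.e.\ a real point of $\nu^{-1}(\cE)$. That is impossible.
\end{enumerate}
This would be a valid and more elementary alternative to the paper's projection-and-genus argument, but it is not the argument your proposal gives.
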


\begin{proof}
Suppose by contradiction that $\varphi$ is $\cE$-decomposable
for some $\nu\in\aut\S^3$.
It follows from \Cref{lem:LLRR} that
$\nu(\cE)\cap\Sing X$ consists of four complex double lines.
Since $\nu(\cE)_\R=\varnothing$ and each line in the quadratic cone~$\cU$ contains~$\cU_\R$,
we deduce that none of the complex lines in~$\nu(\cE)\cap\Sing X$ are contained in~$\cU$.
The stereographic projection~$\pi$ is an isomorphism outside~$\cU$
and thus $\Sing\pi(X)$ contains at least four complex line components.
Now suppose that $L,F\in \R[x_0,x_1,x_2,x_3]$ are homogeneous polynomials \st
$H=\set{x\in \P^3}{L(x)=0}$ is a general hyperplane
and
$\pi(X)=\set{x\in \P^3}{F(x)=0}$.
Let $C:=\pi(X)\cap H$ be a general hyperplane section.
It follows from Bertini's theorem that $C$ is an irreducible curve
(see \citep[Corollary~III.10.9, Remark~10.9.1]{1977} and \citep[Theorem~2.16]{2007}).
Notice that $\Sing C$ is the set of complex points
where the following Jacobian matrix has lower rank:
\[
\left(
\begin{smallmatrix}
\partial{x_0}F & \partial{x_1}F & \partial{x_2}F & \partial{x_3}F\\
\partial{x_0}L & \partial{x_1}L & \partial{x_2}L & \partial{x_3}L
\end{smallmatrix}
\right).
\]
From this, we deduce that $\Sing C=H\cap \Sing\pi(X)$
and thus $|\Sing C|\geq 4$ by B\'ezout's theorem.
It follows from the genus formula for planar curves (see \citep[Exercises~I.7.2b and IV.1.8a]{1977}) that
\[
\tfrac{1}{2}\cdot(\deg\pi(X)-1)\cdot(\deg\pi(X)-2) - |\Sing C|\geq 0.
\]
Hence, we established that $\deg\pi(X)\geq 5$.
Since $\varphi$ is $\cU$-decomposable, $\cU\cap X$
consists of two left images and two right images.
There are 4 complex intersections
between the corresponding two left fibers and two right fibers in~$\P^1\times\P^1$.
The
left images and right images are complex lines in the quadratic cone~$\cU$
that pass through $p\in\cU_\R$, namely the center of stereographic projection~$\pi$.
Since $p$ has 4 complex points as preimage,
it must be of multiplicity at least~4 and thus $\deg \pi(X)\leq 8-4=4$.
We arrived at a contradiction as we established that $\deg\pi(X)\geq 5$.
This concludes the proof.
\end{proof}

\begin{proposition}
\label{prp:ABCD}
Suppose that $\varphi\c\P^1\times\P^1\to X\subset \S^3$ is a biquadratic birational morphism.
If there exist circles $A,B\subset\R^3$ \st $X=\bS(A+B)$,
then $\nu(X)\neq\bP(C\cdot D)$ for all circles $C,D\subset S^3$ and $\nu\in\aut\S^3$.
\end{proposition}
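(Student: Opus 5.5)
We argue by contradiction, chaining the three propositions just established. Suppose $X=\bS(A+B)$ for circles $A,B\subset\R^3$ and $\nu(X)=\bP(C\cdot D)$ for circles $C,D\subset S^3$ and some $\nu\in\aut\S^3$. By \Cref{prp:ABU}, the biquadratic birational morphism $\varphi$ is $\cU$-decomposable. Next, $\nu\circ\varphi\c\P^1\times\P^1\to\nu(X)$ is again a biquadratic birational morphism. Indeed, $\nu$ is a linear automorphism of $\P^4$ preserving $\S^3$, so composing with it keeps the components of bidegree $(2,2)$, keeps the map everywhere defined, and keeps it birational onto its image. Since $\nu(X)=\bP(C\cdot D)$, \Cref{prp:CDE} applied to $\nu\circ\varphi$ shows that $\nu\circ\varphi$ is $\cE$-decomposable. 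This contradicts \Cref{prp:UE}, which states that for a $\cU$-decomposable biquadratic birational morphism $\varphi$, no composition $\nu\circ\varphi$ with $\nu\in\aut\S^3$ is $\cE$-decomposable.

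The only point requiring care is the hypothesis check in the middle step. \Cref{prp:CDE} is stated for a map onto $X=\bP(C\cdot D)$, so it must be applied to $\nu\circ\varphi$, whose target is $\nu(X)$, and not to $\varphi$ itself. Its hypotheses are that the map is biquadratic, birational, and a morphism, and we check them as follows.

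\begin{itemize}
\item The components of $\nu\circ\varphi$ are linear combinations of those of $\varphi$, hence still of bidegree $(2,2)$.
\item The components have no common zero, because $\nu$ is invertible. So $\nu\circ\varphi$ is still a morphism.
\item Birationality transfers directly, because $\nu$ is an isomorphism.
\end{itemize}

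Also, \Cref{prp:UE} is phrased with $\nu\circ\varphi$ for an arbitrary $\nu\in\aut\S^3$, which is exactly the form we obtain. No further obstacle is expected, since all the geometric content (the double lines in $\cU$ versus $\cE$ and the degree bound on $\pi(X)$) is already contained in \Cref{prp:UE}.
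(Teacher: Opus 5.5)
Your proposal is correct and is essentially the paper's proof: \Cref{prp:ABU} makes $\varphi$ $\cU$-decomposable, \Cref{prp:CDE} applied to $\nu\circ\varphi$ makes it $\cE$-decomposable, and this contradicts \Cref{prp:UE}. Your check that $\nu\circ\varphi$ is still a biquadratic birational morphism onto $\nu(X)$ is the one detail the paper leaves implicit, and you handle it correctly.
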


\begin{proof}
Follows from Propositions~\ref{prp:CDE}, \ref{prp:ABU} and \ref{prp:UE}.
\end{proof}

\begin{remark}
Suppose that $\varphi\c\P^1\times\P^1\to X\subset \S^3$ is a biquadratic birational morphism.
It follows from Propositions~\ref{prp:ECD} and~\ref{prp:CDE}
that $\varphi$ is $\cE$-decomposable if and only if
there exist circles $C,D\subset S^3$ \st $X=\bP(C\cdot D)$.
It follows from Propositions~\ref{prp:UAB} and~\ref{prp:ABU}
that $\varphi$ is $\cU$-decomposable if and only if
there exist circles $A,B\subset\R^3$ \st $X=\bS(A+B)$.
\END
\end{remark}

\section*{Acknowledgements}
\addcontentsline{toc}{section}{Acknowledgements}

We thank Mikhail Skopenkov and the anonymous referees for their insightful comments,
numerous corrections, and helpful suggestions for improvement.
This work was supported by the Austrian Science Fund (FWF)
projects~P33003 and~P36689.

\bibliography{double-line}

\begin{thebibliography}{10}

\bibitem{2014rin}
B.~Bastl, B.~J{\"u}ttler, M.~L{\'a}vi{\v{c}}ka, T.~Schulz, and
  Z.~{\v{S}}{\'{\i}}r.
\newblock On the parameterization of rational ringed surfaces and rational
  canal surfaces.
\newblock {\em Math. Comput. Sci.}, 8(2):299--319, 2014.
\newblock \href {https://doi.org/10.1007/s11786-014-0192-y}
  {\path{doi:10.1007/s11786-014-0192-y}}.

\bibitem{2010ber}
M.~Berger.
\newblock {\em Geometry revealed. {A} {Jacob}'s ladder to modern higher
  geometry}.
\newblock Springer, 2010.
\newblock \href {https://doi.org/10.1007/978-3-540-70997-8}
  {\path{doi:10.1007/978-3-540-70997-8}}.

\bibitem{1980}
R.~Blum.
\newblock Circles on surfaces in the {Euclidean} space.
\newblock Geometry and differential geometry, {Proc}. {Conf}., {Haifa} 1979,
  {Lect}. {Notes} {Math}. 792, 213-221, 1980.

\bibitem{2011arc}
P.~Bo, H.~Pottmann, M.~Kilian, W.~Wang, and J.~Wallner.
\newblock Circular arc structures.
\newblock {\em ACM Trans. Graph.}, 30:101, 07 2011.
\newblock \href {https://doi.org/10.1145/2010324.1964996}
  {\path{doi:10.1145/2010324.1964996}}.

\bibitem{2012ort}
A.I. Bobenko and E.~Huhnen-Venedey.
\newblock Curvature line parametrized surfaces and orthogonal coordinate
  systems: discretization with {Dupin} cyclides.
\newblock {\em Geom. Dedicata}, 159:207--237, 2012.
\newblock \href {https://doi.org/10.1007/s10711-011-9653-5}
  {\path{doi:10.1007/s10711-011-9653-5}}.

\bibitem{2016new}
C.C.-A. Cheng and T.~Sakkalis.
\newblock On new types of rational rotation-minimizing frame space curves.
\newblock {\em J. Symb. Comput.}, 74:400--407, 2016.
\newblock \href {https://doi.org/10.1016/j.jsc.2015.08.005}
  {\path{doi:10.1016/j.jsc.2015.08.005}}.

\bibitem{1998}
H.S.M. Coxeter.
\newblock {\em Non-{E}uclidean geometry}.
\newblock Spectrum. MAA, sixth edition, 1998.

\bibitem{2012dol}
I.V. Dolgachev.
\newblock {\em Classical algebraic geometry. {A} modern view}.
\newblock Cambridge University Press, 2012.
\newblock \href {https://doi.org/10.1017/CBO9781139084437}
  {\path{doi:10.1017/CBO9781139084437}}.

\bibitem{2022abc}
T.~Duarte~Guerreiro, Z.~Li, and J.~Schicho.
\newblock Classification of higher mobility closed-loop linkages.
\newblock {\em Ann. Mat. Pura Appl.}, 2022.
\newblock \href {https://doi.org/10.1007/s10231-022-01258-y}
  {\path{doi:10.1007/s10231-022-01258-y}}.

\bibitem{2008}
G.~Gentili and C.~Stoppato.
\newblock Zeros of regular functions and polynomials of a quaternionic
  variable.
\newblock {\em Mich. Math. J.}, 56(3):655--667, 2008.
\newblock \href {https://doi.org/10.1307/mmj/1231770366}
  {\path{doi:10.1307/mmj/1231770366}}.

\bibitem{2021slice}
G.~Gentili, C.~Stoppato, and T.~Trinci.
\newblock Zeros of slice functions and polynomials over dual quaternions.
\newblock {\em Trans. Am. Math. Soc.}, 374(8):5509--5544, 2021.
\newblock \href {https://doi.org/10.1090/tran/8346}
  {\path{doi:10.1090/tran/8346}}.

\bibitem{1966}
B.~Gordon and T.S. Motzkin.
\newblock On the zeros of polynomials over division rings.
\newblock {\em Trans. Am. Math. Soc.}, 116:218--226, 1966.
\newblock \href {https://doi.org/10.2307/1994114} {\path{doi:10.2307/1994114}}.

\bibitem{1977}
R.~Hartshorne.
\newblock {\em Algebraic geometry}.
\newblock Springer, 1977.
\newblock Graduate Texts in Mathematics, No. 52.

\bibitem{1995}
T.~Ivey.
\newblock Surfaces with orthogonal families of circles.
\newblock {\em Proc. Am. Math. Soc.}, 123(3):865--872, 1995.
\newblock \href {https://doi.org/10.2307/2160812} {\path{doi:10.2307/2160812}}.

\bibitem{1968}
F.~Klein.
\newblock Vorlesungen {\"u}ber nicht-{E}uklidische {Geometrie}.
\newblock Springer, 1968.

\bibitem{2007}
J.~Koll{\'a}r.
\newblock {\em Lectures on resolution of singularities}, volume 166 of {\em
  Ann. Math. Stud.}
\newblock Princeton University Press, 2007.
\newblock \href {https://doi.org/10.1515/9781400827800}
  {\path{doi:10.1515/9781400827800}}.

\bibitem{2018kol}
J.~Koll{\'a}r.
\newblock Quadratic solutions of quadratic forms.
\newblock In {\em Local and global methods in algebraic geometry}, pages
  211--249. American Mathematical Society, 2018.
\newblock \href {https://doi.org/10.1090/conm/712/14348}
  {\path{doi:10.1090/conm/712/14348}}.

\bibitem{2014bez}
R.~Krasauskas and S.~Zub{\.e}.
\newblock Rational {B{\'e}zier} formulas with quaternion and {Clifford} algebra
  weights.
\newblock In {\em SAGA}, pages 147--166. Springer, 2014.
\newblock \href {https://doi.org/10.1007/978-3-319-08635-4_8}
  {\path{doi:10.1007/978-3-319-08635-4_8}}.

\bibitem{2020kin}
R.~Krasauskas and S.~Zub{\.e}.
\newblock Kinematic interpretation of {Darboux} cyclides.
\newblock {\em Comput. Aided Geom. Des.}, 83:16, 2020.
\newblock \href {https://doi.org/10.1016/j.cagd.2020.101945}
  {\path{doi:10.1016/j.cagd.2020.101945}}.

\bibitem{2022n1}
J.~Lercher, D.F. Scharler, H.-P. Schröcker, and J.~Siegele.
\newblock Factorization of quaternionic polynomials of bi-degree $(n,1)$.
\newblock {\em Beitr. Algebra Geom.}, 2022.
\newblock \href {https://doi.org/10.1007/s13366-022-00629-z}
  {\path{doi:10.1007/s13366-022-00629-z}}.

\bibitem{2022tech}
J.~Lercher and H.-P. Schr{\"o}cker.
\newblock A multiplication technique for the factorization of bivariate
  quaternionic polynomials.
\newblock {\em Adv. Appl. Clifford Algebr.}, 32(1):23, 2022.
\newblock \href {https://doi.org/10.1007/s00006-021-01194-9}
  {\path{doi:10.1007/s00006-021-01194-9}}.

\bibitem{2018kempe}
Z.~Li, J.~Schicho, and H.-P. Schr{\"o}cker.
\newblock Kempe's universality theorem for rational space curves.
\newblock {\em Found. Comput. Math.}, 18(2):509--536, 2018.
\newblock \href {https://doi.org/10.1007/s10208-017-9348-x}
  {\path{doi:10.1007/s10208-017-9348-x}}.

\bibitem{2019}
Z.~Li, J.~Schicho, and H.-P. Schr{\"o}cker.
\newblock Factorization of motion polynomials.
\newblock {\em J. Symb. Comput.}, 92:190--202, 2019.
\newblock \href {https://doi.org/10.1016/j.jsc.2018.02.005}
  {\path{doi:10.1016/j.jsc.2018.02.005}}.

\bibitem{2024}
Z.~Li, H.-P. Schr\"ocker, M.~Skopenkov, and D.F. Scharler.
\newblock {Motion Polynomials Admitting a Factorization with Linear Factors}.
\newblock 2024.
\newblock \href{https://arxiv.org/abs/2209.02306}{arXiv:2209.02306}.

\bibitem{2021circ}
N.~Lubbes.
\newblock Surfaces that are covered by two pencils of circles.
\newblock {\em Math. Z.}, 299(3-4):1445--1472, 2021.
\newblock \href {https://doi.org/10.1007/s00209-021-02713-x}
  {\path{doi:10.1007/s00209-021-02713-x}}.

\bibitem{2023trans}
N.~Lubbes.
\newblock Translational and great {D}arboux cyclides.
\newblock {\em C. R. Math. Acad. Sci. Paris}, 362:413--448, 2024.
\newblock \href {https://doi.org/10.5802/crmath.603}
  {\path{doi:10.5802/crmath.603}}.

\bibitem{2018kin}
N.~Lubbes and J.~Schicho.
\newblock Kinematic generation of {Darboux} cyclides.
\newblock {\em Comput. Aided Geom. Des.}, 64:11--14, 2018.
\newblock \href {https://doi.org/10.1016/j.cagd.2018.06.001}
  {\path{doi:10.1016/j.cagd.2018.06.001}}.

\bibitem{2019boh}
P.C. López-Custodio and J.S. Dai.
\newblock {Design of a Variable-Mobility Linkage Using the Bohemian Dome}.
\newblock {\em Journal of Mechanical Design}, 141(9), 04 2019.
\newblock \href {https://doi.org/10.1115/1.4042845}
  {\path{doi:10.1115/1.4042845}}.

\bibitem{2012web}
H.~{Pottmann}, L.~{Shi}, and M.~{Skopenkov}.
\newblock {Darboux cyclides and webs from circles}.
\newblock {\em {Comput. Aided Geom. Des.}}, 29(1):77--97, 2012.
\newblock \href {https://doi.org/10.1016/j.cagd.2011.10.002}
  {\path{doi:10.1016/j.cagd.2011.10.002}}.

\bibitem{2014rt}
D.~Roegel.
\newblock {The ''Villarceau circles'' in Uhlberger's staircase (ca. 1580)}.
\newblock Research report, 2014.
\newblock URL: \url{https://inria.hal.science/hal-00941465}.

\bibitem{2001}
J.~Schicho.
\newblock The multiple conical surfaces.
\newblock {\em Beitr. Algebra Geom.}, 42(1):71--87, 2001.

\bibitem{2020}
J.~Siegele, D.F. Scharler, and H.-P. Schr{\"o}cker.
\newblock Rational motions with generic trajectories of low degree.
\newblock {\em Comput. Aided Geom. Des.}, 76:10, 2020.
\newblock \href {https://doi.org/10.1016/j.cagd.2019.101793}
  {\path{doi:10.1016/j.cagd.2019.101793}}.

\bibitem{2018sko}
M.~Skopenkov and R.~Krasauskas.
\newblock Surfaces containing two circles through each point.
\newblock {\em Math. Ann.}, 373(3-4):1299--1327, 2019.
\newblock \href {https://doi.org/10.1007/s00208-018-1739-z}
  {\path{doi:10.1007/s00208-018-1739-z}}.

\bibitem{1987}
N.~Takeuchi.
\newblock A closed surface of genus one in {{\(E^ 3\)}} cannot contain seven
  circles through each point.
\newblock {\em Proc. Am. Math. Soc.}, 100:145--147, 1987.
\newblock \href {https://doi.org/10.2307/2046136} {\path{doi:10.2307/2046136}}.

\bibitem{2000}
N.~Takeuchi.
\newblock Cyclides.
\newblock {\em Hokkaido Math. J.}, 29(1):119--148, 2000.
\newblock \href {https://doi.org/10.14492/hokmj/1350912960}
  {\path{doi:10.14492/hokmj/1350912960}}.

\bibitem{2019mor}
M.~Zhao, X.~Jia, C.~Tu, B.~Mourrain, and W.~Wang.
\newblock Enumerating the morphologies of non-degenerate {Darboux} cyclides.
\newblock {\em Comput. Aided Geom. Des.}, 75:15, 2019.
\newblock \href {https://doi.org/10.1016/j.cagd.2019.101776}
  {\path{doi:10.1016/j.cagd.2019.101776}}.

\end{thebibliography}

Johanna Frischauf
\\Department of Basic Sciences in Engineering Sciences, University of Innsbruck,
\\Technikerstr.~13, 6020 Innsbruck, Austria
\\\texttt{johanna.frischauf@uibk.ac.at}
\\[2mm]Niels Lubbes
\\Institute for Algebra, Johannes Kepler University, Linz, Austria
\\\texttt{info@nielslubbes.com}
\\[2mm]Hans-Peter Schröcker
\\Department of Basic Sciences in Engineering Sciences, University of Innsbruck,
\\Technikerstr.~13, 6020 Innsbruck, Austria
\\\texttt{hans-peter.schroecker@uibk.ac.at}

\end{document}